\documentclass[oneside]{amsart}
\usepackage{enumerate,amssymb}
\usepackage{mathrsfs}
\usepackage[scr=esstix,cal=boondox]{mathalfa}
\usepackage{mathtools}
\newtheorem{thm}{Theorem}[section]
\newtheorem{coro}[thm]{Corollary}
\newtheorem{prop}[thm]{Proposition}

\newtheorem{lem}[thm]{Lemma}
\theoremstyle{definition}
\newtheorem{defn}[thm]{Definition}
\newtheorem{ex}[thm]{Example}
\newtheorem{exs}[thm]{Examples}
\newtheorem{question}[thm]{Question}
\newtheorem*{qestion}{Question}
\newtheorem{questions}[thm]{Questions}

\newcommand{\Rset}{\mathbb{R}}
\newcommand{\Nset}{\omega}
\newcommand{\Pset}{\omega^\omega}
\newcommand{\Cset}{2^\omega}
\newcommand{\nset}{\Nset_+}
\newcommand{\abs}[1]{\lvert#1\rvert}
\newcommand{\Abs}[1]{\lVert#1\rVert}
\newcommand{\seq}[1]{\langle#1\rangle}
\newcommand{\hm}{\mathscr H}
\newcommand{\eps}{\varepsilon}
\newcommand{\del}{\delta}
\newcommand{\subs}{\subseteq}
\renewcommand{\leq}{\leqslant}
\renewcommand{\geq}{\geqslant}

\DeclareMathOperator{\hdim}{\dim_{\mathsf{H}}}
\DeclareMathOperator{\diam}{diam}
\DeclareMathOperator{\HD}{\mathsf{HD}}
\DeclareMathOperator{\hd}{\mathsf{hd}}
\DeclareMathOperator{\hausm}{d_\mathsf{H}}

\newenvironment{enum}{\begin{enumerate}[\rm(i)]}{\end{enumerate}}
\newenvironment{itemyze}%
  {\begin{list}{\textbullet}{\labelwidth1ex\setlength{\leftmargin}{2.3em}}}%
  {\end{list}}
\newcommand{\si}{$\sigma$\nobreakdash-}
\newcommand{\VV}{\mathcal V}
\newcommand{\MM}{\mathcal{M}}
\newcommand{\DD}{\mathcal{D}}
\newcommand{\DDD}{\mathcal{D}^{\mathsf{H}}}
\newcommand{\DDs}{\overline{\mathcal{D}}}
\newcommand{\NN}{\mathcal{N}}
\newcommand{\NNs}{\mathcal{N}_\sigma}
\newcommand{\suma}{\mathcal{I}_{1\mkern-3mu/\mkern-3mu n}}
\newcommand{\clos}[1]{\overline{#1}}
\newcommand{\closs}{\clos s}
\newcommand{\el}[1]{\ell^{#1}}
\newcommand{\mult}[1]{{\mkern-2mu\times\mkern-2mu #1}}
\newcommand{\shift}[1]{{\mkern-2mu+\mkern-2mu #1}}

\newcommand{\wh}{\widehat}
\newcommand{\whs}{\widehat s}
\newcommand{\SEQ}{\mathbb{S}}
\newcommand{\Cube}{\mathbb{C}}
\newcommand{\cube}{\mathsf{C}}
\newcommand{\emany}{\exists^\infty}
\newcommand{\fmany}{\forall^\infty}
\newcommand{\rest}{{\restriction}}
\newcommand{\concat}{^{\mkern-1mu\smallfrown}\mkern-3mu}

\newcommand{\CS}{\mathscr{C\mkern-2mu S}}
\newcommand{\mc}{\mathcal}

\newcommand{\prece}{\preccurlyeq}

\newcommand{\harm}{\mathfrak{h}}
\newcommand{\geom}{\mathfrak{g}}
\newcommand{\micr}{\boldsymbol{\varsigma}}

\DeclareMathOperator{\micro}{\boldsymbol{\mathsf{micro}}}

\newcommand{\para}{\equiv}
\newcommand{\parti}{\mathbb{P}}
\newcommand{\lom}{\mkern-2mu/\mkern-2mu}

\begin{document}
\title
[Dominated sets, microscopic sets and Hausdorff measures]
{Dominated sets, microscopic sets and Hausdorff measures}
\author{Ond\v rej Zindulka, Piotr Nowakowski}
\address
{Ond\v rej Zindulka\\
Department of Mathematics\\
Faculty of Civil Engineering\\
Czech Technical University\\
Th\'akurova 7\\
160 00 Prague 6\\
Czech Republic\\
ORCID: 0000-0001-9002-3676}
\email{ondrej.zindulka@cvut.cz}
\urladdr{http://mat.fsv.cvut.cz/zindulka}
\address{Piotr Nowakowski\\
Faculty of Mathematics and Computer Science, University of Lodz, ul. Banacha 22, 90-238 \L \'{o}d\'{z}, Poland\\
ORCID: 0000-0002-3655-4991}
\email{piotr.nowakowski@wmii.uni.lodz.pl}

\subjclass[2020]{28A78, 28A75}
\keywords{dominated set, microscopic set, Hausdorff measure}
\thanks{%
}

\begin{abstract}
Let $S$ be a family of sequences of positive numbers that decrease to 0, let $X$ be a metric space and $A \subset X$. $A$ is said to be $S$-dominated if, for every $s\in S$, a countable cover $\{E_n\}$ of $E$ can be found such that $\diam E_n < s_n$ for all $n$. We examine the family of all $S$-dominated sets, denoted by $\DD(S)$. In particular, we examine the connections between $\DD(S)$ and families of sets with zero Hausdorff measure for some gauges. 
\end{abstract}

\maketitle

\section{Introduction}\label{sec:intro}

A set $M\subs\Rset$ is termed \emph{microscopic} if for every $\eps>0$ there is
a cover $\seq{E_1,E_2,E_3,\dots}$ of $E$ such that $\diam E_n<\eps^n$ for all $n$.
This definition comes from Appell's papers~\cite{MR1912017,MR2152488}
and papers~\cite{MR1909968,MR2290215} of Appell, D'Aniello and V\"ath, all except the latter dated 2001.
These papers that initiated the study of microscopic sets
have since been followed by at least 22 papers on the topic.
Basic properties of microscopic sets were established in~\cite{MR1909968,MR3204595}
and the notion was extended to Euclidean spaces in~\cite{MR3259051}.
It was shown, e.g., that
\begin{itemyze}
\item every strong measure zero set is microscopic,
\item every microscopic set has Hausdorff dimension zero,
\item and both inclusions are proper,
\item the family of all microscopic sets on $\Rset$ is a $G_\del$-based \si ideal
\item that is orthogonal to the ideal of meager sets, i.e., there is a microscopic comeager set $M\subs\Rset$,
\item sets contained in a microscopic $F_\sigma$-set also form a \si ideal.
\end{itemyze}
Papers~\cite{MR3823475,MR2543905} present a number of equivalent definitions of microscopic set. 
The duality of meager and microscopic sets has been studied in
\cite{MR3204595,MR2904079,MR2882548}, Fubini properties 
in~\cite{MR3529313,MR3869249} and typical properties in~\cite{MR2429522,MR4122477}. Microscopic Cantor sets are subject of study in~\cite{MR4036593,MR4324482}.

The first application of microscopic sets~\cite{MR2058531} appeared  
as soon as in 2003. Other applications occurred in~\cite{pospisil,MR4426649,MR3384702}.

Attempts to generalize the notion culminated in the Horbaczewska's paper~\cite{MR3759529}. She merged her ideas~\cite{MR3242549} with ideas of Karasi\'nska, Paszkiewicz and Wagner Bojakowska~\cite{MR3685162}.

Deep structural properties of microscopic sets and their generalizations were described in~\cite{MR3568089} and in a very interesting Kwela's paper~\cite{MR3482702} in which the additivity of the ideal of microscopic sets was calculated. Other cardinal invariants of the ideal were estimated in~\cite{MR3823475}.

Ever since 2001 it has been obvious that the definition of microscopic sets is somehow similar but not quite the same as that of Hausdorff measure. The natural question thus has been:
\begin{qestion}
If $\phi$ is a gauge, denote by $\hm^\phi$ the Hausdorff measure on the
line induced by $\phi$ (see Section~\ref{sec:haus}). Is there $\phi$
such that a set $X\subs\Rset$ is microscopic if and only if $\hm^\phi(X)=0$?
\end{qestion}
We single out this question because it is behind most of the outcomes of the present paper.
It seems that the question has not been addressed yet. However, there has been a strong indication that the answer is negative: Kwela~\cite{MR3482702} found that the additivity of the ideal of microscopic sets is $\omega_1$, while it is a well-known fact that the additivity of (any) Hausdorff measure is consistently larger. So the two ideals are consistently different.
Moreover, Jaroslaw Swaczyna (unpublished) found that Woodin absoluteness applies, so, without going into details, Hausdorff null ideals are never equal to that of microscopic sets, modulo the assumptions of Woodin Absolutness Theorem, i.e., the existence of many specific large cardinals.
Shortly after Swaczyna came with his result (actually a few hours), 
the first ZFC proof was discovered. 

\medskip
We now provide a brief definition of the dominated sets and summarize the paper.
Our definition follows, more or less, Horbaczewska's paper~\cite{MR3759529}, 
though we adjust some details, terminology and notation.

Let $\SEQ$ be the family of all sequences of positive numbers strictly decreasing
to zero and $S\subs\SEQ$. A set $M\subs X$ in a metric space $X$ is termed 
\emph{$S$-dominated} if for each $s\in S$ there is a countable cover 
$\{E_n\}$ of $E$ such that $\diam E_n<s_n$ for all $n$. The family of all subsets of a fixed metric space is denoted by $\DD(S)$.
It is clear that if $S$ is the family of all geometric sequences, then $\DD(S)$ are microscopic sets, and also that $\DD(\SEQ)$ are strong measure zero sets. 

The goal of the paper is to systematically study the dominated sets and their relation to Hausdorff measures and, in particular, to answer the above question.

In Section~\ref{sec:dominated} we provide a detailed definition of $S$-dominated sets and underlying notions, provide some examples and 
establish some elementary facts. It is almost immediate that $\DD(S)$ does not have to be an ideal even for $S$ consisting of a single sequence. However, as proved in this section, if $S$ is closed under multiplicative shifts, then $\DD(S)$ is a \si ideal. (A multiplicative $k$-shift of a sequence $\seq{s_n}$ is the sequence $\seq{s_{kn}}$.)

Sections~\ref{sec:haus} and~\ref{sec:gauges} recall Hausdorff measures and prepare some facts regarding Cantor cubes, Cantor sets and gauges.

Let $\NN(\hm^\phi)$ the ideal of sets satisfying $\hm^\phi(X)=0$. In Section~\ref{sec:versus} it is shown that for any sequence $s\in\SEQ$
inclusion $\DD(\closs)\subs\NN(\hm^\phi)$ ($\closs$ being the set of all multiplicative shifts of $s$) is decided by the summability of the sequence $\phi\circ s$.
We also deduce that the answer to the above question is negative and
that $\NN(\hm^\phi)$ is fully determined by sets dominated by sequences
$s$ for which $\phi\circ s$ is summable, in more detail, for any gauge $\phi$
\[
  \NN(\hm^\phi)=\bigcup_{\phi\circ s\in\el1}\DD(\closs).
\]

We already know that Hausdorff measure zero and dominated sets are never the same notion. So it may be a bit of surprise that a little tweak to the definition of dominated set actually yields Hausdorff measure zero for an appropriate gauge. This is done in Section~\ref{sec:sharp}.

Section~\ref{sec:shift} deals with the families of sets contained in 
a \si compact dominated sets and with $S$-dominated sets induced by families $S$ that contain, along each $s\in S$, also all additive shifts of $s$. (An additive $k$-shift of a sequence $\seq{s_n}$ is the sequence $\seq{s_{n+k}}$.)

Section~\ref{sec:dim} describes how to determine Hausdorff dimension 
in terms of dominated sets. A conjecture attributed to Paszkiewicz
and an implicit problem of Kwela~\cite{MR3482702} are resolved.

In Section~\ref{sec:rem} we provide some comments and list a few questions that we consider interesting.
\begin{itemyze}
\item We prove that if $S$ is countably determined, then being $S$-dominated is a typical property in the Hausdorff-Gromov space, extending and generalizing thus the main result of~\cite{MR4122477},
\item comment on theorem of~\cite{MR2429522} regarding microscopic sets and typical continuous functions,
\item resolve a question of~\cite{MR4036593} regarding microscopic Cantor sets.
\end{itemyze}
We also single out several open problems regarding 
\begin{itemyze}
\item relation of dominated sets and Hausdorff measures,
\item additively complete sets,
\end{itemyze}
and maybe the most interesting question about microscopic sets:
Does a non-microscopic Borel set contain a non-microscopic compact set?

\section{Dominated sets}\label{sec:dominated}
In this section we define dominated sets and establish their elementary properties.

Throughout the paper we are using the standard notation. Specifically, $\Nset$ denotes the set of all natural numbers including zero, while $\nset$ denotes the set of all natural numbers excluding zero; $\abs{A}$ denotes the cardinality of a (usually finite) set $A$.
If $X$ is a metric space and $E\subs X$, then $\diam E$ is the diameter of $E$. 
We will make use of the $\el{p}$-spaces (also for $p<1$) over $\nset$
and in particular $\el1$ and $\el2$.
We are using $\log x$ to denote the \emph{binary logarithm} of $x$.

We begin with some notions and notation regarding sequences.
\begin{defn}\label{def:dom}
Let $\SEQ\subs(0,\infty)^{\nset}$ denote the set of all strictly decreasing sequences of positive reals converging to $0$.
We impose the following order on $\SEQ$: $s\leq s'$ if $\forall n\ s_n\leq s'_n$;
$s<s'$ is defined likewise. Clearly $\SEQ$ is closed under multiplication by a positive scalar and a positive power. These operations are denoted by
$qs$ and $s^q$. We also write $qS=\{qs:q\in S\}$.
\begin{itemyze}
\item Write $S'\leq S$ if $\forall s\in S\ \exists s'\in S'\ s'\leq s$ and
$S'\para S$ if $S'\leq S\leq S'$. Say that $S$ is \emph{countably determined}
if there is a countable $S'\para S$.
\item For $s\in\SEQ$ and $k\in\nset$ let $s^{\mult k}=\seq{s_{kn}:n\in\nset}$ be the \emph{multiplicative $k$-shift} of $s$.
Write $s^{\mult{}}$ instead of $s^{\mult2}$.
\item A set $S\subs\SEQ$ is \emph{multiplicatively complete} if
$\forall s\in S\ \exists s'\in S\ s'\leq s^{\mult{}}$. Note that if $S$ is multiplicatively complete, then
$\forall s\in S\ \forall k\ \exists s'\in S\ s'\leq s^{\mult k}$.
\item For $S\subs\SEQ$ we define the \emph{closure of $S$} to be
$\clos S=\{s^{\mult k}:s\in S, k\in\nset\}$. It is clear that $\clos S$ is multiplicatively complete and
$S\subs\clos S$. (However, $\clos S$ is not the smallest multiplicatively complete set containing $S$
-- there is no such set.)
For $s\in\SEQ$ we write $\closs$ instead of $\clos{\{s\}}$.
Note that $\closs=\{s^{\mult k}:k\in\nset\}$ and that $\closs$ is countably determined.
\end{itemyze}
\end{defn}
Besides $\SEQ$ we will also make use of the $\el{p}$-spaces (also for $p<1$)
and in particular $\el1$ and $\el2$.

Horbaczewska~\cite{MR3759529} defines $S$-dominated sets only for $S\subs\SEQ\cap\el1$
and only in the line $\Rset$.
The following definition is a direct extension of her definition to arbitrary sets
$S\subs\SEQ$ and arbitrary separable metric spaces.
\begin{defn}
Let $X$ be a separable metric space.
\begin{itemyze}
\item Let $s\in\SEQ$. A sequence $\seq{E_n:n\in\nset}$ of sets in $X$ is \emph{$s$-fine} if
$\diam E_n<s_n$ for all $n\in\nset$.
\item Let $S\subs\SEQ$. A set $E\subs X$ is \emph{$S$-dominated} if for each $s\in S$ there is
an $s$-fine cover of $E$.
\item The family of all $S$-dominated sets in $X$ is denoted by $\DD_X(S)$ or,
if there is no danger of confusion, just $\DD(S)$.
\item The family of all sets in $X$ that are contained in a \si compact $S$-dominated set
is denoted by $\DDs_X(S)$ or just $\DDs(S)$. Abusing slightly language, we will call
elements of $\DDs(S)$ \emph{\si compact $S$-dominated} sets.
Considering $\DDs(S)$ makes good sense namely in Polish spaces.
Clearly $\DDs(S)\subs\DD(S)$ and except trivial cases the inclusion is proper.
\end{itemyze}
\end{defn}
A few elementary remarks are in place:
it is clear that the witnessing $s$-fine sequences may be supposed to consist of open sets,
as well as closed sets.
It is also obvious that if $S$ is
multiplicatively complete, then one may replace ``$\diam E_n<s_n$'' with ``$\diam E_n\leq s_n$''
in the definition of $s$-fine.
Note that if $S'\leq S$, then $\DD(S')\subs\DD(S)$. Therefore, if $S'\para S$, then
$\DD(S')=\DD(S)$.

We provide a few examples of $S$-dominated sets.
Let us set a notation for two important sequences:
\begin{itemyze}
\item $\geom=\seq{2^{-n}:n\in\nset}$ denotes the binary geometric sequence,
\item $\harm=\seq{\frac1{n+1}:n\in\nset}$ denotes the harmonic sequence.
\end{itemyze}
Note that both start with $\frac12$.
\begin{exs}\label{exs1}
(i) \emph{Microscopic sets.}
Let $S$ consist of all geometric sequences.
By the definition, $E$ is \emph{microscopic} if and only if it is $S$-dominated.
We have $S\para\clos\geom$ and in particular $S$ is countably determined.
Hence a set is microscopic if and only if it is $\clos\geom$-dominated. The family $\DD_X(\clos\geom)$ of all microscopic subsets of $X$
will be occasionally denoted $\micro(X)$ or just $\micro$. Microscopic sets serve as inspiration and template for dominated sets,
but otherwise there is not much particularly special about them.

(ii) \emph{Hausdorff dimension zero.} This example comes from~\cite[4.1]{MR3482702}. Let $S$ consist of all sequences of the form $\seq{\eps^{\ln(n+1)}:n\in\nset}$ where
$\eps\in(0,1)$. The set $S$ is multiplicatively complete and countably determined. The $S$-dominated sets are
denoted by $\MM_{\ln}$ in~\cite[4.1]{MR3482702}. Routine check shows that
$\MM_{\ln}=\DD(\{\harm^q:q>0\})$. We shall see later (Proposition~\ref{dim1})
that $\MM_{\ln}$ is actually the family of all sets of Hausdorff dimension zero.

(iii) \emph{Strong measure zero.}
$\DD(\SEQ)$ is the family of all strong measure zero sets, cf.~\cite{MR4146582}.
By the Perfect Set Theorem, $\DDs(\SEQ)$ is the family of all countable sets.

(iv) \emph{Nanoscopic sets.}
Let $S$ consist of all sequences of the form $\seq{\eps^{2^n}:n\in\nset}$ where
$\eps\in(0,1)$.
It is countably determined, but not multiplicatively complete.
The $S$-dominated sets were introduced in~\cite[3.1]{MR3568089} and called
\emph{nanoscopic} sets. By~\cite[3.6]{MR3568089} nanoscopic sets do not form an ideal,
while by~\cite[Theorem 13]{MR3242549} the \si compact nanoscopic sets do form a \si ideal.

(v) \emph{Picoscopic sets.}
Let $S$ consist of all sequences of the form $\seq{\eps^{n!}:n\in\nset}$ where
$\eps\in(0,1)$.
It is countably determined, but not multiplicatively complete.
The $S$-dominated sets were introduced in~\cite[4.1]{MR3568089} and called
\emph{picoscopic} sets. By~\cite[4.2]{MR3568089} neither picoscopic nor \si compact picoscopic
sets form an ideal. Actually, there are compact sets that differ by a single point, one
picoscopic and the other not.
\end{exs}

The examples show that the families of $S$-dominated and \si compact $S$-dominated sets
need not form an ideal. The following is a sufficient condition.
\begin{thm} \label{thmideals}
Let $S\subs\SEQ$.
\begin{enum}
\item If $S\subs\SEQ$ is multiplicatively complete, then $\DD(S)$ and $\DDs(S)$ are \si ideals,
\item If $S$ is countably determined, then $\DD(S)$ is $G_\del$-based, i.e., for each
$S$-dominated set $E$ there is an $S$-dominated $G_\del$-set $G\supseteq E$.
\end{enum}
\end{thm}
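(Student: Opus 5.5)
For (i), hereditariness is clear: a cover of $E$ also covers every subset of $E$, so $\DD(S)$ is closed under subsets. Closure under countable unions is the real content. Let $E=\bigcup_{i\in\nset}E^i$ with each $E^i\in\DD(S)$, and fix $s\in S$. The idea is to split $\nset$ into infinitely many pairwise disjoint infinite sets $N_i$ and cover $E^i$ only with sets indexed by $N_i$. I would take $N_i=\{2^{i-1}(2m-1):m\in\nset\}$, i.e.\ the numbers whose $2$-adic valuation is $i-1$. Let $\sigma_i(m)=2^{i-1}(2m-1)$ enumerate $N_i$ increasingly. Since $\sigma_i(m)\leq 2^{i}m$, monotonicity of $s$ gives
\[
s_{\sigma_i(m)}\geq s_{2^i m}=(s^{\mult{2^i}})_m .
\]
By multiplicative completeness, as noted after Definition~\ref{def:dom}, there is $s^i\in S$ with $s^i\leq s^{\mult{2^i}}$. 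Because $E^i$ is $S$-dominated, it has an $s^i$-fine cover $\seq{E^i_m:m\in\nset}$, so $\diam E^i_m<s^i_m\leq s_{\sigma_i(m)}$. Define $F_{\sigma_i(m)}=E^i_m$. The sets $N_i$ partition $\nset$, so this defines $F_n$ for every $n$, and $\seq{F_n}$ is an $s$-fine cover of $E$. Hence $E\in\DD(S)$.

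For $\DDs(S)$, hereditariness holds by definition. If each $E^i$ lies in a \si compact $S$-dominated set $K^i$, then $\bigcup_i K^i$ is \si compact, and it is $S$-dominated by the argument above. So $\DDs(S)$ is a \si ideal as well. The only point needing care in (i) is the index bookkeeping, namely that $\sigma_i(m)\leq 2^i m$. That is the step where one multiplicative shift has to absorb the thinning of indices, and I expect it to be the main (though mild) obstacle.

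For (ii), fix a countable $S'=\{s^j:j\in\nset\}$ with $S'\para S$, so that $\DD(S')=\DD(S)$. Let $E\in\DD(S)$. For each $j$ choose an $s^j$-fine cover $\seq{E^j_n}$ of $E$. As remarked after the definition, the sets may be taken open: enlarge each $E^j_n$ to an open neighbourhood $U^j_n$ with $\diam U^j_n<s^j_n$. This is possible because the inequality $\diam E^j_n<s^j_n$ is strict, so one may use the $\eta$-neighbourhood of $E^j_n$ for a small $\eta>0$. Put
\[
G=\bigcap_{j}\bigcup_n U^j_n .
\]
This $G$ is a $G_\del$ set containing $E$. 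For each $j$, the sequence $\seq{U^j_n}$ is an $s^j$-fine cover of $G$, so $G\in\DD(S')=\DD(S)$. Part (ii) is then complete.
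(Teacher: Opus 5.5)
Your proposal is correct and follows the paper's proof essentially verbatim. In (i) you partition $\nset$ by $2$-adic valuation and absorb the thinning of indices with a multiplicative $2^i$-shift (the paper uses a $2^{k+2}$-shift), and in (ii) you intersect open $s$-fine covers over a countable $S'\para S$; your indexing $n=2^{i-1}(2m-1)$ with $m\geq1$ is slightly cleaner than the paper's $n=2^k(2i+1)$ with $i\geq0$, which formally calls for a nonexistent set $E^k_0$, and you also spell out the immediate $\DDs(S)$ case and why the covers may be taken open.
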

\begin{proof}
(i) Let $\{A_k:k\in\nset\}\subs\DD(S)$ and $A=\bigcup_{k\in\nset}A_k$. Let $s\in S$.
For $k\in\Nset$ let $\seq{E_j^k:j\in\nset}$ be an $s^{\mult 2^{k+2}}$-fine cover
of $A_k$ (which exists because $S$ is multiplicatively complete).
For $n\in\nset$ there are unique $k,i$ such that $n=2^k(2i+1)$.
Let $F_n=E_i^k$. Then $\seq{F_n:n\in\nset}$ is a cover of $A$ and
\[
  \diam F_n=\diam E_i^k<s_{2^{k+2}i}\leq s_{2^k(2i+1)}=s_n,
\]
so $\seq{F_n}$ is $s$-fine.

(ii) We may suppose $S$ be countable. Let $E$ be $S$-dominated.
For every $s\in S$ let $\seq{E_n^s:n\in\nset}$ be an open $s$-fine cover of $E$.
Then $G=\bigcap_{s\in S}\bigcup_{n\in\nset}E_n^s$ is clearly $G_\del$, $S$-dominated, and
$E\subs G$.
\end{proof}
Note that (ii) yields, in any Polish space $X$, a partition of $X$ into two sets
$A,B\subs X$ such that $A$ is $S$-dominated and $B$ is meager.

Recall that a sequence $\seq{E_n:n\in\nset}$ of sets in $X$ is a \emph{large cover}
(or a \emph{$\lambda$-cover})
of $E$ if every $x\in E$ is contained in infinitely many $E_n$'s, i.e.,
$\forall x\in E\ \emany n\ x\in E_n$.
\begin{prop}
Let $S\subs\SEQ$ be multiplicatively complete. If $E\subs X$ is $S$-dominated,
then for each $s\in S$ there is a large $s$-fine cover of $E$.
\end{prop}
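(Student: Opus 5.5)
The plan is to reuse the interleaving trick from the proof of Theorem~\ref{thmideals}(i): write $E$ as the countable union $E=\bigcup_{k\in\Nset}E$ of copies of itself and cover the $k$-th copy by a sufficiently thin fine cover. Merging these covers into one sequence makes each point of $E$ lie in at least one set coming from each copy. That gives infinitely many indices $n$ with the point in $F_n$, which is exactly what a large cover requires.

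Concretely, I fix $s\in S$. For each $k\in\Nset$ I use multiplicative completeness to pick $s^k\in S$ with $s^k\leq s^{\mult 2^{k+2}}$. Since $E$ is $S$-dominated, there is an $s^k$-fine cover $\seq{E^k_j:j\in\nset}$ of $E$, and then
\[
\diam E^k_j<s^k_j\leq s_{2^{k+2}j}.
\]
Every $n\in\nset$ can be written uniquely as $n=2^k(2i+1)$ with $k\in\Nset$ and $i\in\Nset$, and I set $F_n=E^k_i$. The index $i=0$ has no set attached, since the covers are indexed by $\nset$. So for $i=0$ I put $F_n=\emptyset$; alternatively I could write $n=2^k(2i-1)$ with $i\in\nset$, noting $2^{k+2}i\geq 2^k(2i-1)$. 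For $i\geq1$ we have $2^{k+2}i\geq 2^k(2i+1)$. Monotonicity of $s$ then gives $\diam F_n<s_n$, so $\seq{F_n}$ is $s$-fine; an empty $F_n$ is trivially fine.

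It remains to check largeness. Take $x\in E$. For every $k$ there is some $i\in\nset$ with $x\in E^k_i=F_{2^k(2i+1)}$. Different $k$ give different indices $n$, because the $2$-adic valuations differ. Hence $x\in F_n$ for infinitely many $n$, so $\seq{F_n}$ is a large $s$-fine cover of $E$.

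The only real care needed is the index bookkeeping that ensures $s_{2^{k+2}i}\leq s_{2^k(2i+1)}$ and handles the $i=0$ edge case; otherwise the argument is just the proof of Theorem~\ref{thmideals}(i) applied to the constant sequence $A_k=E$.
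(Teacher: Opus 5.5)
Your proposal is correct and is essentially the paper's own proof: you take $s^{\mult{2^{k+2}}}$-fine covers of $E$ for each $k$, interleave them along the decomposition $n=2^k(2i+1)$, and use $s_{2^{k+2}i}\leq s_{2^k(2i+1)}$, so each point of $E$ lies in one set from each of the infinitely many interleaved covers. You also make explicit two points the paper leaves implicit: the indices $n=2^k$ (where $i=0$ has no set, since the covers are indexed by $\nset$), which you fill with $F_n=\emptyset$, and the use of multiplicative completeness to get the $s^{\mult{2^{k+2}}}$-fine covers.
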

\begin{proof}
Let $E$ be $S$-dominated and $s\in S$. For every $k\in\Nset$ let
$\seq{E^k_i:i\in\nset}$ be an $s^{\mult{2^{k+2}}}$-fine cover of $E$.
Let $F_n=E^k_i$, where $2^k(2i+1)=n$. Since
\[
  \diam F_n=\diam E^k_i\leq s^{\mult{2^{k+2}}}_i=s_{2^{k+2}i}\leq s_{2^k(2i+1)}=s_n,
\]
the sequence $\seq{F_n:n\in\nset}$ is $s$-fine and it is clearly a large cover of $E$.
\end{proof}

\begin{defn}
\begin{itemyze}
\item A set $S\subs\SEQ$ is \emph{doubling} if
$\forall s\in S\ \exists s'\in S\ s'\leq\frac12s$.
\item A sequence $s\in\SEQ$ is \emph{doubling} if $\closs$ is doubling.
Note that $s$ is doubling if and only if $\exists k\ s^{\mult k}\leq\frac12s$.
\item A sequence $s\in\SEQ$ is \emph{$2$-doubling} if $s^\mult{}\leq \frac12s$.
\end{itemyze}
\end{defn}

\subsection*{Mappings}
It is easy to show that $S$-dominated sets are preserved by $1$-Lipschitz mappings for any
$S\subs\SEQ$.

We want to establish a simple correspondence between in $\Rset$ and $\Cset$.
This correspondence is carried by the canonical mapping $T\colon \Cset\to[0,1]$ defined by
\begin{equation}\label{T}
T(x)=\tfrac12\sum_{n\in\Nset} 2^{-n}x(n).
\end{equation}
We will need a simple nice lemma about $T$.

\begin{lem}[{\cite{invariants2}}]\label{3sets}
If $E\subs\Cset$, then there are sets $E_0,E_1,E_2\subs\Cset$
such that $\diam E_i\leq\diam E$ for all $i=0,1,2$ and $T^{-1}(E)\subs E_0\cup E_1\cup E_2$.
\end{lem}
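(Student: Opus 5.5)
The plan is to read the statement with $T$ as in \eqref{T}: $T^{-1}(E)$ is a subset of $\Cset$, and $\diam E$ is computed where the values of $T$ live, namely in $[0,1]$. The sets $E_0,E_1,E_2$ are subsets of $\Cset$ with the usual metric $d(x,y)=2^{-\min\{n:x(n)\neq y(n)\}}$. I will cover $T^{-1}(E)$ by at most three basic cylinders of a suitably chosen level $m$. Write $[\sigma]=\{x\in\Cset:\sigma\subs x\}$ for $\sigma\in 2^m$. Under the metric above, $\diam[\sigma]=2^{-m}$, and $T([\sigma])$ is exactly a closed dyadic interval $I_\sigma=[k2^{-m},(k+1)2^{-m}]$.

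First I dispose of the trivial cases. If $d=\diam E\geq 1$, take $E_0=\Cset$, which has diameter $1\leq d$. If $d=0$, then $E$ has at most one point, and its preimage under $T$ has at most two points (the two binary expansions of a dyadic rational). Each of these singletons is a set of diameter $0$.

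Now assume $0<d<1$ and let $m\geq1$ be the least integer with $2^{-m}\leq d$, so that $d<2\cdot2^{-m}$. Since $\Cset=\bigcup_{\sigma\in2^m}[\sigma]$, we get
\[
  T^{-1}(E)\subs\bigcup\{[\sigma]:\sigma\in 2^m,\ I_\sigma\cap E\neq\emptyset\}.
\]
Each such cylinder has diameter $2^{-m}\leq d$, so it remains to count the closed dyadic intervals of length $2^{-m}$ that meet $E$.

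The key step is showing that there are at most three such intervals. Suppose four of them met $E$. Then four consecutive-index intervals would be involved, with extreme indices $k$ and $k'\geq k+3$. So $E$ would contain a point $\leq(k+1)2^{-m}$ and a point $\geq k'2^{-m}\geq(k+3)2^{-m}$, which forces $d\geq 2\cdot 2^{-m}$, a contradiction. Hence at most three cylinders are needed, and we label them $E_0,E_1,E_2$, repeating one if fewer occur.

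The only delicate point is the endpoint issue. Dyadic rationals have two preimages, lying in adjacent cylinders. This is why the count must use closed intervals, and why three sets rather than two are needed.
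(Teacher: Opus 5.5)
Your proof is correct and complete. The paper gives no argument for this lemma; it only cites \cite{invariants2}. So there is no in-text route to compare against, and your cylinder-counting argument is a self-contained proof. It goes as follows: take the least $m$ with $2^{-m}\le\diam E$, note that $T([\sigma])$ is exactly the closed dyadic interval $I_\sigma$ of length $2^{-m}$, and check that a set of diameter $<2\cdot 2^{-m}$ meets at most three such intervals.

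You also read the hypothesis correctly as $E\subs[0,1]$. The statement's ``$E\subs\Cset$'' is a typo, since $T^{-1}(E)$ only makes sense for subsets of the range of $T$, and the lemma is applied to subsets of $[0,1]$ in Lemma~\ref{nowik22}.

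Two small wording points.
\begin{enumerate}[\rm(i)]
\item In the counting step, the four intervals need not have consecutive indices. What you actually use, and use correctly, is that any four distinct indices have extreme values differing by at least $3$.
\item Your closing remark misattributes why three sets are needed. Three are needed even ignoring endpoints: a set of diameter just below $2\cdot2^{-m}$ can meet the interiors of three consecutive dyadic intervals of length $2^{-m}$. Moreover, in $\Cset$ every set of diameter $<2^{-m+1}$ lies inside a single cylinder of level $m$. So for your $m$ the bound three is optimal and cannot be lowered to two. Closed intervals matter for a different reason: they are precisely the images $T([\sigma])$, and that is what makes your covering inclusion valid.
\end{enumerate}
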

Let $m\in\nset$ and consider the $m$-th cartesian power $T^m$ of $T$.
Then we have
\begin{lem}\label{nowik22}
Let $S\subs\SEQ$ be multiplicatively complete and $E\subs(\Cset)^m$. Then $E$ is $S$-dominated if and only if
$T^m(E)\subs[0,1]^m$ is $S$-dominated.
\end{lem}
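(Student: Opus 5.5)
The plan is to prove both implications directly from the definitions. The forward direction uses that $T^m$ is $1$-Lipschitz. The reverse direction uses Lemma~\ref{3sets} coordinatewise, and then multiplicative completeness absorbs the bounded multiplicity of the resulting cover. I work with the standard metrics: on $\Cset$, $d(x,y)=2^{-n}$ where $n$ is the first index with $x(n)\neq y(n)$. On $(\Cset)^m$ and on $[0,1]^m$ I use the maximum metric. I read Lemma~\ref{3sets} as applying to sets $E\subs[0,1]$, since $T^{-1}(E)$ only makes sense for such $E$.

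\emph{Forward direction.} If $x,y\in\Cset$ first differ at index $n$, then
\[
\abs{T(x)-T(y)}\leq\tfrac12\sum_{k\geq n}2^{-k}=2^{-n}=d(x,y),
\]
so $T$ is $1$-Lipschitz. With the maximum metrics, $T^m$ is then $1$-Lipschitz as well. Since $1$-Lipschitz maps preserve $S$-dominated sets, as remarked in the paragraph on mappings, $E\in\DD(S)$ implies $T^m(E)\in\DD(S)$. This direction needs neither multiplicative completeness nor any further structure of $S$.

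\emph{Reverse direction.} Suppose $T^m(E)$ is $S$-dominated and fix $s\in S$. Put $N=3^m$. By multiplicative completeness choose $s'\in S$ with $s'\leq s^{\mult N}$, and let $\seq{F_j:j\in\nset}$ be an $s'$-fine cover of $T^m(E)$.
\begin{enumerate}[\rm(1)]
\item For each $j$ and each coordinate $i\leq m$, let $\pi_i(F_j)\subs[0,1]$ be the $i$-th projection of $F_j$. It satisfies $\diam\pi_i(F_j)\leq\diam F_j$, and $F_j\subs\prod_{i\leq m}\pi_i(F_j)$.
\item Lemma~\ref{3sets} covers $T^{-1}(\pi_i(F_j))$ by three sets $E^{i}_{j,0},E^{i}_{j,1},E^{i}_{j,2}$, each of diameter at most $\diam F_j$.
\item Hence $(T^m)^{-1}(F_j)$ is covered by the $N$ product sets $\prod_{i\leq m}E^{i}_{j,\ell_i}$, where $\ell\in\{0,1,2\}^m$. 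In the maximum metric each of these has diameter at most $\diam F_j<s'_j\leq s_{Nj}$.
\item Enumerate these $N$ sets as $G_n$ for $N(j-1)<n\leq Nj$.
\end{enumerate}
Every $n$ in that block satisfies $n\leq Nj$, and $s$ is decreasing, so $\diam G_n<s_{Nj}\leq s_n$. Thus $\seq{G_n}$ is an $s$-fine cover of $E$. As $s\in S$ was arbitrary, $E$ is $S$-dominated.

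The only delicate point is the choice of metrics. I must make sure $T$ is genuinely $1$-Lipschitz, so that no scalar factor appears which multiplicative completeness could not absorb. I must also make sure Lemma~\ref{3sets} passes to products with the same diameter bound, which is why I use the maximum metric. If the paper's metric on $\Cset$ differs by a constant, I would first try to absorb the constant by a further multiplicative shift. That works when $S$ is doubling. Otherwise I would adjust the metric on $\Cset$, which the paper is free to fix, so that $T$ is $1$-Lipschitz.
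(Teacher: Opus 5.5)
Your proof is correct and follows the paper's argument. The forward direction uses that $T^m$ is $1$-Lipschitz. For the converse, multiplicative completeness gives an $s^{\mult 3^m}$-fine cover, Lemma~\ref{3sets} splits the preimage of each cover set into $3^m$ pieces, and these pieces are re-enumerated.

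You also spell out two points the paper leaves implicit or states loosely. First, Lemma~\ref{3sets} is applied coordinatewise with the maximum metric, which is exactly the metric under which $(\Cset)^m$ is isometric to $\Cube(2^m,\frac12)$. Second, your block enumeration $N(j-1)<n\leq Nj$ gives $s_n\geq s_{Nj}$, the inequality you need. The paper's indexing $j=3^mn+i$ instead places indices at or above $3^mn$, which gives the inequality in the wrong direction.
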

\begin{proof}
Since $T$ is $1$-Lipschitz, so is $T^m$, which yields the forward implication.
As to the reverse implication, suppose that $T^m(E)$ is $S$-dominated.
Let $s\in S$. Since $S$ is multiplicatively complete, there is an $s^{\mult 3^m}$-fine cover
$\seq{F_n:n\in\nset}$ of $T^m(E)$. According to Lemma~\ref{3sets}
the preimage of each $F_n$ is covered by $3^m$ many sets $F_{n,i}$, $i<3^m$, each of
diameter at most $s_{3^mn}$. For each $j\in\nset$ there are unique $n\in\nset$, $i<3^m$
such that $j=3^mn+i$. Let $E_j=F_{n,i}$. Routine check proves that $\seq{E_j:j\in\nset}$
is an $s$-fine cover of $E$.
\end{proof}

\section{Hausdorff measures}\label{sec:haus}

In the next section we will start investigating how dominated sets and 
Hausdorff measures are interwined. We first need to recall Hausdorff measures and related background material.

\subsection*{Gauges}
A non-decreasing, right-continuous function $\phi\colon [0,\infty)\to[0,\infty)$
such that $\phi(0)=0$ and $\phi(r)>0$ if $r>0$ is called a \emph{gauge}.
The following is the common (partial) ordering of gauges, cf.~\cite{MR0281862}:
$$
  \phi\preccurlyeq\psi\ \overset{\mathrm{def}}{\equiv}
  \ \limsup_{r\to0+}\frac{\psi(r)}{\phi(r)}<\infty,\qquad
  \phi\prec\psi\ \overset{\mathrm{def}}{\equiv}
  \ \limsup_{r\to0+}\frac{\psi(r)}{\phi(r)}=0
$$
and write $\phi\approx\psi$ if $\phi\prece\psi$ and $\psi\prece\phi$.

In the case when $\phi(r)=r^s$ for some $s>0$,
then we identify $\phi$ with $s$ and
we write $\phi\prece s$ instead of $\phi\prece\psi$;
and likewise for $\phi\succcurlyeq s$.

Given $L>0$, a gauge $\phi$ is termed \emph{$L$-doubling} if
$\phi(2r)\leq L\phi(r)$ for all $r>0$ small enough%
\footnote{This means $\exists R>0\ \forall r\in(0,R]\ \phi(2r)\leq L\phi(r)$. This kind of quantification will be used frequently.}.
A \emph{doubling gauge} is a gauge that is $L$-doubling for some $L$.
We will also make use of a slight modification of the notion of $L$-doubling gauge:
call a gauge $\phi$ \emph{strictly $L$-doubling} if $\phi(2r)<L\phi(r)$
for all $r>0$ small enough.

We will need a number of facts regarding gauge. They are stated and proved in the Appendix.

\subsection*{Hausdorff measures}
Besides standard $s$-dimensional Hausdorff measures we will also make use of those
induced by gauges.
If $\del>0$, a cover $\mc A$ of a set $E\subs X$ is termed a
\emph{$\del$-fine cover} if $\diam A<\del$ for all $A\in\mc A$.
If $\phi$ is a gauge,
the \emph{$\phi$-dimensional Hausdorff measure} $\hm^\phi(E)$ of
a set $E\subs X$ is defined thus:
For each $\del>0$ set
$$
  \hm^\phi_\delta(E)=
  \inf\left\{\sum\nolimits_n\phi(\diam E_n):
  \text{$\{E_n\}$ is a countable $\delta$-fine cover of $E$}\right\}
$$
and put
$$
  \hm^\phi(E)=\sup_{\delta>0}\hm^\phi_\delta(E).
$$
Properties of Hausdorff measures are well-known, see, e.g., \cite{MR0281862}.
Hausdorff measures have a silent parameter: the underlying space $X$.
Sometimes (but rarely) we make it visible by writing $\hm^\phi_X$.

We will be dealing with ideals generated by Hausdorff measures. We will write
\begin{itemyze}
\item $\NN(\hm^\phi)=\{E\subs X:\hm^\phi(E)=0\}$ for the \si ideal of Hausdorff measure zero,
\item $\NNs(\hm^\phi)$ for the \si ideal of \si finite measure $\hm^\phi$.
\end{itemyze}
We will frequently utilize the following well-known facts:
\begin{itemyze}
\item $\phi\prece\psi\implies\NN(\hm^\phi)\subs\NN(\hm^\psi)$
\item $\phi\prec\psi\implies\NNs(\hm^\phi)\subs\NN(\hm^\psi)$
\end{itemyze}

\subsection*{Cantor cubes}
Recall that a metric $d$ is an \emph{ultrametric} if the triangle inequality reads
$d(x,z)\leq\max(d(x,y),d(y,z))$.
We utilize the following simple ultrametric spaces.
Let $A$ be a set (finite or countable and often $A=2$)
and consider the set of all $A$-valued sequences $A^\Nset$.
Note that for $x,y\in A^\Nset$, $x\wedge y$ is the common initial segment of $x$ and $y$.
Thus $\abs{x\wedge y}$ is the length of this initial segment,
i.e., the first $n$ for which $x(n)\neq y(n)$.
Suppose that $r\in\SEQ$.
For $x,y\in A^\Nset$ define $d(x,y)=r_{\abs{x\wedge y}}$. It is easily verified that
$d$ is an ultrametric on $A^\Nset$.
The ultrametric space $(A^\Nset,d)$ is denoted by $\Cube(A,r)$ and termed
a \emph{Cantor cube}. In case when $r_n=\lambda^{n}$ we write $\Cube(A,\lambda)$ instead
of $\Cube(A,r)$.
We single out two particular cases: the space $\Cube(\Nset,\frac12)$ (usually identified
with the irrationals) is denoted just by $\Pset$ and the Cantor set $\Cube(2,\frac12)$
is denoted by $\Cset$.

Note also that for any set $A$, $m\in\nset$ and $r\in\SEQ$ the $m$-th cartesian power
$\Cube(A,r)$ is isometric with $\Cube(A^m,r)$.

\begin{lem}\label{cubes1}
If $A$ is a finite set and $\phi$ is a gauge, then $\hm^\phi(\Cube(A,r))=0$ if and only if
$\liminf_{n\to\infty}\abs A^n\phi(r_n)=0$.
\end{lem}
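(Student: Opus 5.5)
The plan is to compare $\hm^\phi$ with the uniform product measure, using the fact that in $\Cube(A,r)$ the diameter of any set is determined by the longest common initial segment of its points. For a finite sequence $p\in A^n$ write $[p]=\{x\in A^\Nset:p\subs x\}$ for the cylinder determined by $p$. If $\abs A\geq2$, then $\diam[p]=r_n$. Indeed, two points of $[p]$ agree on the first $n$ coordinates, so their distance is $r_m$ with $m\geq n$, hence at most $r_n$ since $r$ is decreasing. Conversely, two points of $[p]$ that first differ at coordinate $n$ are at distance exactly $r_n$. The trivial case $\abs A=1$ is handled separately. Then the space is a single point, so $\hm^\phi$ of it is $0$. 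Also $\abs A^n\phi(r_n)=\phi(r_n)\to\phi(0)=0$ by right-continuity, since $r_n\to0$. So assume $\abs A\geq2$ from now on.

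\emph{Sufficiency.} Suppose $\liminf_n\abs A^n\phi(r_n)=0$ and fix $\del>0$. For every $n$ with $r_n<\del$, the $\abs A^n$ cylinders of length $n$ form a $\del$-fine cover of $\Cube(A,r)$. Its $\phi$-sum is $\abs A^n\phi(r_n)$, so $\hm^\phi_\del(\Cube(A,r))\leq\abs A^n\phi(r_n)$. All but finitely many $n$ satisfy $r_n<\del$ because $r_n\to0$. Taking the infimum along a subsequence on which $\abs A^n\phi(r_n)\to0$ gives $\hm^\phi_\del(\Cube(A,r))=0$ for every $\del>0$, hence $\hm^\phi(\Cube(A,r))=0$.

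\emph{Necessity.} This is a mass distribution argument. Suppose $\liminf_n\abs A^n\phi(r_n)>2c>0$, so there is $N$ with $\abs A^n\phi(r_n)\geq c$ for all $n\geq N$. Let $\mu$ be the uniform product probability measure on $A^\Nset$, so $\mu([p])=\abs A^{-\abs p}$.

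The key step is to show that every $E\subs\Cube(A,r)$ with $\diam E<r_N$ satisfies $\phi(\diam E)\geq c\,\mu(E)$. If $E$ has at most one point, then $\mu(E)=0$, since $\mu$ has no atoms when $\abs A\geq2$, and there is nothing to prove. Otherwise, let $p$ be the longest common initial segment of all points of $E$ and let $k=\abs p$. Then $E\subs[p]$, and some $x,y\in E$ differ at coordinate $k$, so $\diam E=r_k$ exactly. From $r_k<r_N$ and the strict monotonicity of $r$ we get $k>N$. Therefore
\[
  \phi(\diam E)=\phi(r_k)\geq c\abs A^{-k}=c\,\mu([p])\geq c\,\mu(E).
\]

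Now take any $\del\leq r_N$ and any countable $\del$-fine cover $\{E_n\}$ of $\Cube(A,r)$; we may assume the $E_n$ are Borel, e.g.\ closed. Countable subadditivity gives $\sum_n\phi(\diam E_n)\geq c\sum_n\mu(E_n)\geq c$. Hence $\hm^\phi(\Cube(A,r))\geq c>0$.

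The only point needing care is this key step, namely that diameters in the Cantor cube are always exactly some $r_k$ and are realized by the smallest cylinder containing the set. That makes the comparison with $\mu$ exact, with no loss of constants, and the rest is routine.
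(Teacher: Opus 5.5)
Your proof is correct. There is nothing in the paper to compare it against, because Lemma~\ref{cubes1} is stated there without proof, as a standard fact about Cantor cubes. Your argument is the standard proof one would supply:
\begin{itemyze}
\item the $\abs A^n$ cylinders of length $n$ give the upper bound;
\item the uniform product measure $\mu$ gives the lower bound;
\item the lower bound rests on the observation that a set $E$ with at least two points has $\diam E=r_k$ exactly, where $[p]$ with $\abs p=k$ is the smallest cylinder containing $E$.
\end{itemyze}

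Two small remarks. First, any $c<\liminf_n\abs A^n\phi(r_n)$ eventually satisfies $\abs A^n\phi(r_n)\geq c$, not just $c$ below half of it. So your two halves actually prove the sharper identity $\hm^\phi(\Cube(A,r))=\liminf_{n\to\infty}\abs A^n\phi(r_n)$ for $\abs A\geq2$, which also tells you when the measure is finite.

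Second, you can sidestep the measurability aside altogether. Replace each $E_n$ having at least two points by its cylinder $[p_n]$, for which $\phi(\diam E_n)\geq c\,\mu([p_n])$. The remaining $E_n$ are singletons or empty, so their union is countable and hence $\mu$-null. Thus the cylinders $[p_n]$ cover a set of full measure, and $\sum_n\phi(\diam E_n)\geq c$ follows.
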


\subsection*{Symmetric Cantor sets}
If the sequence $r\in\SEQ$ satisfies $r_{n+1}\leq\frac12r_n$ for each $n$,
we also define the standard \emph{symmetric Cantor set} $\cube(r)\subs\Rset$ induced by $r$:
Let $I_\emptyset$ be a closed interval of length $r_0$
and if $p\in2^n$ and $I_p$ has been defined, let $I_{p\concat0}$ and
$I_{p\concat1}$ be intervals of length $r_{n+1}$ such that the left endpoint of
$I_{p\concat0}$ coincides with left endpoint of $I_p$ and the right endpoint of
$I_{p\concat1}$ coincides with right endpoint of $I_p$. Then let
$\cube(r)=\bigcap_{n\in\Nset}\bigcup\{I_p:p\in2^n\}$.

The symmetric Cantor set for the case when $r_n=\lambda^n$ for some $\lambda\in(0,1/2]$
is denoted by $\cube(\lambda)$. Note that $\cube(\frac12)=[0,1]$.
\begin{lem}\label{cubes2}
\begin{enum}
\item The mapping $T_r\colon \Cube(2,r)\to\cube(r)$
that assigns to every $x\in\Cset$ the (unique) point
in the intersection $\bigcap_{n\in\Nset}I_{x\rest n}$ is a $1$-Lipschitz surjection.
\item
(\cite[4.11]{MR1333890}) If $\phi$ is a continuous gauge such that $\phi(r_n)=2^{-n}$,
then $\frac14\leq\hm^\phi(\cube(r))\leq1$.
\item
(\cite[Theorem 1.14]{MR867284}) Let $\lambda\leq\frac12$.
If $2\lambda^\beta=1$, then $\hm^\beta(\cube(\lambda))=1$.
\end{enum}
\end{lem}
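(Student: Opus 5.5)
Of Lemma~\ref{cubes2}, parts (ii) and (iii) are quoted results from \cite{MR1333890} and \cite{MR867284}, so only part (i) needs an argument. For (ii) and (iii) I would simply check that the hypotheses match. In (ii) the nesting condition $r_{n+1}\leq\frac12 r_n$ is built into the definition of $\cube(r)$. In (iii), $r_n=\lambda^n$ with $\lambda\leq\frac12$, so the same condition holds.

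The plan for (i) is to verify three things in turn: that $T_r$ is well defined, that it is surjective, and that it is $1$-Lipschitz.

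\emph{Well defined.} Fix $x\in\Cset$. The intervals $I_{x\rest n}$ form a decreasing sequence of nonempty closed intervals: $I_{p\concat i}\subs I_p$, because $r_{n+1}\leq r_n$ and each child shares an endpoint with $I_p$. Their lengths $r_n$ tend to $0$. By compactness the intersection $\bigcap_n I_{x\rest n}$ is nonempty, and since the lengths shrink to $0$ it is a single point.

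\emph{Surjective.} Let $y\in\cube(r)$. For each $n$ there is some $p\in 2^n$ with $y\in I_p$.
\begin{itemyze}
\item The two children of $I_p$ are disjoint when $r_{n+1}<\frac12 r_n$. If $r_{n+1}=\frac12r_n$ they may share their middle endpoint.
\item In either case, whenever $y\in I_p$ there is a child of $I_p$ that contains $y$.
\end{itemyze}
So I would build $x$ inductively, choosing at each stage a child containing $y$; then $T_r(x)=y$. Equivalently, the tree of finite sequences $p$ with $y\in I_p$ is finitely branching and has nodes at every level, so K\"onig's lemma gives an infinite branch.

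\emph{Lipschitz.} Take $x\neq y$ and let $n=\abs{x\wedge y}$, so that $d(x,y)=r_n$. Both points $T_r(x)$ and $T_r(y)$ lie in $I_{x\rest n}=I_{y\rest n}$, which has length $r_n$. Hence
\[
  \abs{T_r(x)-T_r(y)}\leq r_n=d(x,y).
\]

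The only mildly delicate point is the touching case $r_{n+1}=\frac12r_n$ in the surjectivity step, where $T_r$ need not be injective. Surjectivity still holds, because the argument only ever chooses some child containing $y$, not a unique one. Everything else is routine nested-interval reasoning.
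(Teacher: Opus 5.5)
Your proposal is correct and matches what the paper intends: the paper gives no argument for (i), treating it as routine nested-interval reasoning as you do, and simply cites Mattila and Falconer for (ii) and (iii). One caution concerns your step-by-step surjectivity argument. The claim that some child of $I_p$ contains $y$ whenever $y\in I_p$ is false for points of the gap, and it does not follow from the disjoint/touching dichotomy of the children; it needs $y\in\cube(r)$ plus the fact that two distinct intervals of the same level meet only in a common endpoint, which lies in a child. Your K\"onig-lemma version is airtight as stated, since $\{p: y\in I_p\}$ is closed under initial segments by nestedness and meets every level because $y\in\cube(r)$.
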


\subsection*{Mappings}
It is well-known that $\hm^\phi$ measure zero is preserved by $1$-Lipschitz mappings.
The following parallels Lemma~\ref{nowik22}. See~\cite{invariants2} for the proof.
\begin{lem}\label{nowik33}
If $E\subs(\Cset)^m$, then
$\hm^\phi(E)=0$ if and only if $\hm^\phi(T^m(E))=0$.
\end{lem}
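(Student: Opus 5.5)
Throughout, $m\in\nset$, $E\subs(\Cset)^m$ and $\phi$ is a gauge.

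\emph{Forward direction.} Both Cantor-set factors carry the metric $d(x,y)=2^{-\abs{x\wedge y}}$, i.e.\ $\Cube(2,\frac12)$. The map $T$ is $1$-Lipschitz: if $\abs{x\wedge y}=n$, then $\abs{T(x)-T(y)}\leq\frac12\sum_{k\geq n}2^{-k}=2^{-n}=d(x,y)$. With the maximum metric on products, $T^m$ is $1$-Lipschitz as well. Since $1$-Lipschitz maps do not increase diameters, any $\del$-fine cover $\{E_n\}$ of $E$ gives a $\del$-fine cover $\{T^m(E_n)\}$ of $T^m(E)$, and $\phi(\diam T^m(E_n))\leq\phi(\diam E_n)$ because $\phi$ is non-decreasing. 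Hence $\hm^\phi(T^m(E))\leq\hm^\phi(E)$, and $\hm^\phi(E)=0$ implies $\hm^\phi(T^m(E))=0$.

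\emph{Reverse direction.} Assume $\hm^\phi(T^m(E))=0$, and fix $\eps>0$ and $\del>0$. Choose a $\del$-fine cover $\{F_n\}$ of $T^m(E)$ with $\sum_n\phi(\diam F_n)<\eps$. By Lemma~\ref{3sets}, applied coordinatewise as in the proof of Lemma~\ref{nowik22}, each $(T^m)^{-1}(F_n)$ is covered by $3^m$ sets $F_{n,i}$ with $\diam F_{n,i}\leq\diam F_n<\del$. The family $\{F_{n,i}\}$ is then a $\del$-fine cover of $E$, and
\[
  \sum_{n,i}\phi(\diam F_{n,i})\leq 3^m\sum_n\phi(\diam F_n)<3^m\eps .
\]
As $\eps$ is arbitrary, $\hm^\phi_\del(E)=0$ for every $\del$, so $\hm^\phi(E)=0$.

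\emph{The coordinatewise step.} In the product, $\diam$ is the maximum of the coordinate diameters. Write $F_n\subs\prod_{j<m}\pi_j(F_n)$, where $\pi_j$ is the $j$-th projection. Each $\pi_j(F_n)$ has diameter at most $\diam F_n$, so Lemma~\ref{3sets} covers $T^{-1}(\pi_j(F_n))$ by three sets of diameter at most $\diam F_n$. The $3^m$ products of these sets cover $(T^m)^{-1}(F_n)$, and each product has diameter at most $\diam F_n$.

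\emph{Where the care is needed.} The main point to get right is that Lemma~\ref{3sets} must be read with $E\subs[0,1]$, even though it is written with $E\subs\Cset$, since it is applied to sets in the image of $T$. The constant $3^m$ is harmless here only because we are proving the statement for measure zero and not for exact equality of measures.
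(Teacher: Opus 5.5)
Your proof is correct and follows the route the paper indicates (the paper gives no proof, only saying the lemma parallels Lemma~\ref{nowik22} and deferring to \cite{invariants2}): the $1$-Lipschitz property of $T^m$ for the forward direction, and a coordinatewise application of Lemma~\ref{3sets} giving $\hm^\phi_\del(E)\leq 3^m\hm^\phi_\del(T^m(E))$ for the reverse. You are also right to read Lemma~\ref{3sets} as a statement about sets in $[0,1]$, and to use the maximum metric on both products, which is what the proof of Lemma~\ref{nowik22} tacitly assumes when it says $T^m$ is $1$-Lipschitz.
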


\subsection*{Howroyd Theorem}
We will repeatedly utilize the following deep result of How\-royd.
Recall that a metric space $X$ is \emph{doubling} if there is $N\in\Nset$ such that
every ball in $X$ can be covered by $N$ many balls of halved radii.
Note that $\Cset$ and all Euclidean spaces are doubling.
\begin{thm}[{\cite{MR1317515}}]\label{howroyd}
Suppose that $X$ is analytic, $\phi$ is a gauge and any of the following conditions is met:
$X$ is ultrametric, $X$ is doubling, $\phi$ is doubling.
Then every Borel set $E\subs X$ such that $\hm^\phi(E)>0$ contains a compact subset
$C\subs E$ such that $0<\hm^\phi(C)<\infty$.
\end{thm}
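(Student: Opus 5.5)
The statement is Howroyd's theorem, quoted from~\cite{MR1317515}, so the paper itself will not prove it. If I had to reconstruct a proof, I would follow the classical Davies--Rogers--Howroyd route. It has three parts: reduce to compact sets, build a Frostman measure, and then cut down to a subset of finite measure.

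First, I would reduce to the case where $E$ is compact with $\hm^\phi(E)>0$. Since $X$ is analytic, $E$ is analytic. For analytic sets I would prove an ``increasing sets lemma'' for a suitably regularized Hausdorff premeasure. The natural choice is Howroyd's weighted Hausdorff measure
\[
  \lambda^\phi_\delta(E)=\inf\Bigl\{\sum_i c_i\phi(\diam B_i):\ \sum_i c_i\mathbf 1_{B_i}\geq\mathbf 1_E,\ \diam B_i<\delta\Bigr\},
\]
which is continuous along increasing sequences of sets. I would then show that $\hm^\phi$ and $\lambda^\phi$ have the same null sets; here I only need positivity to be preserved. With the increasing sets lemma in hand, the standard Souslin-scheme argument gives a compact $K\subs E$ with $\hm^\phi(K)>0$.

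Second, I would apply the Hahn--Banach theorem on $C(K)$ to the sublinear functional $f\mapsto\lambda^\phi_\delta(f)$, the weighted premeasure of a nonnegative function. This yields a Radon measure $\mu$ on $K$ satisfying $\mu(B)\leq\phi(\diam B)$ for every small set $B$, together with $\mu(K)=\lambda^\phi_\delta(K)>0$. By the mass distribution principle, every Borel $C\subs K$ with $\mu(C)>0$ then has $\hm^\phi(C)\geq\mu(C)>0$.

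Third, I need such a $C$ with $\hm^\phi(C)<\infty$. I would consider the upper $\phi$-density
\[
  \overline D(x)=\limsup_{r\to0}\frac{\mu(B(x,r))}{\phi(2r)}
\]
and choose $c>0$ so that $\mu(\{x:\overline D(x)\geq c\})>0$. I would then show that $\{\overline D\geq c\}$ has $\hm^\phi$-measure at most $C'\mu(K)/c$. This goes through a Vitali or Besicovitch-type covering argument, and inner regularity of $\mu$ finally gives the compact set $C$.

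I expect this last step, and the comparison $\hm^\phi\approx\lambda^\phi$ in the first step, to be the main obstacle. Both require a covering theorem, and this is precisely where the three alternative hypotheses are needed. In an ultrametric space, balls are either nested or disjoint, so one can simply select maximal balls. In a doubling space, the $5r$-covering lemma costs only a bounded number of doublings. When $\phi$ is doubling, enlarging balls costs only a constant factor in $\phi$. I would therefore split the covering estimate into these three cases and check in each that the constant $C'$ is finite.
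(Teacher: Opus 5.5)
The paper does not prove this statement; it is quoted from Howroyd. Your first two steps are a reasonable outline: the increasing sets lemma for $\lambda^\phi_\del$, the reduction to a compact $K$, and a Frostman measure from Hahn--Banach. The third step, however, has a genuine gap. You simply choose $c>0$ with $\mu(\{\overline D\geq c\})>0$, and such a $c$ need not exist. This can fail even for an extremal Frostman measure, and even with all three hypotheses in force.

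Take $K=\Cset$, which is both ultrametric and doubling, and the doubling gauge $\phi(t)=\sqrt t$, so that $\hm^\phi(\Cset)=\infty$. Fix $n_0$ and $\del\in(2^{-n_0},2^{1-n_0}]$. A set of diameter $<\del$ lies in a cylinder $[p]=\{x:x\rest\abs p=p\}$ with $\abs p\geq n_0$ and the same diameter $2^{-\abs p}$. Let $\nu$ be the coin-tossing measure and $\mu=2^{n_0/2}\nu$. Then $\mu([p])=2^{n_0/2-\abs p}\leq 2^{-\abs p/2}=\phi(\diam[p])$. On the other hand, the $2^{n_0}$ cylinders of length $n_0$ cost exactly $2^{n_0/2}=\mu(\Cset)$. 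Hence $\mu(\Cset)=\lambda^\phi_\del(\Cset)$, and $\mu$ is a legitimate output of your Hahn--Banach step. Yet $\mu(B(x,r))<2^{n_0/2}r$, so $\mu(B(x,r))/\phi(2r)<2^{n_0/2}\sqrt{r/2}\to0$, and $\overline D\equiv0$.

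This is typical rather than exceptional, for three reasons. A Frostman measure only bounds $\hm^\phi$ from below. Hahn--Banach at the single scale $\del$ gives no information about smaller scales. And when $\hm^\phi(K)=\infty$, which is the only case that matters, $\overline D=0$ $\mu$-a.e.\ is perfectly consistent with the density estimates. So the covering estimate you were worried about is harmless. The trouble is that the set you want to apply it to can be $\mu$-null.

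What is missing is the mechanism that actually produces finiteness, and it has to be enforced at every scale. This is the Besicovitch--Davies construction, which Howroyd's weighted premeasures and their increasing sets lemma are meant to make work in metric spaces. One builds compact sets $K\supseteq K_1\supseteq K_2\supseteq\cdots$. At stage $k$, take each piece $P$ of a partition at scale $2^{-k}$ (cylinders, dyadic cubes, or a net). Inside $P$, cut the current set down to a compact subset whose premeasure does not exceed $\phi(\diam P)$. This uses an intermediate value property of the premeasure, which is where the increasing sets lemma and continuity along decreasing compact sets enter. One then checks that this trimming does not decrease the premeasure at the top scale. The set $\bigcap_kK_k$ then has finite $\hm^\phi$-measure by construction, and positive measure by continuity of the premeasure along decreasing compact sets.

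In the example above, the trimming produces, for instance, the subset of $\Cset$ that branches only at even levels. That set has $0<\hm^\phi<\infty$, and it is $\mu$-null, so your measure does not see it at all. A single Frostman measure followed by a density argument cannot replace this multiscale construction.
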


\section{Dominated sets versus Hausdorff measure zero I}\label{sec:versus}
In this section we establish some relations between dominated sets and sets of
Hausdorff measure zero.

\begin{thm}\label{versus1}
Let $\phi$ be a gauge and $s\in\SEQ$. The following are equivalent.
\begin{enum}
\item $\phi\circ s\in\el1$,
\item for every metric space $X$, $\DD_X(\closs)\subs\NN(\hm_X^\phi)$,
\item for every Cantor cube $X$, $\DDs_X(\closs)\subs\NNs(\hm_X^\phi)$,
\item $\DD_{\el2}(\closs)\subs\NN(\hm_{\el2}^\phi)$.
\end{enum}
\end{thm}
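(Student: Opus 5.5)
The plan is to prove (i)$\Rightarrow$(ii), note that (ii) trivially gives (iii) and (iv), and then prove (iii)$\Rightarrow$(i) and (iv)$\Rightarrow$(i) by contraposition. Both contrapositives will come from one construction of a compact Cantor-type set.

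\emph{(i)$\Rightarrow$(ii).} Let $E\in\DD_X(\closs)$, fix $k$, and take an $s^{\mult k}$-fine cover $\seq{E_n}$ of $E$.
\begin{itemyze}
\item Since $s$ is decreasing and $\phi$ is non-decreasing, $k\,\phi(s_{kn})\leq\sum_{m=k(n-1)+1}^{kn}\phi(s_m)$.
\item Hence $\sum_n\phi(\diam E_n)\leq\frac1k\Abs{\phi\circ s}_1$.
\item All $E_n$ have diameter below $s_k$, so the cover is $s_k$-fine in the sense of Hausdorff measures.
\end{itemyze}
Therefore $\hm^\phi_{s_k}(E)\leq\frac1k\Abs{\phi\circ s}_1$. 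Letting $k\to\infty$ gives $\hm^\phi(E)=0$. Since $\NN\subs\NNs$, this yields (iii), and (iv) is the special case $X=\el2$.

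\emph{Not (i) implies not (iii) and not (iv).} Assume $\sum_n\phi(s_n)=\infty$.

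First I would pass to a lacunary scale. Monotonicity gives Cauchy condensation: $\sum_j 2^j\phi(s_{2^j})=\infty$, and likewise for $s^{\mult k}$ for every $k$.

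Next I would build a Cantor cube $\Cube(A,r)$, or more flexibly a tree with finite branching $b_j$ at level $j$, with level-$j$ diameters $r_j$. The parameters should satisfy two requirements.
\begin{itemyze}
\item \emph{Domination.} For every $k$ there should be an $s^{\mult k}$-fine cover by cylinders. The natural choice is the cut that uses all $P_j=b_0\cdots b_{j-1}$ cylinders of level $j$. These receive indices up to $P_j$, so it suffices that $r_j<s_{kP_j}$ for some $j$, or for all large $j$.
\item \emph{Size.} The natural mass distribution, assigning $1/P_j$ to each level-$j$ cylinder, should satisfy $\mu(C)\lesssim\phi(\diam C)$. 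This is a mass-distribution version of Lemma~\ref{cubes1}. It yields $\hm^\phi>0$, and on a compact set it yields non-\si finiteness after taking a slightly smaller gauge. Alternatively, Howroyd's Theorem~\ref{howroyd} shows that positivity on every portion suffices.
\end{itemyze}
Finally, the resulting compact set is to be transferred to $\el2$ by an embedding. A finitely branching ultrametric tree embeds into $\el2$ by sending $x$ to $\sum_j c_j e_{x\rest (j+1)}$ with orthogonal unit vectors $e_p$ and suitable weights $c_j$. This map should be bi-Lipschitz up to constants, so it preserves both $\closs$-domination (using multiplicative completeness to absorb the constants, as in Lemma~\ref{nowik22}) and positivity of $\hm^\phi$, at least up to replacing $\phi$ by a comparable gauge. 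If the constants cannot be absorbed, I would build the set directly in $\el2$ instead.

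\emph{Main obstacle.} The difficulty is matching the two requirements. Divergence of $\sum\phi(s_n)$ does not force $n\phi(s_n)$ to stay bounded away from $0$; for example $\phi(s_n)=1/(n\log n)$ diverges while $n\phi(s_n)\to0$. So a single-level cut with $r_j\approx s_{kP_j}$ gives mass $P_j\phi(r_j)$ that may tend to $0$, and the uniform choice fails. The plan is to use covers that mix levels: one cylinder, then two, then four, and so on, with block $i$ using diameters $s_{k2^{i}}$. The target set should carry a measure that is forced to spread across all dyadic blocks, so that its $\hm^\phi$-mass is controlled from below by the divergent tail $\sum_{i\geq i_0}2^i\phi(s_{k2^{i+1}})$ rather than by any single term. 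Concretely, I would try a Cantor set in which the branching at level $i$ is chosen, via the condensed divergent series, so that $\sum_{i<j}2^i\phi(s_{2^{i+c}})$ dominates $1/\mu$ of a level-$j$ cylinder. Making this estimate work uniformly in $k$ is the step I expect to be the hardest.
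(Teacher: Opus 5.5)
Your proof of (i)$\Rightarrow$(ii) is correct. Your bound $\frac1k\Abs{\phi\circ s}_1$ works as well as the paper's tail $\sum_{n\geq k}\phi(s_n)$, and the trivial implications are fine.

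The gap is the converse. You never produce a set witnessing the failure of (iii); the proposal stops at exactly the decisive estimate. A second point is also left open. To get outside $\NNs(\hm^\phi)$, and not merely outside $\NN(\hm^\phi)$, the construction must be run for a gauge $\psi$ with $\phi\prec\psi$ (i.e.\ $\psi=o(\phi)$) that still satisfies $\psi\circ s\notin\el1$. A set with $\hm^\phi>0$ may have finite $\hm^\phi$-measure, so shrinking the gauge afterwards does not help. That such $\psi$ exists is a ``no slowest divergent series'' fact (Ash), which is Lemma~\ref{ash2}(ii) in the paper.

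A minor point on the embedding: multiplicative shifts cannot absorb a constant factor in general (e.g.\ $s_n=1/\log(n+1)$). None is needed, though. With $c_j^2=\frac12(r_j^2-r_{j+1}^2)$ your map into $\el2$ is an isometry; the paper simply cites Timan--Vestfrid.

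The missing idea is to reverse your design. Fix one set, independent of $k$, whose positivity comes from a single-scale condition, and let non-summability produce covers that mix levels freely.

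\emph{The set.} Put $\psi^*(t)=\inf\{u:\psi(u)\geq t\}$, so that $\psi^*(\psi(t))\leq t$ and $\psi(\psi^*(t))\geq t$. Let $X=\Cube(2,r)$ with $r_j=\psi^*(2^{-j})$. Then $2^j\psi(r_j)\geq 1$, so $\hm^\psi(X)>0$ by Lemma~\ref{cubes1}, and hence $X\notin\NNs(\hm^\phi)$.

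\emph{Domination.} Fix $k$; by monotonicity $\sum_n\psi(s_{kn})=\infty$. Let $\ell_n=\lceil\log\frac1{\psi(s_{kn})}\rceil$, discarding finitely many $n$ so that $\ell_n\geq 0$. The $\ell_n$ are nondecreasing and $\sum_n 2^{-\ell_n}=\infty$. So dyadic intervals of lengths $2^{-\ell_1},2^{-\ell_2},\dots$, laid consecutively from $0$, stay dyadically aligned and exhaust $[0,1)$ after finitely many steps. The corresponding cylinders of levels $\ell_n$ are disjoint with total measure $1$, hence cover $\Cset$. Each has $r$-diameter
\[
r_{\ell_n}=\psi^*(2^{-\ell_n})\leq\psi^*(\psi(s_{kn}))\leq s_{kn}.
\]
The non-strict inequality suffices because $\closs$ is multiplicatively complete.

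This is the paper's argument. There it is phrased as follows: $[0,1]$, and hence $\Cset$ by Lemma~\ref{nowik22}, is $\clos a$-dominated for $a=\psi\circ s$. Passing from the metric of $\Cset$ to that of $\Cube(2,r)$ via $\psi^*$ turns $a^{\mult k}$-fine covers into $s^{\mult k}$-fine ones. In particular, the uniformity in $k$ you were worried about never arises.
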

\begin{proof}
(i)$\Rightarrow$(ii)
Suppose that $E\subs X$ is $\closs$-dominated.
For $k\in\nset$ let $\{E_{n}\}$ be an $s^{\mult k}$-fine cover of $E$. Then
$\hm_{s_{k}}^{\phi}(E)\leq\sum_{n\in\nset}\phi(s_{kn})\leq\sum_{n\geq k}\phi(s_{n})$
and since $\phi\circ s\in\el1$, letting $k\to\infty$ yields $\hm^{\phi}(E)=0$.

(ii)$\Rightarrow$(iii) is trivial.

(iii)$\Rightarrow$(i)
Suppose that $\phi\circ s\notin\el1$. We shall find a Cantor cube $X$ such that
$\DDs_X(\closs)\nsubseteq\NNs(\hm^\phi)$.

Use Lemma~\ref{ash2}(ii) to get $\psi\succ\phi$ such that $\psi\circ s\notin\el1$.
Let $a=\psi\circ s$. Since $a$ is nonincreasing, $a^{\mult k}\notin\el1$ for every $k$,
hence one can cover $[0,1]$ with intervals of length $a^{\mult k}_n$.
It follows that $[0,1]\in\DD(\clos a)$ and by Lemma~\ref{nowik22} also $\Cset\in\DD(\clos a)$.
Define $\psi^*(t)=\inf\{u:\psi(u)\geq t\}$. Straightforward calculation shows that
\begin{itemyze}
\item[(a)] $\forall t>0\ \psi^*(\psi(t))\leq t$.
\item[(b)] $\forall t>0\ \psi(\psi^*(t))\geq t$,
\end{itemyze}
Let $r_n=\psi^*(2^{-n})$ and consider the Cantor cube $\Cube(2,r)$.
Then (a) yields $\Cube(2,r)\in\DD(\closs)$ and (b) together with Lemma~\ref{cubes1}
yields $\hm^\psi(\Cube(2,r))>0$.
Since $\phi\prec\psi$, we have $\Cube(2,r)\notin\NNs(\hm^\phi)$, as desired.

(ii)$\Rightarrow$(iv) is trivial.

(iv)$\Rightarrow$(iii) This trivially follows from the following facts:
every Cantor cube is an ultrametric space and, as proved in~\cite{MR695109},
every separable ultrametric space isometrically embeds into $\el2$.
\end{proof}
Note that (iv) remains valid if $\el2$ is replaced with other separable universal
metric spaces, e.g., the Urysohn space or $C([0,1])$.

\begin{thm}\label{versus2}
Let $X$ be analytic and doubling or ultrametric and let $\phi\circ s\in\el1$.
If $X$ is not $\closs$-dominated, then $\DD_X(\closs)\subsetneqq\NN(\hm_X^\phi)$.
Consequently, $\DD_{\el2}(\closs)\subsetneqq\NN(\hm_{\el2}^\phi)$.
\end{thm}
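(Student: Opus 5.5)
By Theorem~\ref{versus1}, (i)$\Rightarrow$(ii), the inclusion $\DD_X(\closs)\subs\NN(\hm^\phi_X)$ already holds, so only properness needs proof. I would find a gauge $\psi$ that is strictly larger than $\phi$ and still makes $\psi\circ s$ summable. Theorem~\ref{versus1} applied to $\psi$ then gives $\DD_X(\closs)\subs\NN(\hm^\psi_X)$, and Howroyd's Theorem~\ref{howroyd} produces a compact set that is $\hm^\phi$-null but not $\hm^\psi$-null, hence not $\closs$-dominated.

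\emph{Step 1 (the gauge).} I want a gauge $\psi$ with $\phi\prec\psi$ in the paper's sense, i.e.\ $\phi/\psi\to0$ at $0$, such that $\psi\circ s\in\el1$. Since $a=\phi\circ s$ is summable, there are $b_n\nearrow\infty$ with $\sum a_nb_n<\infty$; take, e.g., $b_n=(\sum_{j\geq n}a_j)^{-1/2}$. I then want $\psi\approx\phi\cdot\beta$, where $\beta$ is the nonincreasing step function with $\beta=b_n$ on $[s_n,s_{n-1})$.

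The product $\phi\beta$ need not be nondecreasing, so I would first look for this statement among the gauge facts of the Appendix; Lemma~\ref{ash2} is the summable counterpart of what was used in the proof of Theorem~\ref{versus1}. Failing that, I would regularize: put
\[
  \psi(r)=\inf_{u\geq r}\phi(u)\beta(u),
\]
capped suitably away from $0$. This $\psi$ is nondecreasing, and right-continuity can be arranged. I would then check that $\phi/\psi\to0$ and $\psi(s_n)\leq\phi(s_n)b_n$, possibly after slowing $b_n$ down further. I expect this technical construction to be the main obstacle, though it is routine.

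\emph{Step 2 (the witness).} If $\hm^\psi(X)=0$, then $\hm^\phi(X)=0$, so $X$ itself lies in $\NN(\hm^\phi)\setminus\DD(\closs)$. Otherwise $\hm^\psi(X)>0$. Since $X$ is analytic and either doubling or ultrametric, Theorem~\ref{howroyd} gives a compact $C\subs X$ with $0<\hm^\psi(C)<\infty$. Then $C\in\NNs(\hm^\psi)\subs\NN(\hm^\phi)$ because $\phi\prec\psi$. On the other hand $C\notin\NN(\hm^\psi)\supseteq\DD(\closs)$, the last inclusion coming from Theorem~\ref{versus1} applied to $\psi$. So $C$ witnesses properness.

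\emph{Step 3 (the consequence for $\el2$).} The space $\el2$ is neither doubling nor ultrametric, so I pass through a subspace. Take $Y=\Pset=\Cube(\Nset,\frac12)$, which is Polish (hence analytic) and ultrametric.

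\begin{itemyze}
\item $Y$ is not $\closs$-dominated. Replace $s$ by a multiplicative shift so that $s_1<1$; note $\closs$ only needs to fail for one of its members. A set of diameter $<1$ lies in a single cylinder of length $\geq1$. Given any $s$-fine sequence $\seq{E_n}$, I build $x\in\Pset$ coordinate by coordinate so that $x$ avoids $E_n$ at stage $n$, using the infinitely many available branches.
\item By \cite{MR695109}, $Y$ embeds isometrically into $\el2$.
\item Being $\closs$-dominated and having $\hm^\phi$ measure zero are both intrinsic to the metric of the set. Hence the witness found in $Y$ by Step~2 is also a witness in $\el2$.
\end{itemyze}
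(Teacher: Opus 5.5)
Your argument for the main claim is the paper's. Lemma~\ref{ash2}(i) is exactly the gauge of your Step~1. In the paper's notation it reads $\psi\prec\phi$, because there $\phi\prec\psi$ means $\psi/\phi\to0$; the property you actually use, $\phi/\psi\to0$, is the right one. Theorem~\ref{versus1} then gives $\DD(\closs)\subs\NN(\hm^\psi)$, and Howroyd's theorem (packaged in the paper as Lemma~\ref{2gauges}) produces a compact set that is $\hm^\phi$-null but not $\hm^\psi$-null. Splitting cases on $\hm^\psi(X)$ rather than $\hm^\phi(X)$ makes no difference. Your fallback $\psi(r)=\inf_{u\geq r}\phi(u)\beta(u)$ also works as it stands: if $\beta\geq M$ on $(0,\delta)$, then $\psi(r)\geq\min(M\phi(r),\phi(\delta)b_1)=M\phi(r)$ for small $r$.

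For $\el2$ you take a different route. The paper uses the compact cube $\Cube(2,r)$ with $\phi(r_n)\geq2^{-n}$. Lemma~\ref{cubes1} gives $\hm^\phi(\Cube(2,r))>0$, so the cube is not $\closs$-dominated by Theorem~\ref{versus1}, and no combinatorics is needed. You use the fixed space $\Pset$ and prove non-domination directly. That gives slightly more ($\Pset$ is not $\closs$-dominated for any $s$), but the diagonalization as phrased needs repair.

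You cannot deal with $E_n$ ``at stage $n$'', because $E_n$ is only confined to a cylinder of length $\ell_n$, which may be much smaller than $n$ when $s$ decreases slowly. For instance, if $s_n>\frac12$ for all $n\leq N$, then $E_n=\{x:x(0)=n-1\}$ is admissible for $n\leq N$. Choosing $x(0)=1$ to avoid $E_1$ already puts $x$ in $E_2$.

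The fix is as follows. Write $E_n\subs\{x:x\rest\ell_n=\sigma_n\}$ with $\sigma_n\in\Nset^{\ell_n}$ and $\ell_n\geq1$, where $\ell_n\to\infty$ because $\diam E_n<s_n\to0$. Then each coordinate $j$ is responsible only for the finitely many $n$ with $\ell_n=j+1$. Choose $x(j)\notin\{\sigma_n(j):\ell_n=j+1\}$, which is possible because there are infinitely many symbols. With this bookkeeping your Step~3 is complete.
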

\begin{proof}
If $\hm^\phi(X)=0$, then $X$ is a witness. If $\hm^\phi(X)>0$, Lemma~\ref{ash2} yields
a gauge $\psi\prec\phi$
such that $\psi\circ s\in\el1$. By Lemma~\ref{2gauges}
$\NN(\hm^\psi)\subsetneqq\NN(\hm^\phi)$. Since Theorem~\ref{versus1} yields
$\DD(\closs)\subs\NN(\hm^\psi)$, we have $\DD(\closs)\subsetneqq\NN(\hm^\phi)$.

To prove the inclusion regarding $\el2$,
pick a sequence $r\in\SEQ$ such that $\phi(r_n)\geq 2^{-n}$ for all $n$
and consider the Cantor cube $C=\Cube(2,r)$.
Lemma~\ref{cubes1} yields $\hm^\phi(C)>0$, hence $C$ is not $\closs$-dominated
by Theorem~\ref{versus1}.
Since $C$ is ultrametric, we have $\DD_C(\closs)\subsetneqq\NN(\hm^\phi)$
and since $C$ isometrically embeds into $\el2$, we are done.
\end{proof}

\begin{coro}\label{sigma}
For every gauge $\phi$ and every $s\in\SEQ$
\begin{enum}
\item  $\DD_{\el2}(\closs)\neq\NN(\hm_{\el2}^\phi)$,
\item  $\DD_{\el2}(\closs)\neq\NNs(\hm_{\el2}^\phi)$.
\end{enum}
\end{coro}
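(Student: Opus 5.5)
Both parts split on whether $\phi\circ s\in\el1$.

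\emph{Case $\phi\circ s\in\el1$.} By Theorem~\ref{versus2}, $\DD_{\el2}(\closs)\subsetneqq\NN(\hm^\phi_{\el2})$, which gives (i). Since $\NN(\hm^\phi)\subs\NNs(\hm^\phi)$, it also gives $\DD_{\el2}(\closs)\subsetneqq\NNs(\hm^\phi_{\el2})$, which is (ii).

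\emph{Case $\phi\circ s\notin\el1$.} Here Theorem~\ref{versus1} gives $\DD_{\el2}(\closs)\nsubseteq\NN(\hm^\phi_{\el2})$, which is (i). For (ii), the implication (iii)$\Rightarrow$(i) of Theorem~\ref{versus1}, in contrapositive form, gives a Cantor cube $X$ with $\DDs_X(\closs)\nsubseteq\NNs(\hm^\phi_X)$. Concretely, it supplies a set $C=\Cube(2,r)$ that is $\closs$-dominated (and compact) with $C\notin\NNs(\hm^\phi)$. Every separable ultrametric space isometrically embeds into $\el2$ by~\cite{MR695109}, and both domination and $\hm^\phi$ are invariant under isometry. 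So the image of $C$ in $\el2$ belongs to $\DD_{\el2}(\closs)\setminus\NNs(\hm^\phi_{\el2})$, which gives (ii).

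The only point needing care is this transfer to $\el2$ in the non-summable case. One must confirm that $\hm^\phi$ of a subset of $\el2$ computed in $\el2$ equals its intrinsic value. This holds because covers can be intersected with the subset without increasing diameters.
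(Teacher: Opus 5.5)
Your proposal is correct and follows the paper's proof: the same split on whether $\phi\circ s\in\el1$, with Theorem~\ref{versus2} handling the summable case and the $\closs$-dominated Cantor cube from the proof of Theorem~\ref{versus1} handling the non-summable case. The only differences are that you spell out the isometric embedding into $\el2$, which the paper leaves implicit, and that you get (i) in the non-summable case from Theorem~\ref{versus1}(iv) directly rather than from the same Cantor-cube witness.
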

\begin{proof}
If $\phi\circ s\in\el1$, then Theorem~\ref{versus2} yields both (i) and (ii) follow.
If $\phi\circ s\notin\el1$, then Theorem~\ref{versus1} yields a Cantor cube
such that $\DD_X(\closs)\nsubseteq\NNs(\hm^\phi)$ which is enough for both (i) and (ii).
\end{proof}

\begin{thm}\label{versus3}
Suppose $\phi$ is a $2$-doubling gauge and $s\in\SEQ$. The following are equivalent.
\begin{enum}
\item $\phi\circ s\in\el1$,
\item $\DDs_\Rset(\closs)\subs\NN(\hm^\phi)$,
\item $\DD_\Rset(\closs)\subsetneqq\NN(\hm^\phi)$.
\end{enum}
\end{thm}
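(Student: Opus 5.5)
The plan is to prove (i)$\Rightarrow$(iii)$\Rightarrow$(ii)$\Rightarrow$(i).

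\emph{(i)$\Rightarrow$(iii).} Theorem~\ref{versus1} already gives $\DD_\Rset(\closs)\subs\NN(\hm^\phi)$, so only properness remains. I will get it from Theorem~\ref{versus2} with $X=\Rset$, which is analytic and doubling. Its hypothesis is that $\Rset$ is not $\closs$-dominated, and this is where the $2$-doubling of $\phi$ is used.
\begin{itemyze}
\item Iterating $\phi(2r)\leq2\phi(r)$ down the dyadic scale gives $\phi(2^{-n})\geq c\,2^{-n}$ for some $c>0$. By monotonicity, $\phi(r)\geq \frac c2 r$ for all small $r$.
\item Hence $\phi\circ s\in\el1$ forces $s\in\el1$.
\item An $s^{\mult k}$-fine cover of $[0,1]$ has total length at most $\sum_{n\geq k}s_n$, which is eventually smaller than $1$. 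So $[0,1]$, and hence $\Rset$, is not $\closs$-dominated.
\end{itemyze}
Theorem~\ref{versus2} then yields $\DD_\Rset(\closs)\subsetneqq\NN(\hm^\phi)$.

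\emph{(iii)$\Rightarrow$(ii)} is trivial, since $\DDs\subs\DD$.

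\emph{(ii)$\Rightarrow$(i).} I argue by contraposition: assuming $\phi\circ s\notin\el1$, I will produce a compact $\closs$-dominated set $K\subs\Rset$ with $\hm^\phi(K)>0$. In contrast to Theorem~\ref{versus1}, I only need positivity of $\hm^\phi$ itself, so no auxiliary gauge $\psi\succ\phi$ is required. After replacing $\phi$ by an equivalent continuous, strictly increasing gauge (still $2$-doubling up to a constant, with $\phi\circ s\notin\el1$ preserved), set $r_n=\phi^{-1}(2^{-n})$.
\begin{itemyze}
\item Exactly as in the proof of Theorem~\ref{versus1}, the sequence $a=\phi\circ s$ is nonincreasing and not summable. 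Hence $[0,1]\in\DD(\clos a)$, and by Lemma~\ref{nowik22} also $\Cset\in\DD(\clos a)$.
\item Pulling the $a^{\mult k}$-fine covers back through $\phi^{-1}$ shows that $\Cube(2,r)$ is $\closs$-dominated.
\item By Lemma~\ref{cubes2}(i) the map $T_r$ is $1$-Lipschitz onto $K=\cube(r)$, so $K$ is a compact $\closs$-dominated set.
\item Lemma~\ref{cubes2}(ii) gives $\hm^\phi(K)\geq\frac14$, contradicting (ii).
\end{itemyze}

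The main obstacle is ensuring that $\cube(r)$ is defined at all, i.e.\ that $r_{n+1}\leq\frac12 r_n$. The $2$-doubling gives $\phi(r_n/2)\geq\frac12\phi(r_n)=\phi(r_{n+1})$, hence $r_{n+1}\leq r_n/2$ by monotonicity of $\phi^{-1}$. Two details must be checked carefully here: strictness and continuity of the modified gauge, and the fact that the doubling inequality only holds for small $r$. If these fail, I would first discard finitely many initial terms, shifting $r$ additively and rescaling. Failing that, I would use a strictly $L$-doubling modification from the Appendix, with $L$ slightly above $2$ and a correspondingly adjusted construction.
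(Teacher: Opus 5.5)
Your overall route is the paper's. For (i)$\Rightarrow$(iii) you go through Theorem~\ref{versus2}, checking its hypothesis by showing $[0,1]$ is not $\closs$-dominated, whereas the paper shows $\hm^\phi(\Rset)>0$ via Lemma~\ref{doubling2}; both are fine. For (ii)$\Rightarrow$(i) you use the symmetric Cantor set $\cube(r)$ with $r_n=\phi^{-1}(2^{-n})$, as a $1$-Lipschitz image of the $\closs$-dominated cube $\Cube(2,r)$.

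The gap is in the step you yourself call the main obstacle. Your inequality $\psi(r_n/2)\geq\frac12\psi(r_n)=\psi(r_{n+1})$ needs the regularized gauge $\psi$ to be doubling with constant exactly $2$. An arbitrary continuous, strictly increasing $\psi$ with $a\phi\leq\psi\leq b\phi$ is only $\frac{2b}{a}$-doubling (your ``$2$-doubling up to a constant''). That does not give $r_{n+1}\leq\frac12 r_n$, and discarding finitely many terms does not repair a wrong constant.

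Your fallback, a strictly $L$-doubling gauge with $L$ slightly above $2$, cannot be rescued by any adjusted construction, because the exact constant is essential to the theorem. Take $\phi(r)=r^2$, which is $4$-doubling. Then $\hm^2$ vanishes on every subset of $\Rset$, so (ii) holds for every $s$. Yet (i) fails for $s_n=(n+1)^{-1/2}$.

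The missing ingredient is Lemma~\ref{doubling11} applied with $L=2$, and this is exactly how the paper's proof begins. It gives a continuous, strictly increasing, strictly $2$-doubling $\psi\approx\phi$; the point of that lemma is that the regularization can be done without losing the constant $2$. With this $\psi$:
\begin{itemyze}
\item $\psi\circ s\notin\el1$ and $\NN(\hm^\psi)=\NN(\hm^\phi)$;
\item $r_{n+1}<\frac12 r_n$ for all large $n$;
\item your shift-and-rescale device handles the initial terms, since level-$N$ cylinders of $\Cube(2,r)$ are isometric to $\Cube(2,\seq{r_{n+N}})$.
\end{itemyze}
The rest of your argument then goes through.

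Alternatively, you can skip regularization. Take the generalized inverse $r_n=\inf\{u:\phi(u)\geq 2^{-n}\}$ of the original $\phi$.
\begin{itemyze}
\item Right-continuity and $\phi(2r)\leq 2\phi(r)$ give $r_{n+1}\leq\frac12 r_n$ for large $n$ directly; this is the first step of the proof of Lemma~\ref{doubling11}.
\item $\Cube(2,r)$ is $\closs$-dominated exactly as in the proof of Theorem~\ref{versus1}.
\item Let $\psi$ be the piecewise linear gauge with $\psi(r_n)=2^{-n}$. On $[r_{n+1},r_n]$ you have $\phi\geq 2^{-n-1}$ and $\psi\leq 2^{-n}$, so $\phi\geq\frac12\psi$. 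Lemma~\ref{cubes2}(ii) then gives $\hm^\phi(\cube(r))>0$.
\end{itemyze}
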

\begin{proof}
(i)$\Rightarrow$(iii)
Note that since $\phi$ is $2$-doubling, Lemma~\ref{doubling2} yields
$\phi\prece1$ and therefore $\hm^\phi(\Rset)>0$.
Consequently, the proof of Theorem~\ref{versus2} yields $\DD(\closs)\subsetneqq\NN(\hm^\phi)$.

(iii)$\Rightarrow$(ii) is trivial.

(ii)$\Rightarrow$(i)
We modify the proof of Theorem~\ref{versus1}(iii)$\Rightarrow$(i):
Lemma~\ref{2gauges} lets us start with a strictly $2$-doubling increasing continuous gauge
$\phi$.
Let $r_n=\phi^{-1}(2^{-n})$.
As argued in the mentioned proof, the Cantor cube $\Cube(2,r)$ is $\closs$-dominated.
Since $\phi$ is strictly $2$-doubling, we have $r_{n+1}<\frac12 r_n$,
hence the symmetric Cantor set $\cube(r)$ is well defined.
By Lemma~\ref{cubes2}, $\cube(r)$ is a $1$-Lipschitz image of
an $\closs$\nobreakdash-dominated set $\Cube(2,r)$ and is therefore also
$\closs$\nobreakdash-dominated,
and at the same time $\hm^\phi(\cube(r))>0$.
Overall, $\cube(r)$ witnesses $\DDs_\Rset(\closs)\neq\NN(\hm^\phi)$.
\end{proof}

The following theorem is a substantial generalization of~\cite[Theorem 6]{MR3685162},
see also~\cite[Theorem 13]{MR4185783}.
\begin{thm}\label{versus4}
Let $\phi$ be a continuous gauge. If $E\subs X$ and $\hm^\phi(E)=0$,
then there is $s\in\SEQ$
such that $\phi\circ s\in\el1$ and $E$ is $\closs$-dominated. In other words,
\[
  \NN(\hm^\phi)=\bigcup_{\phi\circ s\in\el1}\DD(\closs).
\]
\end{thm}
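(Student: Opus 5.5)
The plan is to build a single sequence $s$ by merging Hausdorff-null covers from all levels, with the level-$k$ cover repeated $k$ times. The repetition is chosen so that the multiplicative $k$-shift $s^{\mult k}$ still dominates the level-$k$ cover, while the weights $k4^{-k}$ keep $\phi\circ s$ summable.

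\emph{Step 1: the covers.} Since $\hm^\phi(E)=0$ (so $\hm^\phi_\delta(E)=0$ for every $\delta$), for each $k\in\nset$ we choose a countable cover $\seq{E^k_i:i\in\nset}$ of $E$ with $\sum_i\phi(\diam E^k_i)<4^{-k}$. A finite cover is padded with empty sets. To avoid zero diameters, we replace each diameter by a positive number $d^k_i>\diam E^k_i$ with $\sum_i\phi(d^k_i)<4^{-k}$. This is possible by right-continuity of $\phi$ and $\phi(0)=0$: for each $i$ pick $d^k_i$ with $\phi(d^k_i)<\phi(\diam E^k_i)+\eta 2^{-i}$, where $\eta>0$ is the slack $4^{-k}-\sum_i\phi(\diam E^k_i)$.

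Since $\phi(\eps)>0$ for every $\eps>0$ and $\sum_i\phi(d^k_i)<\infty$, only finitely many $d^k_i$ exceed any given $\eps$. Hence we may reindex each level so that $i\mapsto d^k_i$ is nonincreasing and tends to $0$.

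\emph{Step 2: the merged sequence.} Form the countable multiset in which each $d^k_i$ appears with multiplicity $k$. Its total $\phi$-mass is $\sum_k k\,4^{-k}<\infty$, so again only finitely many entries exceed each $\eps>0$. Therefore the multiset has a nonincreasing enumeration $u=\seq{u_n:n\in\nset}$ with $u_n\to0$ and $\phi\circ u\in\el1$.

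The key counting observation concerns a fixed $k$ and $n$. The entries $d^k_1,\dots,d^k_n$ are all $\geq d^k_n$, and each occurs $k$ times in the multiset, so at least $kn$ entries of $u$ are $\geq d^k_n$. Hence
\[
  u_{kn}\geq d^k_n>\diam E^k_n .
\]
Thus the level-$k$ cover is $u^{\mult k}$-fine.

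\emph{Step 3: making $u$ strictly decreasing.} Finally we perturb $u$ upward to some $s\in\SEQ$ with $s_n\geq u_n$ and $\phi(s_n)\leq\phi(u_n)+2^{-n}$. We do this recursively from a rapidly decreasing tail: using right-continuity of $\phi$ at $u_n$, choose $s_n\in(u_n,u_n+2^{-n})$ small enough for the $\phi$ bound and also $s_n<s_{n-1}$. This is possible since $u_n\leq u_{n-1}<s_{n-1}$. Then $s$ is strictly decreasing, $s_n\to0$, and $\phi\circ s\in\el1$.

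For each $k$, $\diam E^k_n<u_{kn}\leq s_{kn}$, so $E$ has an $s^{\mult k}$-fine cover. Therefore $E\in\DD(\closs)$.

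The reverse inclusion $\bigcup_{\phi\circ s\in\el1}\DD(\closs)\subs\NN(\hm^\phi)$ is Theorem~\ref{versus1}, (i)$\Rightarrow$(ii).

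The main obstacle is Step~2: choosing the multiplicities so that one sequence simultaneously dominates covers under every multiplicative shift. Weighting level $k$ by $k$ and taking level-$k$ mass $4^{-k}$ is what reconciles the shift condition with summability. The remaining steps are routine bookkeeping with right-continuity.
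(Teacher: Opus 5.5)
Your proof is correct, and it shares the paper's central device. The level-$k$ cover is repeated $k$ times (the paper does this by the stretching $n\mapsto\lceil n/k\rceil$), so the multiplicative $k$-shift still sees the level-$k$ cover, while the total mass is $\sum_k k\cdot(\text{mass of level }k)<\infty$.

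Where you differ is in how the levels are combined. The paper adds the $\phi$-values pointwise, $r_n=\sum_k\phi(\delta^k_{\lceil n/k\rceil})$, and pulls back via $s_n=\phi^{-1}(r_n)$. This makes the estimate $\phi(\delta^k_n)<r_{kn}$ immediate, and strict decrease of $s$ comes for free from the $k=1$ term. But inverting $\phi$ is exactly why the paper must assume $\phi$ continuous and normalize it to be strictly increasing. You instead merge the radii themselves and take the nonincreasing rearrangement. Domination $u_{kn}\geq d^k_n$ then comes from an order-statistics count, and $\phi\circ u\in\el1$ from summing over the multiset. The price is a separate perturbation step to obtain strict decrease.

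What your route buys: you never invert $\phi$. You use only monotonicity and right-continuity, which every gauge has. So your argument proves the statement for arbitrary gauges in arbitrary metric spaces. This subsumes the paper's next theorem for $\Rset^m$ and $\Pset$, where the paper removes continuity by a reduction through $T^m$ and the Baire space.

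Two cosmetic remarks. Level masses $2^{-k}$ would already suffice, since $\sum_k k2^{-k}<\infty$. The phrase ``recursively from a rapidly decreasing tail'' is unclear; the recursion you actually describe is the ordinary forward one, and it is fine.
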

\begin{proof}
We may assume that $\phi$ is strictly increasing, thus it is a bijection. In particular,
it has an inverse function $\phi^{-1}$.

Since $\hm^\phi(E)=0$, there is, for each $k\in\nset$, a cover $\seq{E_n^k:n\in\nset}$
such that
\begin{itemyze}
\item[(a)] $\sum_n\phi(\diam E_n^k)<2^{-k}$.
\end{itemyze}
Choose a one-to-one sequence $\seq{\del_n^k:n\in\nset}$ of positive numbers such that
\begin{itemyze}
\item[(b)] $\del_n^k>\diam E_n^k$,
\item[(c)] $\sum_n\phi(\del_n^k)<2^{-k}$,
\end{itemyze}
which is possible because of (a) and right-continuity of $\phi$.
Since $\del_n^k$ are positive and (c) yields
\begin{itemyze}
\item[(d)] $\lim_{n\to\infty}\del_n^k=0$,
\end{itemyze}
it is possible to rearrange the sequence so that it is nonincreasing, and since
it is one-to-one, the rearrangement is actually strictly decreasing.
So, we may suppose that $\seq{\del_n^k:n\in\nset}$ is a strictly decreasing sequence
satisfying (b) and (c).

For each $n\in\nset$ let $r_n=\sum_{k\in\nset}\phi(\del_{\lceil n/k\rceil}^k)$
(where $\lceil x\rceil$ is the ceiling function).
By (c), $r_1<\sum_n2^{-n}<\infty$.
Since $\seq{\del_n^k:n\in\nset}$ is strictly decreasing, the sequence
$\seq{r_n:n\in\nset}$ is also strictly decreasing.
Let $s_n=\phi^{-1}(r_n)$. Since $\phi^{-1}$ is strictly
increasing, $s_n$ is strictly decreasing.

Next we show that $\phi\circ s\in\el1$. Using definitions, Fubini Theorem
and (c) we get
\begin{align*}
\sum_{n\in\nset}\phi(s_n)&=\sum_{n\in\nset}\phi\phi^{-1}(r_n)
  =\sum_{n\in\nset}r_n
  =\sum_{n\in\nset}\sum_{k\in\nset}\phi(\del_{\lceil n/k\rceil}^k)\\
  &=\sum_{k\in\nset}\sum_{n\in\nset}\phi(\del_{\lceil n/k\rceil}^k)
  =\sum_{k\in\nset}k\sum_{m\in\nset}\phi(\del_m^k)
  <\sum_{k\in\nset}k\, 2^{-k}<\infty,
\end{align*}
which proves that $\phi\circ s\in\el1$, as desired.
This also proves that $s_n\to0$, so $s\in\SEQ$.

It remains to prove that the set $E$ is $\closs$-dominated.
Fix $k\in\nset$ and consider the cover $\seq{E_n^k:n\in\nset}$.
We have
\[
  \phi(\diam E_n^k)<\phi(\del_n^k)=\phi(\del_{\lceil kn/k\rceil}^k)
  <r_{kn},
\]
and therefore $\diam E_n^k<\phi^{-1}(r_{kn})=s_{kn}=s^{\mult k}_n$, as required.
\end{proof}
On $\Rset^m$ and $\Pset$ we can drop the continuity of $\phi$.
\begin{thm}
Let $\phi$ be a gauge. If $E\subs\Rset^m$ or $E\subs\Pset$ and $\hm^\phi(E)=0$,
then there is $s\in\SEQ$ such that $\phi\circ s\in\el1$ and $E$ is $\closs$-dominated.
In other words,
\[
  \NN(\hm^\phi)=\bigcup_{\phi\circ s\in\el1}\DD(\closs).
\]
\end{thm}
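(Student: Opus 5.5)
The plan is to reduce the statement to Theorem~\ref{versus4} by replacing $\phi$ with a continuous gauge that has the same null sets, at least on the spaces in question. The inclusion $\supseteq$ does not need continuity: it is Theorem~\ref{versus1}(i)$\Rightarrow$(ii) and holds for every gauge. So only $\subseteq$ remains. Given $E$ with $\hm^\phi(E)=0$, we want $s\in\SEQ$ with $\phi\circ s\in\el1$ and $E\in\DD(\closs)$.

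The first step is to pick a continuous gauge $\tilde\phi\leq\phi$ that is equivalent to $\phi$ on the relevant scales. For $\Pset=\Cube(\Nset,\frac12)$, every set has diameter in $\{0\}\cup\{2^{-n}\}$, so only the values $\phi(2^{-n})$ matter. Let $\tilde\phi$ be the piecewise linear interpolation of $\phi$ through the points $(2^{-n},\phi(2^{-n}))$, with $\tilde\phi(0)=0$ and $\tilde\phi$ constant beyond $\frac12$. Then $\tilde\phi$ is continuous, nondecreasing and agrees with $\phi$ at every admissible diameter, so $\hm^{\tilde\phi}_\Pset=\hm^\phi_\Pset$.

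For $\Rset^m$ the plan is to set $\tilde\phi(r)=\phi(r/2)$ interpolated, or more simply $\tilde\phi(r)=\frac1r\int_{r/2}^{r}\phi(t)\,dt$. This $\tilde\phi$ is continuous, and it satisfies $\phi(r/2)\leq\tilde\phi(r)\leq\phi(r)$. Then $\hm^\phi(E)=0$ gives $\hm^{\tilde\phi}(E)=0$, because $\tilde\phi\leq\phi$. Apply Theorem~\ref{versus4} to $\tilde\phi$ to obtain $s'$ with $\tilde\phi\circ s'\in\el1$ and $E\in\DD(\clos{s'})$. Put $s=s'/2$. Then $\phi(s_n)=\phi(s'_n/2)\leq\tilde\phi(s'_n)$, so $\phi\circ s\in\el1$.

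The difficulty is that halving the sequence must preserve domination, i.e.\ we need $E\in\DD(\closs)$ with $s=s'/2$. In $\Rset^m$ a set of diameter $<d$ can be covered by $N_m$ sets of diameter $<d/2$, with $N_m$ depending only on $m$. So we will prove $E\in\DD(\clos{s'})$ implies $E\in\DD(\clos{(s'/2)})$ by the interleaving trick from the proof of Lemma~\ref{nowik22}. Given $k$, take an $s'^{\mult kN_m}$-fine cover, split each piece into $N_m$ pieces, and index them as $j=N_m n+i$. This yields an $(s'/2)^{\mult k}$-fine cover, using monotonicity of $s'$ as in that proof, with routine adjustments for the off-by-$N_m$ indices.

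In $\Pset$ there is no need to halve, since $\tilde\phi=\phi$ on all actual diameters. If Theorem~\ref{versus4} produces $s$ with values off the grid $\{2^{-n}\}$, replace $s_n$ by a slightly smaller perturbation of the largest grid value $<s_n$. This keeps domination, because diameters in $\Pset$ lie on the grid, and it keeps $\phi\circ s\leq\tilde\phi\circ s_{\text{old}}$ summable; strict decrease may need a small perturbation. I expect the main obstacle to be making the $\Rset^m$ halving step fully precise: choosing $\tilde\phi$ so that it is continuous, is genuinely a gauge, and yields both $\hm^{\tilde\phi}(E)=0$ and summability of $\phi\circ(s'/2)$. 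If averaging causes trouble, the fallback is to take $\tilde\phi$ to be the piecewise linear interpolation of $r\mapsto\phi(r/2)$ at dyadic points.
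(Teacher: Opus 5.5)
Your proof is correct. For $E\subs\Pset$ it is essentially the paper's argument. For $E\subs\Rset^m$ it takes a genuinely different route.

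\textbf{How the routes differ.} The paper proves only the $\Pset$ case directly, by the same grid argument you give, and transfers $\Rset^m$ to it. It assumes $E\subs[0,1]^m$, pulls $E$ back along $T^m$ to $(\Cset)^m$ using Lemma~\ref{nowik22} (via Lemma~\ref{3sets}) and Lemma~\ref{nowik33}, and embeds $(\Cset)^m\cong\Cube(2^m,\frac12)$ isometrically into $\Pset$. You stay in $\Rset^m$ instead. You replace $\phi$ by a continuous minorant, apply Theorem~\ref{versus4} to it, and pay for the loss $r\mapsto r/2$ by halving the sequence, which the doubling property of $\Rset^m$ absorbs through interleaving.

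\textbf{What each buys.} Your route is more elementary: it needs none of the transfer lemmas and works verbatim in any doubling metric space. It also handles unbounded $E$ directly, whereas the paper's ``we may suppose $E\subs[0,1]^m$'' silently needs a small diagonal argument to merge the sequences obtained for countably many cubes into one. The paper's route reuses machinery already set up and treats $\Rset^m$ and $\Pset$ uniformly. Some grid argument is unavoidable for $\Pset$ in either case, since $\Pset$ is not doubling.

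\textbf{Details to fix when writing it up.}
\begin{enumerate}
\item Your $\tilde\phi(r)=\frac1r\int_{r/2}^r\phi(t)\,dt$ equals $\int_{1/2}^1\phi(ru)\,du$. This form makes monotonicity obvious and gives $\frac12\phi(r/2)\leq\tilde\phi(r)\leq\frac12\phi(r)$. So you only get $\phi(s'_n/2)\leq2\tilde\phi(s'_n)$; the factor $2$ is harmless, or use the normalization $\frac2r$.
\item The indexing $j=N_mn+i$ goes the wrong way for $i>0$, since you need $s'_{kN_mn}\leq s'_{kj}$. Use $j=N_m(n-1)+i+1$, which gives $kj\leq kN_mn$.
\item In $\Pset$ the perturbation must go \emph{up}, not down. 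Let $g_n$ be the largest power of $\frac12$ below $s_n$. Put $t_n=g_n+\eps_n$ with $\eps_n\searrow0$ strictly decreasing and, by right-continuity of $\phi$, $\phi(t_n)\leq\phi(g_n)+2^{-n}$. Then every set of diameter $<s_{kn}$ has diameter $\leq g_{kn}<t_{kn}$, so domination is kept. Also $\phi(t_n)\leq\tilde\phi(s_n)+2^{-n}$, which is summable, and $t$ is strictly decreasing. A value below $g_n$ would miss sets of diameter exactly $g_n$.
\end{enumerate}
With the upward perturbation you actually get $t\in\SEQ$. The paper's choice $s'_n=\max\{2^{-i}:2^{-i}\leq s_n\}$ glosses over this.
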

\begin{proof}
We first handle the case $E\subs\Pset$. Let $\psi$ be a continuous gauge such that
$\psi(2^{-n})=\phi(2^{-n})$ for all $n$.
Since the non-zero values of the metric on $\Pset$ are limited to the powers of $\frac12$,
the two Hausdorff measures $\hm^\psi$ and $\hm^\phi$ coincide on $\Pset$,
hence $\hm^\psi(E)=0$.
By Theorem~\ref{versus4}, there is a sequence $s\in\SEQ$ such that $E$ is $\closs$-dominated
and $\psi\circ s\in\el1$.
For each $n$ let $s_n'=\max\{2^{-i}:2^{-i}\leq s_n\}$.
Then clearly $\Abs{\phi\circ s'}_1=\Abs{\psi\circ s'}_1\leq\Abs{\psi\circ s}_1$,
so $\phi\circ s'\in\el1$.
On the other hand, if the diameter of a set is below
$s_n$, then it is at most equal to $s_n'$. Therefore, by the remark following
Definition~\ref{def:dom}, any $\closs$-dominated set
is also $\closs'$-dominated. Hence $E$ is $\closs'$-dominated.

Now assume that $E\subs\Rset^m$. We may suppose $E\subs[0,1]^m$. Since we are concerned with dominated sets and Hausdorff measure zero only,
Lemmas~\ref{nowik22} and~\ref{nowik33} let us suppose that $E\subs(\Cset)^m$.
Since $(\Cset)^m$ is isometric with $\Cube(2^m,\frac12)$ and the latter embeds isometrically into $\Pset$, we are done.
\end{proof}

\section{Dominated sets versus Hausdorff measure zero II}\label{sec:sharp}

We will see later that $\closs$-dominated sets and Hausdorff measure zero sets are
distinct ideals.
In this section, however, we prove that a little tweak to the definition of
$\closs$-dominated sets yields Hausdorff measure zero.

\begin{defn}
Let $s\in\SEQ$ and $\phi$  a gauge. Write $\phi\asymp s$ or $s\asymp\phi$ if
\[
  \exists a,b>0\ \fmany n\in\nset\quad a\leq n\phi(s_n)\leq b
\]
\end{defn}
Let us list a few elementary properties of this relation.
Suppose $s\in\SEQ$ and $\phi,\psi$ are gauges.
\begin{itemyze}
\item If $\phi(s_n)=\frac1n$, then $\phi\asymp s$.
\item If $s\asymp\phi\approx\psi$, then $s\asymp\psi$.
\item If $\phi\asymp s\asymp\psi$, then $\phi\approx\psi$
and \emph{a fortiori} $\NN(\hm^\phi)=\NN(\hm^\psi)$,
\item so if $\phi\asymp s$ and we are concerned only with null sets,
we may suppose without loss of generality that $\phi(s_n)=\frac1n$.
\item For each $s$ there is $\phi\asymp s$ that is continuous and strictly increasing.
\item For each $2$-doubling $s$ there is $\phi\asymp s$
that is strictly $2$-doubling, continuous and strictly increasing.
\end{itemyze}
\begin{prop}
Let $s\in\SEQ$ and $\phi\asymp s$.
\begin{enum}
\item If $\phi\asymp s$, then $s$ is doubling if and only if $\phi$ is doubling,
\item $s$ is $2$-doubling if and only if there is a $2$-doubling $\phi\asymp s$,
\item If $s$ is $2$-doubling then there is $\phi\asymp s$ that
is strictly $2$-doubling, continuous and strictly increasing.
\end{enum}
\end{prop}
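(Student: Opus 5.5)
The approach is to translate the doubling properties of $s$ into properties of $\phi$ through the normalization $\phi(s_n)=\frac1n$, which we may assume since we only care about $\phi$ up to $\approx$. The key translation is this: if $s^{\mult k}\leq\frac12 s$, i.e.\ $2s_{kn}\leq s_n$, then monotonicity gives $\phi(2s_{kn})\leq\phi(s_n)=\frac1n=k\phi(s_{kn})$, so doubling of $s$ controls $\phi(2r)/\phi(r)$ along the values $r=s_{kn}$. To pass to arbitrary small $r$, we choose $m$ with $s_{m+1}<r\leq s_m$ and interpolate. This gives
\[
\phi(2r)\leq\phi(2s_m)\leq\phi(s_{\lfloor m/k\rfloor})\approx \tfrac{k}{m}\approx k\,\phi(s_{m+1})\leq k\,\phi(r),
\]
up to the constants $a,b$ from $\phi\asymp s$ and a factor $\frac{m+1}{m}$. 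So $\phi$ is $L$-doubling with $L$ depending on $k,a,b$.

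For (i), one direction is the estimate above. For the converse, suppose $\phi$ is $L$-doubling. Pick $j$ with $2^j\geq$ something, and iterate to get $\phi(2^jr)\leq L^j\phi(r)$. Then aim to show $s_{kn}\leq\frac12 s_n$ for a suitable $k$. Suppose instead that $2s_{kn}>s_n$. Then $\frac an\leq\phi(s_n)\leq\phi(2s_{kn})\leq L\phi(s_{kn})\leq\frac{Lb}{kn}$, which fails once $k>Lb/a$. The catch is that the bounds $a,b$ and the doubling inequality hold only for large $n$, so $s^{\mult k}\leq\frac12 s$ holds only for large $n$. I handle the finitely many initial terms by enlarging $k$: since $s_{kn}\to0$ as $k\to\infty$ for each fixed $n$, finitely many conditions can be met.

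For (ii), the same computation with $k=2$ works if we take $\phi$ exactly with $\phi(s_n)=\frac1n$. The issue is getting the constant exactly $2$ rather than $2b/a$. So I would define $\phi$ on $[s_{n+1},s_n]$ by a careful interpolation, for instance piecewise constant right-continuous, $\phi(r)=\frac1{n}$ for $r\in[s_n,s_{n-1})$. Then check that $\phi(2r)\leq 2\phi(r)$ using $2s_{2n}\leq s_n$: for $r\in[s_m,s_{m-1})$, $2r\geq 2s_m$, and we need $2r<s_{\lceil m/2\rceil -1}$, i.e.\ the index of $2r$ is at least about $m/2$, giving $\phi(2r)\leq\frac{2}{m}$. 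Conversely, if such a $\phi$ exists with $\phi(s_n)=\frac1n$ exactly, then $2s_{2n}>s_n$ would give $\frac1n\leq\phi(2s_{2n})\leq\frac{2}{2n}$. This is borderline, so strictness must come from the step structure, or we settle for the inequality holding eventually, matching "for $r$ small enough."

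For (iii), I would smooth the step function into a continuous strictly increasing one, for example piecewise linear through the points $(s_n,\frac1n)$ in a suitable scale, and verify strict $2$-doubling. \textbf{The main obstacle} is the exact constant $2$ in (ii)/(iii): the interpolation must respect the inequality $2s_{2n}\leq s_n$ uniformly on whole intervals rather than only at the nodes. I would first try the step function with off-by-one index bookkeeping, and then interpolate linearly in $\log r$.
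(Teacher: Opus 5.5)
Your argument for (i) is correct, and more careful than the paper's ``straightforward''. For $\phi$ doubling $\Rightarrow$ $s$ doubling, the index beyond which $2s_{kn}\le s_n$ holds does not depend on $k$ once $k>Lb/a$, so enlarging $k$ to handle the remaining $n$ is legitimate.

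Your step function $\phi=\frac1n$ on $[s_n,s_{n-1})$ also proves the forward half of (ii). For $r\in[s_m,s_{m-1})$ you have $2r<2s_{m-1}\le s_{\lceil m/2\rceil-1}$, hence $\phi(2r)\le 1/\lceil m/2\rceil\le \frac2m=2\phi(r)$. This is the paper's idea with finer steps; the paper uses the staircase $2^{-n}$ on $[s_{2^{n+1}},s_{2^n})$.

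You leave the converse half of (ii) unproved, and it cannot be proved: it is false, although the paper calls it straightforward. Your ``borderline'' computation is exactly the obstruction. Note also that normalizing $\phi(s_n)=\frac1n$ is not harmless here, since $\approx$ does not preserve the constant $2$. Concretely, $\micr(r)=-1/\log r$ satisfies $\micr(2r)\le 2\micr(r)$ for $r\le\frac14$. Take $s=(1,0.9,2^{-3},2^{-4},\dots)$. Then $n\micr(s_n)=1$ for $n\ge3$, so $\micr\asymp s$, yet $s_2>\frac12 s_1$, so $s$ is not $2$-doubling.

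The failure is not only about initial terms. Put $s_n=2^{-t_n}$ with $t_n=n$, except $t_{N+j}=\frac32N+\frac{j}{N+1}$ for $0\le j\le N$, with $N$ ranging over a sparse set. Then $n\micr(s_n)=n/t_n\in[\frac12,2]$, but $s_{2N}>\frac12 s_N$ for infinitely many $N$. All that follows from a $2$-doubling $\phi\asymp s$ is that $s$ is doubling, by (i).

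For (iii) there is a genuine gap: no interpolation through the nodes $(s_n,\frac1n)$ can be strictly $2$-doubling in general. Take $s_n=\frac1n$, which is $2$-doubling with $2s_{2n}=s_n$. Every gauge with $\psi(s_n)=\frac1n$ then has $\psi(2s_{2n})=\psi(s_n)=2\psi(s_{2n})$ at arbitrarily small $r=s_{2n}$, whatever scale you interpolate in. Moreover, $r\mapsto 2r$ does not map the intervals $[s_{n+1},s_n]$ onto intervals of the same family, so even non-strict doubling of such an interpolation is unclear.

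The missing idea is to give up the exact node values. The paper applies Lemma~\ref{doubling11} to the gauge from (ii), in three steps:
\begin{enumerate}
\item Pass to the staircase $2^{-n}(1-2^{-n})$ on $[r_n,r_{n-1})$, where $r_n=\inf\{r:\phi(r)\ge 2^{-n}\}$; its consecutive ratios are strictly below $2$.
\item Add $r$ to obtain strict monotonicity.
\item Interpolate linearly on the dyadic grid $\{2^{-n}\}$, which $r\mapsto 2r$ respects, so the strict node inequalities $a_n<2a_{n+1}$ propagate to the linear pieces.
\end{enumerate}
The resulting gauge is $\approx\phi$, hence still $\asymp s$.
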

\begin{proof}
(i) is straightforward.

(ii) Let $a_n=s_{2^n}$. Then $a_{n+1}\leq\frac12 a_n$, since $s$ is $2$-doubling.
Therefore, the gauge $\phi$ defined by
\[
  r\in[a_{n+1},a_n)\ \mapsto\ \psi(r)=2^{-n}
\]
is $2$-doubling and clearly $\phi\asymp s$. The backwards implication is straightforward.

(iii) Apply (ii) and Lemma~\ref{doubling11}.
\end{proof}
\begin{exs}\label{ex4.2}
Recall that $\log r$ denotes the binary logarithm of $r$.

(i) Let $\micr(r)=-\frac{1}{\log r}$. Then $\micr\asymp\geom$ (cf.~Example~\ref{exs1}(i)). 

(ii) Let $\phi(r)=r^\alpha$. Then $\phi\asymp\harm^{1/\alpha}$ (cf.~Example~\ref{exs1}(ii)).

(iii) Let $\phi(r)=1/\log\log\frac1r$. Then $\phi\asymp\eps^{2^n}$ for all $\eps\in(0,1)$
 (cf.~Example~\ref{exs1}(iv)).

\end{exs}
\begin{defn}
\begin{itemyze}
\item Let $s\in\SEQ$ and let $I\subs\nset$. A sequence of sets $\seq{E_n:n\in I}$ is
\emph{$s$-fine} if $\diam E_n<s_n$ for all $n\in I$.
\item A set $E$ is \emph{Hausdorff $S$-dominated} if
for each $s\in S$ and $\eps>0$ there is a set $I\subs\nset$ such that
$\sum_{n\in I}\frac1n<\eps$ and an $s$-fine cover $\seq{E_n:n\in I}$ of $E$.
\item The family of all Hausdorff $S$-dominated sets in $X$ is denoted by
$\DDD(S)$ or $\DDD_X(S)$.
\end{itemyze}
\end{defn}

If $S$ is multiplicatively complete, then there is a particularly simple equivalent definition of Hausdorff dominated sets.
Recall that $\suma=\{I\subs\nset:\sum_{n\in I} \frac1n<\infty\}$ is an ideal on the set $\nset$ that is called the\emph{ summable ideal}.
We will need the following partition of $\nset$. 
For $k\in\Nset$ define
\begin{equation}\label{partition}
  \parti_k=\{2^k(2i+1):i\in\Nset\}.
\end{equation}
So $\parti_k$ are the odd multiples of $2^k$. It is clear that $\{\parti_k:k\in\Nset\}$
is a partition of $\nset$ into infinite sets.

\begin{prop}\label{critA}
Let $S\subs\SEQ$ be multiplicatively complete. Then $E\in\DDD(S)$ if and only if
for each $s\in S$ there is $I\in\suma$ and a large $s$-fine cover $\seq{E_n:n\in I}$ of $E$.
\end{prop}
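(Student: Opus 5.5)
The proof is a direct argument in both directions. For the forward direction we reuse the interleaving trick from Theorem~\ref{thmideals}(i) together with the partition $\{\parti_k\}$ from \eqref{partition}. For the reverse direction we use the fact that a large cover survives the removal of finitely many of its members.

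\emph{Reverse implication.} Let $s\in S$ and $\eps>0$. By hypothesis there are $I\in\suma$ and a large $s$-fine cover $\seq{E_n:n\in I}$ of $E$. Since $\sum_{n\in I}\frac1n<\infty$, there is $N$ such that $I'=I\cap[N,\infty)$ satisfies $\sum_{n\in I'}\frac1n<\eps$. Each $x\in E$ lies in $E_n$ for infinitely many $n\in I$, so in particular for some $n\in I'$. Hence $\seq{E_n:n\in I'}$ is still an $s$-fine cover of $E$. As $s$ and $\eps$ were arbitrary, $E\in\DDD(S)$. Multiplicative completeness is not needed for this direction.

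\emph{Forward implication.} Let $E\in\DDD(S)$ and $s\in S$. Since $S$ is multiplicatively complete, for each $k\in\Nset$ there is $s'\in S$ with $s'\leq s^{\mult 2^{k+2}}$. Applying Hausdorff domination to $s'$ with $\eps=2^{-k}$ yields $I_k\subs\nset$ with $\sum_{i\in I_k}\frac1i<2^{-k}$ and a cover $\seq{E^k_i:i\in I_k}$ of $E$ with $\diam E^k_i<s_{2^{k+2}i}$.

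Now define
\[
  I=\{2^k(2i+1):k\in\Nset,\ i\in I_k\}
\]
and put $F_n=E^k_i$ for $n=2^k(2i+1)\in I$; this is well defined because $n$ determines $k$ and $i$ uniquely. We check three things.
\begin{itemyze}
\item The sequence is $s$-fine: $\diam F_n<s_{2^{k+2}i}\leq s_{2^k(2i+1)}=s_n$, since $2^{k+2}i\geq 2^k(2i+1)$ for $i\geq1$ and $s$ is decreasing.
\item $I\in\suma$: we have
\[
  \sum_{n\in I}\frac1n=\sum_k\sum_{i\in I_k}\frac{1}{2^k(2i+1)}\leq\sum_k 2^{-k}\sum_{i\in I_k}\frac1i<\sum_k 4^{-k}<\infty.
\]
\item The cover is large: every $x\in E$ is covered by some $E^k_i$ for each $k$, and these correspond to distinct indices in the disjoint sets $\parti_k$.
\end{itemyze}
This proves the forward implication.

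There is no real obstacle. The only point requiring care is choosing the shift $2^{k+2}$ together with the slices $\parti_k$, so that the diameters stay below $s_n$ and the reindexed reciprocal sums remain summable.
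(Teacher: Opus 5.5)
Your proof is correct and is essentially the paper's argument: for the forward direction you reindex $s^{\mult{2^{k+2}}}$-fine covers along the odd multiples $\parti_k$ of $2^k$, and for the reverse direction you drop an initial segment of a large cover so that the reciprocal sum becomes small. You are also a bit more careful than the paper: you use multiplicative completeness explicitly to get $s'\in S$ with $s'\leq s^{\mult{2^{k+2}}}$, you write the reciprocal-sum estimate as an inequality, and your choice $\eps=2^{-k}$ in place of the paper's uniform bound $1$ changes nothing.
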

\begin{proof}
The forward implication:
For each $k\in\Nset$ let $J_k\subs\nset$ be such that $\sum_{n\in J_k}\frac1n<1$
and $\seq{D_i:i\in J_k}$ an $s^{\mult{2^{k+2}}}$-fine cover of $E$.
Let $I_k=\{2^k(2i+1):i\in\Nset\}\subs\parti_k$.
For each $n\in I_k$ let $E_n=D_i$ where $n=2^k(2i+1)$ and consider the sequence
$\seq{E_n:n\in I_k}$. It is a cover of $E$. Moreover, for each $n\in I_k$
\[
  \diam E_n=\diam E_{2^k(2i+1)}=\diam D_i<s_i^{\mult{2^{k+2}}}=s_{s^{k+2}i}
  \leq s_{2^k(2i+1)}=s_n,
\]
hence $\seq{E_n:n\in I_k}$ is $s$-fine.

Finally, let $I=\bigcup_{k\in\Nset}I_k$ and consider the sequence $\seq{E_n:n\in I}$.
By the above, it is $s$-fine and it is clearly a large cover of $E$.
Moreover,
\[
  \sum_{n\in I}\frac1n=\sum_{k\in\Nset}\sum_{n\in I_k}\frac1n=
  \sum_{k\in\Nset}2^{-k}\sum_{n\in J_k}\frac1n<\sum_{k\in\Nset}2^{-k}<\infty,
\]
so $I\in\suma$.

The reverse implication is easy: Let $s\in S$ and $\eps>0$. There is $J\in\suma$ and an
$s$-fine large cover $\seq{E_n:n\in J}$. Let $m$ be such that
$\sum_{n\in J\cap[m,\infty)}\frac1n<\eps$ and let $I=J\cap[m,\infty)$.
Since $\seq{E_n:n\in J}$ is a large cover, so is $\seq{E_n:n\in I}$.
\end{proof}

\begin{thm}\label{domhaus1}
Let $s\in\SEQ$ and $\phi$  be a gauge.
If $\phi\asymp s$, then $\DDD(\closs)=\NN(\hm^\phi)$.
\end{thm}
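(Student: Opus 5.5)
We may assume that $\phi(s_n)=\frac1n$ does not hold exactly. Instead we only use that there are $a,b>0$ and $n_0$ with $a\leq n\phi(s_n)\leq b$ for all $n\geq n_0$. Since $\closs=\{s^{\mult k}:k\in\nset\}$, a set $E$ is Hausdorff $\closs$-dominated if and only if for every $k$ and $\eps>0$ there are $I\subs\nset$ with $\sum_{n\in I}\frac1n<\eps$ and a cover $\seq{E_n:n\in I}$ of $E$ with $\diam E_n<s_{kn}$. We prove the two inclusions separately. Each is a direct conversion between such index sets and Hausdorff sums.

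For $\DDD(\closs)\subs\NN(\hm^\phi)$, fix $k\geq n_0$ and $\eps>0$, and take $I$ and a cover $\seq{E_n:n\in I}$ as above. Since $kn\geq n_0$ for every $n$, monotonicity of $\phi$ gives
\[
  \sum_{n\in I}\phi(\diam E_n)\leq\sum_{n\in I}\phi(s_{kn})\leq\sum_{n\in I}\frac{b}{kn}<\frac{b\eps}{k}.
\]
All diameters are below $s_k$, so $\hm^\phi_{s_k}(E)\leq b\eps/k$. Letting $\eps\to0$ and using that $s_k\to0$ as $k\to\infty$ gives $\hm^\phi(E)=0$.

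For $\NN(\hm^\phi)\subs\DDD(\closs)$, fix $k$ and $\eps>0$, and set $c=a/(2k)$. Pick a small $\eta>0$ to be fixed later. Since $\hm^\phi(E)=0$, take a cover $\seq{F_j:j\in\nset}$ of $E$ with $\sum_j\phi(\diam F_j)<\eta$ and all diameters tiny. Sets of diameter $0$ (and small values of $\phi$) are handled by replacing the weights with $w_j=\max(\phi(\diam F_j),\eta2^{-j})$. Then $\sum_j w_j<2\eta$, and all $w_j\leq 2^{-i_0}$ for a prescribed large $i_0$. Group the indices into levels $L_i=\{j:w_j\in(2^{-i-1},2^{-i}]\}$ for $i\geq i_0$, and note that
\[
  \abs{L_i}\leq 2^{i+1}\sum_{j\in L_i}w_j .
\]
To the members of $L_i$ we assign injectively indices $n$ in the block $[2^ic,2^{i+1}c)$. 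This block has about $2^ic$ integers, which suffices once $\eta\ll c$. The blocks are disjoint across levels, so the whole assignment is injective. For such $n$ we have $kn<a2^i$. If we had $\diam F_j\geq s_{kn}$ with $kn\geq n_0$ (guaranteed since $i\geq i_0$), then
\[
  w_j\geq\phi(\diam F_j)\geq\phi(s_{kn})\geq\frac{a}{kn}>2^{-i},
\]
a contradiction. Hence $\diam F_j<s_{kn}$, so the resulting family indexed by $I$ is $s^{\mult k}$-fine. Finally,
\[
  \sum_{n\in I}\frac1n\leq\sum_i\frac{\abs{L_i}}{2^ic}\leq\frac2c\sum_j w_j<\frac{4\eta}{c},
\]
which is below $\eps$ for $\eta$ small.

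The main obstacle is this second inclusion. The Hausdorff cover must be turned into an \emph{injective} indexing in which each set receives an index $n$ with $\frac1n$ comparable to $\phi(\diam F_j)$. The dyadic level and block bookkeeping above is meant to resolve the collisions. The other care point is the degenerate cases (diameter $0$, and only finitely many $n<n_0$ where the asymptotic bound $\phi\asymp s$ is unavailable), which the weights $w_j$ and the choice of $i_0$ are designed to absorb.
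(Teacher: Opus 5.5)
Your proof is correct. Its skeleton matches the paper's: the first inclusion bounds $\phi(s_{kn})$ by a constant times $\frac1{kn}$, and the second re-indexes a small $\hm^\phi$-cover so that $\frac1n$ is comparable to $\phi$ of the diameter. The key re-indexing step, however, is done differently.

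\textbf{The paper's route.} It first normalizes to $\phi(s_n)=\frac1n$ with $\phi$ strictly increasing. It then inflates the diameters to positive $\delta_n>\diam D_n$ with $\sum_n\phi(\delta_n)<\frac{\eps}{2k}$, and sorts them nonincreasingly so that $\phi(\delta_n)<\frac{\eps}{2kn}$. Finally it uses the explicit index $\gamma_n=\lfloor 1/(2k\phi(\delta_n))\rfloor+n$. This index is strictly increasing, so injectivity comes for free, and it satisfies both $\frac1{\gamma_n}\leq 2k\phi(\delta_n)$ and $\gamma_n<\frac1{k\phi(\delta_n)}$. These two bounds give the smallness of $\sum_{n\in I}\frac1n$ and the $s^{\mult k}$-fineness in one line each.

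\textbf{Your route.} You bucket the weights dyadically and send each level to its own block $[2^ic,2^{i+1}c)$, paying with a counting argument. In exchange you need no normalization of $\phi$ (you carry $a$, $b$, $n_0$ throughout) and no sorting. Degenerate diameters are absorbed by the floor $\eta2^{-j}$ rather than by inflating to $\delta_n$.

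Your loose phrase ``about $2^ic$ integers'' does close. With $\eta\leq c/8$ you get $\abs{L_i}<2^{i+2}\eta\leq 2^{i-1}c$. Hence $L_i=\emptyset$ unless $2^ic>2$, and in that case the block contains at least $2^ic-1>2^{i-1}c$ integers. You should also fix $i_0$ with $2^{i_0-1}a\geq n_0$ before choosing $\eta$, which your ``prescribed large $i_0$'' already suggests.

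For the first inclusion, the paper goes through the large-cover reformulation (some $I\in\suma$ with a large $s$-fine cover). Your direct estimate $\hm^\phi_{s_k}(E)\leq b\eps/k$ makes that detour unnecessary.
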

\begin{proof}
Suppose without loss of generality that $\phi(s_n)=\frac1n$ and $\phi$ is strictly increasing.

First, suppose that $E\in\DDD(\closs)$. There is $I\in\suma$ and an $s$-fine large cover
$\seq{E_n:n\in I}$ of $E$. Therefore,
\[
  \sum_{n\in I}\phi(\diam E_n)\leq\sum_{n\in I}\phi(s_n)\leq\sum_{n\in I}\tfrac1n<\infty
\]
which is enough for $\hm^\phi(E)=0$.

Now suppose that $\hm^\phi(E)=0$. Fix for the moment $k\in\nset$ and $\eps>0$.
There is a large cover $\{D_n:n\in\nset\}$ such that
$\sum_{n\in\nset}\phi(\diam D_n)<\frac{\eps}{2k}$. For each $n$ choose $\del_n>\diam D_n$
such that $\sum_{n\in\nset}\phi(\del_n)<\frac{\eps}{2k}$. The latter yields $\del_n\to 0$ and
since all $\del_n$'s are positive, we may rearrange them so that they form a nonincreasing
sequence. Therefore, $\phi(\del_n)<\frac{\eps}{2kn}$ for all $n$. Let
\[
  \gamma_n=\left\lfloor\frac{1}{2k\phi(\del_n)}\right\rfloor+n
\]
where $\lfloor \ \rfloor$ denotes the integer part.
The mapping $n\mapsto\gamma_n$ is clearly increasing and \emph{a fortiori} one-to-one.
Let $I=\{\gamma_n:n\in\nset\}$. For $j\in I$ define $E_j=D_n$ where $\gamma_n=j$. We have
\begin{equation}\label{ClaimA}
\sum_{j\in I}\tfrac1j=\sum_{n\in\nset}\tfrac1{\gamma_n}
  \leq\sum_{n\in\nset}2k\phi(\del_n)=2k\sum_{n\in\nset}\phi(\del_n)<2k\tfrac{\eps}{2k}
  =\eps.
\end{equation}
Our next goal is to prove that $\seq{E_j:j\in I}$ is $s^\mult{k}$-fine.
So let $j\in I$ and let $j=\gamma_n$. Since $\phi(\del_n)<\frac{\eps}{2kn}\leq\frac{1}{2kn}$,
we have $n<\frac{1}{2k\phi(\del_n)}$, hence $\gamma_n<\frac{1}{k\phi(\del_n)}$, hence
$\phi(\del_n)<\frac{1}{k\gamma_n}$. It follows that
\[
  \phi(\diam E_j)=\phi(\diam D_n)\leq\phi(\del_n)<\tfrac{1}{k\gamma_n}=\tfrac{1}{kj}
  =\phi(s_{kj}).
\]
Since $\phi$ is increasing, it follows that $\diam E_j<s_{kj}=s_j^{\mult k}$, as required.
Together with \eqref{ClaimA}
we proved that $\seq{E_j:j\in I}$ satisfies the assumptions of Lemma~\ref{critA}
and the proof is complete.
\end{proof}
So, in particular $\DDD(\closs)$ is a $G_\del$-based \si ideal.
If $S$ is multiplicatively complete, then $\DDD(S)=\bigcap_{s\in S}\DDD(\closs)$.
Thus, we have the following.
\begin{thm}\label{HNmain}
Let $S\subs\SEQ$ be multiplicatively complete.
\begin{enum}
\item $\DDD(S)$ is a \si ideal,
\item if $S$ is countably determined, then $\DDD(S)$ is $G_\del$-based.
\end{enum}
\end{thm}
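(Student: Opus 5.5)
The plan is to reduce everything to single sequences by means of the identity noted just before the statement, and then use Theorem~\ref{domhaus1}. First I would verify that for multiplicatively complete $S$ we have $\DDD(S)=\bigcap_{s\in S}\DDD(\closs)$.

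The inclusion ``$\supseteq$'' is trivial, since $s\in\closs$.

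For ``$\subseteq$'', let $E\in\DDD(S)$, $s\in S$, $k\in\nset$ and $\eps>0$. Multiplicative completeness gives $s'\in S$ with $s'\leq s^{\mult k}$. A set $I$ with $\sum_{n\in I}\frac1n<\eps$ together with an $s'$-fine cover indexed by $I$ is then also $s^{\mult k}$-fine. Hence $E\in\DDD(\closs)$.

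For each $s\in\SEQ$, choose a gauge $\phi_s\asymp s$; one exists by the listed elementary properties. Theorem~\ref{domhaus1} then gives $\DDD(\closs)=\NN(\hm^{\phi_s})$, so
\[
  \DDD(S)=\bigcap_{s\in S}\NN(\hm^{\phi_s}).
\]

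(i) Each $\NN(\hm^{\phi_s})$ is a \si ideal, because $\hm^{\phi_s}$ is a countably subadditive outer measure. An intersection of \si ideals is again a \si ideal, so $\DDD(S)$ is a \si ideal.

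(ii) Suppose $S'\para S$ with $S'$ countable. The natural first attempt is to argue that $S'\para S$ implies $\DDD(S')=\DDD(S)$, exactly as was remarked for $\DD$. This monotonicity holds: if $s'\leq s$, then any $s'$-fine cover indexed by $I$ is also $s$-fine.

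One subtlety is that $S'$ need not be multiplicatively complete. This does not matter, because I would never need the displayed formula for $S'$ itself. Instead, for each $s\in S$ pick $s'\in S'$ with $s'\leq s$. Then $\DDD(S)\subseteq\bigcap_{s'\in S'}\DDD(\clos{s'})\subseteq\bigcap_{s\in S}\DDD(\closs)=\DDD(S)$. The first inclusion holds because each $s'\in S'$ dominates some element of $S$, and $S$ is multiplicatively complete.

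So $\DDD(S)$ is a countable intersection of ideals $\NN(\hm^{\phi})$. Now let $E\in\DDD(S)$, and write $\phi_1,\phi_2,\ldots$ for the countably many gauges involved. For each $\phi_i$ and each $m$, take a cover of $E$ by open sets with $\sum\phi_i(\diam)<1/m$; enlarging sets slightly to open ones is allowed by right-continuity of $\phi_i$. Taking the union of each such cover and then intersecting over all $i$ and $m$ yields a $G_\del$ set $G\supseteq E$ with $\hm^{\phi_i}(G)=0$ for every $i$. Therefore $G\in\DDD(S)$.

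The main obstacle is the bookkeeping in (ii): making sure the countable determining family interacts correctly with the intersection formula even when $S'$ itself is not multiplicatively complete. The comparison $S'\para S$ handles this.
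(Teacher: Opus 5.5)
Your proof is correct and follows the paper's route. The paper also obtains the theorem from the identity $\DDD(S)=\bigcap_{s\in S}\DDD(\closs)$ for multiplicatively complete $S$, combined with Theorem~\ref{domhaus1}, which makes each $\DDD(\closs)=\NN(\hm^\phi)$ a $G_\del$-based \si ideal; you simply supply details the paper leaves implicit, namely the verification of that identity and the reduction to the countable family via $\DDD(S)=\bigcap_{s'\in S'}\DDD(\clos{s'})$.
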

\begin{coro}\label{sharp7}
Let $s\in\SEQ$, $\phi\asymp s$ and $\alpha>1$.
Then $\NN(\hm^\phi)\subs\DD(\closs)\subs\bigcap_{\alpha>1}\NN(\hm^{\phi^\alpha})$.
\end{coro}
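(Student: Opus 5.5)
The two inclusions will be proved separately. The first, $\NN(\hm^\phi)\subs\DD(\closs)$, follows from Theorem~\ref{domhaus1}. Since $\phi\asymp s$, we have $\NN(\hm^\phi)=\DDD(\closs)$, so it suffices to show $\DDD(\closs)\subs\DD(\closs)$. Let $E\in\DDD(\closs)$ and fix $k$. Taking $\eps<1$, there is an $I\subs\nset$ with $\sum_{n\in I}\frac1n<\eps$ and an $s^{\mult k}$-fine cover $\seq{E_n:n\in I}$ of $E$. We extend this to an indexing over all of $\nset$ by setting $E_n=\emptyset$ for $n\notin I$. The empty set has diameter $0<s^{\mult k}_n$, so the extended sequence is an $s^{\mult k}$-fine cover. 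If empty sets are not acceptable as cover members, we can instead use singletons from $E$, or duplicate members of the cover, since each $s_n>0$. Hence $E\in\DD(\closs)$.

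For the second inclusion we fix $\alpha>1$ and show $\DD(\closs)\subs\NN(\hm^{\phi^\alpha})$. By Theorem~\ref{versus1} (i)$\Rightarrow$(ii), it suffices to check that $\phi^\alpha\circ s\in\el1$. This is immediate from $\phi\asymp s$: for all but finitely many $n$ we have $n\phi(s_n)\leq b$, so $\phi(s_n)^\alpha\leq b^\alpha n^{-\alpha}$, and $\sum n^{-\alpha}<\infty$ because $\alpha>1$. We also note that $\phi^\alpha$ is a gauge: it is non-decreasing and right-continuous, vanishes only at $0$, and $r\mapsto r^\alpha$ is continuous and increasing. Intersecting over all $\alpha>1$ gives the claim.

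Neither step presents a real obstacle. The only point needing care is the formal detail in the first inclusion, namely that a cover indexed by a subset $I\subs\nset$ can be padded to a full $s$-fine sequence. Without this padding, the Hausdorff-dominated notion, which is only stronger, would not fit the definition of $\DD$ literally.
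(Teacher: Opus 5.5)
Your proof is correct and is the paper's argument: the first inclusion comes from Theorem~\ref{domhaus1} together with the trivial inclusion $\DDD(\closs)\subs\DD(\closs)$, and the second from Theorem~\ref{versus1} applied to the gauge $\phi^\alpha$, with $\phi^\alpha\circ s\in\el1$ following from $n\phi(s_n)\leq b$ exactly as you check. One small caveat on your padding aside: empty sets or singletons work, but placing a copy of an earlier member $E_m$ at a later index $n>m$ need not satisfy $\diam E_m<s_n$, so drop the duplication option.
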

\begin{proof}
The inclusion $\NN(\hm^\phi)\subs\DD(\closs)$
follows immediately from Theorem~\ref{domhaus1} and the trivial inclusion
$\DDD(\closs)\subs\DD(\closs)$.
Since $\phi^\alpha\circ s\in\el1$, the latter inclusion follows from Theorem~\ref{versus1}.
\end{proof}
\begin{thm}\label{sharp8}
Let $s\in\SEQ$, $\phi\asymp s$ and $\alpha>1$. Then
\begin{enum}
\item $\NN(\hm^\phi_{\el2})\subsetneqq\DD_{\el2}(\closs)\subsetneqq\bigcap_{\alpha>1}
\NN(\hm_{\el2}^{\phi^\alpha})$.
\item If $\phi$ is $L$-doubling for $L<2$, then
$\NN(\hm^\phi_{\Rset})\subsetneqq\DD_{\Rset}(\closs)\subsetneqq\bigcap_{\alpha>1}
\NN(\hm_{\Rset}^{\phi^\alpha})$.
\end{enum}
\end{thm}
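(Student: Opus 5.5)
The inclusions themselves are given by Corollary~\ref{sharp7}, in $\el2$ and in $\Rset$ alike. So the plan is to produce, in each of the two settings, a witness for each strict inclusion. The key observation is that $\phi\asymp s$ gives $\phi(s_n)\geq a/n$ for almost all $n$. Hence $\phi\circ s\notin\el1$, while $\phi^\alpha\circ s\in\el1$ for every $\alpha>1$.

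\emph{First strict inclusion.} In $\el2$ I would apply Theorem~\ref{versus1}, implication (iii)$\Rightarrow$(i), in contrapositive form. Since $\phi\circ s\notin\el1$, it yields a Cantor cube $X$ and a set $E\in\DDs_X(\closs)$ with $E\notin\NNs(\hm^\phi)$, so in particular $\hm^\phi(E)>0$. This $E$ is $\closs$-dominated but not $\hm^\phi$-null, and embedding $X$ isometrically into $\el2$ as in the proof of Theorem~\ref{versus1} transfers it to $\el2$. In $\Rset$, an $L$-doubling gauge with $L<2$ is $2$-doubling. Theorem~\ref{versus3}, implication (ii)$\Rightarrow$(i) in contrapositive form, then yields a ($\sigma$-compact) $\closs$-dominated subset of $\Rset$ with positive $\hm^\phi$ measure.

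\emph{Second strict inclusion.} I would introduce an intermediate gauge
\[
  \psi(r)=\phi(r)\,g(\phi(r)),\qquad g(t)=\bigl(\log\tfrac1t\bigr)^{-2}\quad(t \text{ small}),
\]
modified away from $0$ so that it is a gauge. Three properties are needed.
\begin{enum}
\item $\psi\circ s\in\el1$. Since $g$ is increasing near $0$ and $\phi(s_n)\leq b/n$, we get $\psi(s_n)\lesssim 1/(n\log^2 n)$, which is summable.
\item $\phi^\alpha\prec\psi$ for every $\alpha>1$. Indeed $\psi/\phi^\alpha=\phi^{1-\alpha}g(\phi)\to\infty$ as $r\to0+$.
\item In the $\Rset$ case, $\psi$ is $L'$-doubling for some $L'<2$. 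This holds because $g(\phi(2r))/g(\phi(r))\to1$ when $\phi(2r)\leq L\phi(r)$ and $\phi\to0$.
\end{enum}
Using Lemma~\ref{doubling11} (and, for $\el2$, continuity tweaks as in the proof of Theorem~\ref{versus1}), I would replace $\psi$ by an equivalent continuous, strictly increasing gauge, strictly $2$-doubling in the $\Rset$ case, and put $r_n=\psi^{-1}(2^{-n})$.

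For $\el2$, take $C=\Cube(2,r)$. Lemma~\ref{cubes1} and the usual upper estimate give $0<\hm^\psi(C)<\infty$. For $\Rset$, take the symmetric Cantor set $\cube(r)$, which is well defined because $r_{n+1}<\frac12 r_n$; Lemma~\ref{cubes2}(ii) gives $\frac14\leq\hm^\psi(\cube(r))\leq1$. In either case the witness has $\sigma$-finite $\hm^\psi$ measure, so $\phi^\alpha\prec\psi$ gives $\hm^{\phi^\alpha}=0$ for all $\alpha>1$. On the other hand, $\psi\circ s\in\el1$ together with Theorem~\ref{versus1}(i)$\Rightarrow$(ii) gives $\DD(\closs)\subs\NN(\hm^\psi)$. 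Since the witness has positive $\hm^\psi$ measure, it is not $\closs$-dominated. For $\el2$, embed $C$ isometrically as before.

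The main obstacle is the construction and regularity of $\psi$: it must sit strictly between $\phi$ and every $\phi^\alpha$ with $\psi\circ s$ summable, and in the real-line case it must stay strictly $2$-doubling. This is exactly where the hypothesis $L<2$ is used. The remaining steps are direct applications of the earlier results.
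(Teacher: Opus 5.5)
Your proof is correct and follows essentially the paper's argument. The paper's intermediate gauges (Lemma~\ref{2gauges2}) are exactly $\phi/(\log\frac1\phi)^{\beta}$, and its separating sets are likewise a Cantor cube in $\el2$ and a symmetric Cantor set $\cube(r)$ of positive finite $\hm^{\psi}$-measure in $\Rset$. Your use of a single gauge, with the direct estimate $\psi(s_n)\lesssim 1/(n\log^2 n)$ from $\phi(s_n)\leq b/n$, is a clean streamlining, and it is the estimate actually needed here, since $\phi\circ s\notin\el1$. Two cosmetic points: in the paper's convention ($\phi\prec\psi$ iff $\psi/\phi\to0$) your relation should be written $\psi\prec\phi^\alpha$; and in the $\el2$ case, continuity of $\psi$ is best justified by noting that $\phi\asymp s$ lets you take $\phi$ itself continuous and strictly increasing, because the $\psi^*$ device of Theorem~\ref{versus1} does not yield continuity.
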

\begin{proof}
(i) The inclusion $\NN(\hm^\phi)\subs\DD(\closs)$
has been just proved.
The opposite inclusion $\NN(\hm^\phi)\supseteq\DD(\closs)$ fails
by Theorem~\ref{versus1}, since $\phi\asymp s$ and thus $\phi\circ s\notin\el1$.

By Lemma~\ref{2gauges2}, there are gauges $\zeta\prec\xi$ such that
$\zeta\prec\xi\prec\phi^\alpha$ for all $\alpha>1$ and
$\zeta\circ s\in\el1$.
Therefore, Theorem~\ref{versus1} yields $\DD(\closs)\subs\NN(\hm^\zeta)$.

Clearly $\NN(\hm^\zeta)\subs\NN(\hm^\xi)$ and since $\hm^\xi(\el2)>0$,
by Lemma~\ref{2gauges}, $\NN(\hm^\zeta)\neq\NN(\hm^\xi)$.
In summary, we have
\begin{equation}\label{annoying}
\DD(\closs)\subs\NN(\hm^\zeta)\subsetneqq\NN(\hm^\xi)
  \subs\bigcap_{\alpha>1}\NN(\hm{\phi^\alpha})
\end{equation}
which yields the second inclusion in (i). The proof of (i) is complete.

(ii) The inclusion $\NN(\hm^\phi_{\Rset})\subsetneqq\DD_{\Rset}(\closs)$ follows at once
from Theorem~\ref{versus3}.
The inclusion $\DD_{\Rset}(\closs)\subs\bigcap_{\alpha>1}\NN(\hm_{\Rset}^{\phi^\alpha})$
follows from part (i).
To prove $\DD_{\Rset}(\closs)\neq\bigcap_{\alpha>1}\NN(\hm_{\Rset}^{\phi^\alpha})$
we tweak the proof of (i):
Since $\phi$ is $L$-doubling for some $L<2$, by Lemma~\ref{2gauges2}(iii),
both $\zeta$ and $\xi$ are $2$-doubling and by Lemma~\ref{doubling11}, we may assume
they are actually strictly $2$ doubling, continuous and increasing.
By Lemma~\ref{cubes2}(ii), there is a symmetric Cantor set $\cube(r)\subs\Rset$ such that
$0<\hm^\xi(\cube(r))<\infty$, therefore $\hm^\zeta(\cube(r))=0$.
It follows that $\cube(r)$ witnesses
$\DD_{\Rset}(\closs)\neq\bigcap_{\alpha>1}\NN(\hm_{\Rset}^{\phi^\alpha})$.
\end{proof}

\section{Additively complete sets}\label{sec:shift}
\begin{defn}\label{def:shift}
\begin{itemyze}
\item For $s\in\SEQ$ and $k\in\nset$ let
$s^{\shift k}=\seq{s_{n+k}:n\in\nset}$ be the \emph{additive $k$-shift} of $s$.
Write $s^+$ instead of $s^{\shift1}$.
\item A set $S\subs\SEQ$ is \emph{additively complete} if
$\forall s\in S\ \exists s'\in S\ s'\leq s^+$. Note that every multiplicatively complete set is additively complete,
but, as we shall see, not vice versa.
\item For $S\subs\SEQ$ we define the \emph{shift-completion} of $S$ to be
$\wh S=\{s^{\shift k}:s\in S,k\in\nset\}$.
It is clear that $\wh S$ is additively complete and $S\subs\wh S$. Note that $\clos S\leq\wh S$.
For $s\in\SEQ$ write $\whs$ in place of $\wh{\{s\}}$.
Note that $\whs=\{s^{\shift k}:k\in\nset\}$.
\end{itemyze}
\end{defn}

In this section we study $\DD(S)$ and $\DDs(S)$ for additively complete sets $S$
and in particular $\DD(\whs)$ and $\DDs(\whs)$.
Note that since $\clos S\leq \wh S$, we have $\DD(\clos S)\subs\DD(\wh S)$.
We start with proving that if $S$ is additively complete, then $\DDs(S)$ is a \si ideal,
while $\DD(S)$ does not have to.
\begin{thm}\label{thm:shift1}
If $S\subs\SEQ$ is additively complete, then $\DDs(S)$ is a \si ideal.
\end{thm}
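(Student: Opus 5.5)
The plan is to exploit compactness: a compact set that admits an $s$-fine cover admits a finite one. Closure under subsets is trivial, so the task is to show that a countable union of \si compact $S$-dominated sets is again contained in a \si compact $S$-dominated set. Let $A_k\in\DDs(S)$, $k\in\nset$, and pick \si compact $S$-dominated sets $B_k\supseteq A_k$. Write each $B_k$ as an increasing union of compact sets. Every compact subset of an $S$-dominated set is $S$-dominated, so we obtain countably many compact $S$-dominated sets $K_j$, $j\in\nset$, whose union $B=\bigcup_j K_j$ contains $\bigcup_k A_k$. Since $B$ is \si compact, it suffices to prove that $B$ is $S$-dominated.

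First key step (finite covers). If $K$ is compact and $S$-dominated, then for every $s\in S$ and every $m\in\nset$ there is a finite cover of $K$ by sets $E_n$, $m\le n<m+p$, with $\diam E_n<s_n$. To see this, use additive completeness $m$ times to obtain $s'\in S$ with $s'\le s^{\shift m}$. Take an $s'$-fine cover of $K$. Following the remark after the definition, the sets may be taken open; if necessary, enlarge each slightly using the strict inequality $\diam E_n<s'_n$. Compactness then yields a finite subcover indexed by $n\le p$. Re-index it as $E'_{n+m}=E_n$, so that $\diam E'_{n+m}<s'_n\le s_{n+m}$.

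Second key step (concatenation). Fix $s\in S$. By recursion on $j$, choose consecutive finite blocks of indices $[m_j,m_{j+1})$, with $m_1=1$. Given $m_j$, apply the first step to $K_j$ with $m=m_j$; this produces a finite cover of $K_j$ indexed within $[m_j,m_{j+1})$ with diameters below $s_n$. Unused indices get the empty set. The union of all these blocks is an $s$-fine cover of $B$. Hence $B$ is $S$-dominated and $B\in\DDs(S)$.

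The main obstacle is the openness/compactness detail in the first step: we must ensure that an $s'$-fine cover can be replaced by an open $s'$-fine cover with the same strict bounds. We handle this by replacing each $E_n$ with its open $\eps_n$-neighbourhood, where $\eps_n$ is small enough that the diameter stays below $s'_n$. This works because the inequality is strict. Finally, note that $\DD(S)$ itself need not be closed under unions here, precisely because the finite-block concatenation requires compactness.
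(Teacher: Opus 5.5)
Your proof is correct and is essentially the paper's argument: reduce to countably many compact $S$-dominated sets, use additive completeness and compactness to get, for each one, a finite open cover with $\diam U_i<s_i$ indexed by a block of consecutive integers $[k_{n-1},k_n)$, and concatenate the blocks into a single $s$-fine cover. The only blemish is a harmless off-by-one: applying additive completeness $m$ times yields indices $m+1,\dots,m+p$ rather than $m,\dots,m+p-1$, and your convention of putting the empty set at unused indices already covers this.
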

\begin{proof}
Let $\{A_n:n\in\nset\}\subs\DDs(S)$ and $A=\bigcup_{n\in\nset}A_n$. Let $s\in S$.
We may sup\-pose all $A_n$ be compact. For each $n\in\Nset$ find recursively
$k_n$ and an open cover $\seq{U_i:k_{n-1}\leq i< k_n}$ of $A_n$  such that $\diam U_i<s_i$
for all $i$.
(Such a cover exists because $A_n$ is $\whs$-dominated and
it is finite because $A_n$ is compact.) The sequence $\seq{U_i:i\in\nset}$ is an $s$-fine
cover of $A$, as required.
\end{proof}
\begin{exs}\label{exs2}
(i) \emph{Nanoscopic sets} (Example~\ref{exs1}(iv)).
Let $S$ consist of all sequences of the form $\seq{\eps^{2^n}:n\in\nset}$ where
$\eps\in(0,1)$. Let $\mathfrak s=\seq{2^{-2^n}:n\in\nset}$ and note that
$S\para\wh{\mathfrak s}$.
Hence $\DDs(S)=\DDs(\wh{\mathfrak s})$ is a \si ideal.
We already know from Example~\ref{exs1}(iv) that $\DD(S)$ is not an ideal, so $S$ is not multiplicatively complete.

(ii) \emph{Picoscopic sets} (Example~\ref{exs1}(v)).
Let $S$ consist of all sequences of the form $\seq{\eps^{n!}:n\in\nset}$ where
$\eps\in(0,1)$. $S$ is neither multiplicatively complete nor additively complete and
and none of the families $\DD(S)$ and $\DDs(S)$ is an ideal.
\end{exs}

Consider now an arbitrary $s\in\SEQ$ and the two sets $\closs,\whs\subs\SEQ$ induced by $s$.
The families $\DD(\closs)$, $\DDs(\closs)$ and $\DDs(\whs)$ are \si ideals
(while by Example~\ref{exs2}(iv), $\DD(\whs)$ does not have to be an ideal).
Since $\closs\leq \whs$, we have $\DD(\closs)\subs\DD(\whs)$, but in general
we expect that $\DD(\closs)$ and $\DD(\whs)$ differ, and likewise
$\DDs(\closs)$ and $\DDs(\whs)$. The following theorem, which is a continuation of
Theorem~\ref{versus1}, shows that Hausdorff measures do not distinguish the two families.

\begin{thm}\label{versus3.5}
Let $\phi$ be a gauge and $s\in\SEQ$. The following are equivalent.
\begin{enum}
\item For every metric space $X$, $\DD_X(\whs)\subs\NN(\hm^\phi)$,
\item for every metric space $X$, $\DD_X(\closs)\subs\NN(\hm^\phi)$.
\end{enum}
\end{thm}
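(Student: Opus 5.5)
The implication (i)$\Rightarrow$(ii) is immediate. Since $\closs\leq\whs$, every $\closs$-dominated set is $\whs$-dominated, so $\DD_X(\closs)\subs\DD_X(\whs)\subs\NN(\hm^\phi)$.

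For (ii)$\Rightarrow$(i) I will go through Theorem~\ref{versus1}. By that theorem, (ii) is equivalent to $\phi\circ s\in\el1$. So it suffices to show that $\phi\circ s\in\el1$ implies $\DD_X(\whs)\subs\NN(\hm^\phi)$ for every metric space $X$. I will copy the argument of Theorem~\ref{versus1}(i)$\Rightarrow$(ii), using additive shifts in place of multiplicative ones. Let $E$ be $\whs$-dominated and fix $k\in\nset$. There is an $s^{\shift k}$-fine cover $\seq{E_n:n\in\nset}$ of $E$, i.e.\ $\diam E_n<s_{n+k}\leq s_{k+1}$. Since $\phi$ is non-decreasing,
\[
  \hm^\phi_{s_{k}}(E)\leq\sum_{n\in\nset}\phi(\diam E_n)\leq\sum_{n\in\nset}\phi(s_{n+k})=\sum_{m>k}\phi(s_m).
\]
This is a tail of the convergent series $\sum\phi(s_m)$. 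Given $\del>0$, choose $k$ large enough that $s_k<\del$. Then $\hm^\phi_\del(E)\leq\hm^\phi_{s_k}(E)$, since $\hm^\phi_\del$ is monotone in $\del$. Letting $k\to\infty$ gives $\hm^\phi_\del(E)=0$ for every $\del$, hence $\hm^\phi(E)=0$.

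There is no real obstacle. The only point to check is that the equivalence (i)$\Leftrightarrow$(ii) of Theorem~\ref{versus1} holds for arbitrary metric spaces $X$, which it does as stated. Everything else is the tail estimate. The estimate is in fact sharper with additive shifts than with multiplicative ones, because $\sum_n\phi(s_{n+k})$ is exactly a tail, whereas before one had to bound $\sum_n\phi(s_{kn})$ by a tail.
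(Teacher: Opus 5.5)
Your proof is correct and essentially identical to the paper's. Both prove (i)$\Rightarrow$(ii) from $\closs\leq\whs$, and (ii)$\Rightarrow$(i) by using Theorem~\ref{versus1} to reduce to $\phi\circ s\in\el1$ and then bounding $\hm^\phi_{s_k}(E)$ by the tail $\sum_{m>k}\phi(s_m)$ via an $s^{\shift k}$-fine cover; you are slightly more careful than the paper in passing from $\hm^\phi_{s_k}$ to $\hm^\phi_\del$ by monotonicity in $\del$.
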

\begin{proof}
(i)$\Rightarrow$(ii) from the remarks preceding the theorem.

(ii)$\Rightarrow$(i) By Theorem~\ref{versus1}, it is enough to show that
if $\phi\circ s\in\el1$, then $\DD_X(\whs)\subs\NN(\hm^\phi)$.
So suppose that $E\subs X$ is $\whs$-dominated.
For $k\in\nset$ let $\{E_{n}\}$ be an $s^{\shift k}$-fine cover of $E$. Then
$\hm_{s_{k}}^{\phi}(E)\leq\sum_{n\in\nset}\phi(s_{n}^{\shift k})\leq\sum_{n\geq k}\phi(s_{n})$
and letting $k\to\infty$ yields $\hm^{\phi}(E)=0$.
\end{proof}

The following important lemma from \cite[Lemma 3.8]{MR3568089} is a basic
tool for \si compact dominated sets.
We adjust it to our setting, but omit the proof, because it is almost
identical to that of~\cite[Lemma 3.8]{MR3568089}.
\begin{lem}\label{2mic}
Let $s\in\SEQ$, $m\in\Nset$. If $E\subs X$ is a compact set such that
for~every~$k$ there are $s^{\shift k}$-fine sequences of sets
$\seq{E_n^i:n\in\nset}$, $i<m$, such that $\{E_m^i:i<m,n\in\nset\}$ is a cover of $E$,
then $E$ is a union of $m$ compact $\whs$-dominated sets.
\end{lem}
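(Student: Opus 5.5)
The plan is a compactness argument in the hyperspace of compact subsets of $E$, with the Hausdorff metric $\hausm$. (In the statement the cover should read $\{E_n^i:i<m,n\in\nset\}$.) We may assume every $E_n^i$ is open, by enlarging it slightly while keeping $\diam E_n^i<s_{n+k}$.

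First I turn each hypothesis into finite data. Fix $j\in\nset$ and apply the hypothesis with $k=j$, getting open $s^{\shift j}$-fine sequences $\seq{E_n^{i,j}:n\in\nset}$, $i<m$, that together cover $E$. Since $E$ is compact, there is $N_j$ such that $\{E_n^{i,j}:i<m,\ n<N_j\}$ still covers $E$. Only finitely many strict inequalities $\diam E_n^{i,j}<s_{n+j}$ are involved, so there is a margin $\del_j>0$ with $\diam E_n^{i,j}+2\del_j<s_{n+j}$ for all $i<m$ and $n<N_j$. Put
\[
  F^i_j=E\cap\bigcup_{n<N_j}\clos{E_n^{i,j}}.
\]
Each $F^i_j$ is compact and $E=\bigcup_{i<m}F^i_j$ for every $j$. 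A set is trivially covered by a sequence if it is empty, so empty $F^i_j$ cause no trouble.

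Next I pass to limits. The hyperspace of compact subsets of $E$ is compact, so a diagonal subsequence $j_0<j_1<\cdots$ makes $F^i_{j_t}\to K^i$ in $\hausm$ for every $i<m$. The key refinement is to thin this subsequence inductively so that the convergence rate is tied to the margins. Having chosen $j_t$, I choose $j_{t+1}$ so large that every later term of the subsequence lies within $\del_{j_t}/4$ of its limit. Consequently $\hausm(K^i,F^i_{j_t})\leq\del_{j_t}/4<\del_{j_t}$ for every $t$ and every $i$.

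The sets $K^i$ are compact, and they cover $E$. Indeed, let $x\in E$. For each $t$ some $i$ satisfies $x\in F^i_{j_t}$, so by pigeonhole a single $i$ works for infinitely many $t$. Then $x$ belongs to the Hausdorff limit, i.e. $x\in K^i$.

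It remains to show that each $K^i$ is $\whs$-dominated. Fix $k$ and choose $t$ with $j_t\geq k$. Then $K^i$ lies in the $\del_{j_t}$-neighbourhood of $F^i_{j_t}$. That set is covered by the $\del_{j_t}$-neighbourhoods of the sets $E_n^{i,j_t}$, $n<N_{j_t}$, each of diameter at most
\[
  \diam E_n^{i,j_t}+2\del_{j_t}<s_{n+j_t}\leq s_{n+k}.
\]
Padding the remaining indices with empty sets gives an $s^{\shift k}$-fine cover of $K^i$.

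The main obstacle is exactly this last estimate. A bare Hausdorff limit only places $K^i$ near $F^i_{j'}$ for large $j'$, but the margin $\del_{j'}$ of that cover may be far smaller than the distance $\hausm(K^i,F^i_{j'})$. The controlled thinning, which fixes the margins before choosing the subsequence, is what I expect to make the argument work. This is the step I would verify most carefully against \cite[Lemma 3.8]{MR3568089}.
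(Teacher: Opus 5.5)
Your suspicion about the last step is justified, and the gap is real. Choosing $j_{t+1}$ so that all later terms lie within $\del_{j_t}/4$ of the limit only gives $\hausm(K^i,F^i_{j_u})\leq\del_{j_t}/4$ for $u>t$. So the distance at index $j_{t+1}$ is controlled by the margin of the \emph{previous} index. The margin $\del_{j_{t+1}}$ is fixed only after $j_{t+1}$ is chosen, and it can be arbitrarily small compared with $\hausm(K^i,F^i_{j_{t+1}})$. Thinning never changes $K^i$. Hence whether $\hausm(K^i,F^i_j)<\del_j$ holds for infinitely many $j$ is a property of the given covers, which no choice of subsequence can create.

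It can fail, because the witnesses in the hypothesis may be adversarial. Let $\{x_1,\dots,x_p\}$ be a finite $\frac1j$-net of $E$, and let $\seq{D^i_n}$ be the sequences supplied for the shift $j+p$. Put $E^i_n=\{x_n\}$ for $n\leq p$ and $E^i_{p+n}=D^i_n$. These sequences are $s^{\shift j}$-fine and cover $E$, and (taking $N_j>p$) every $F^i_j$ contains the net. Then every $K^i$ equals $E$, so your argument, if valid, would prove that $E$ itself is $\whs$-dominated.

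That conclusion is false in general, and in fact the statement as written cannot be proved. Compact $\whs$-dominated sets are closed under finite unions. Given $k$, take a finite subcover, of length $N$, of an open $s^{\shift k}$-fine cover of $K^0$, and follow it by an $s^{\shift{(k+N)}}$-fine cover of $K^1$; this is the argument of Theorem~\ref{thm:shift1}. So ``$E$ is a union of $m$ compact $\whs$-dominated sets'' just says $E\in\DD(\whs)$.

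Now take the compact set $X$ built in the proof of Theorem~\ref{s-nots*}, e.g.\ for $s_n=2^{-2^n}$. For every $k$ it is covered by two $s^{\shift k}$-fine sequences of intervals, and that proof shows $X$ has no $s$-fine cover at all. So ``compact'' has to be dropped from the conclusion.

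The weaker version is all that Theorem~\ref{s-nots*} uses, and it follows from your first step alone:
\begin{enum}
\item Pick levels $j_1<j_2<\cdots$ with $j_{t+1}\geq j_t+N_{j_t}$.
\item Concatenating the $i$-th finite subcovers of levels $j_t,j_{t+1},\dots$ yields a single $s^{\shift{j_t}}$-fine sequence covering $\bigcup_{u\geq t}F^i_{j_u}$.
\item Hence $A^i=\bigcap_t\bigcup_{u\geq t}F^i_{j_u}$ is $\whs$-dominated, and $\bigcup_{i<m}A^i=E$ by your pigeonhole argument.
\end{enum}
These $A^i$ need not be compact, and by the above they cannot in general be chosen compact.
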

The lemma lets us prove the analogy to Lemma~\ref{nowik22} for additively complete
sets.
\begin{lem}\label{nowik44}
Let $S\subs\SEQ$ be additively complete and $E\subs(\Cset)^m$.
Then $E\in\DDs(S)$ if and only if
$T^m(E)\in\DDs(S)$.
\end{lem}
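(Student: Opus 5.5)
Both implications are handled at the level of a single compact set, with Lemma~\ref{2mic} as the main tool for the reverse direction. First recall that $\DDs(S)$ consists of sets contained in a \si compact $S$-dominated set. Thus $E\in\DDs(S)$ means $E\subs\bigcup_j K_j$ with each $K_j$ compact and $S$-dominated (a subset of a dominated set is dominated). By Theorem~\ref{thm:shift1}, $\DDs(S)$ is a \si ideal, so it suffices to treat one compact set at a time.

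\emph{Forward implication.} Suppose $E\subs\bigcup_j K_j$ with $K_j\subs(\Cset)^m$ compact and $S$-dominated. Since $T^m$ is $1$-Lipschitz and continuous, each $T^m(K_j)$ is compact. It is also $S$-dominated, because dominated sets are preserved by $1$-Lipschitz maps. Hence $T^m(E)\subs\bigcup_j T^m(K_j)\in\DDs(S)$.

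\emph{Reverse implication.} Suppose $T^m(E)\subs\bigcup_j L_j$ with $L_j\subs[0,1]^m$ compact and $S$-dominated. Then $E\subs\bigcup_j (T^m)^{-1}(L_j)$, and each $K=(T^m)^{-1}(L)$ is compact because $T^m$ is continuous on the compact space $(\Cset)^m$. It therefore suffices to show $K\in\DDs(S)$ for a single compact $S$-dominated $L$. Fix $s\in S$; the goal is $K\in\DDs(\whs)$ in a form that can be intersected over $s$.

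For each $k$, additive completeness provides $s'\in S$ with $s'\leq s^{\shift k}$, so $L$ has an $s^{\shift k}$-fine cover $\seq{F_n}$. Lemma~\ref{3sets} is stated for the one-dimensional $T$; applying it coordinatewise shows that $(T^m)^{-1}(F_n)$ is covered by $3^m$ sets $F_{n,i}$, $i<3^m$, each with $\diam F_{n,i}\leq\diam F_n<s_{n+k}$ (with the max metric on the product). For each $i<3^m$, the sequence $\seq{F_{n,i}:n\in\nset}$ is then $s^{\shift k}$-fine, and together these sequences cover $K$. This is exactly the hypothesis of Lemma~\ref{2mic} with $3^m$ sequences. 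That lemma yields that $K$ is a union of $3^m$ compact $\whs$-dominated sets, and hence $K\in\DDs(\whs)$.

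\emph{Main obstacle.} The difficulty is that the decomposition given by Lemma~\ref{2mic} depends on $s$, whereas we need $K\in\DDs(S)$, i.e.\ a single \si compact cover working for all $s\in S$ simultaneously. I would try to make the pieces canonical. Inspecting the proof of Lemma~\ref{2mic} (the proof of \cite[Lemma 3.8]{MR3568089}), the pieces are obtained by a compactness/limit argument on $K$, and the $F_{n,i}$ from Lemma~\ref{3sets} arise from a fixed combinatorial splitting of $(\Cset)^m$ according to dyadic branches. I would exploit this to fix the $3^m$ pieces independently of $s$, namely the closed sets of points whose binary expansions fall into a prescribed one of the $3^m$ ``types''. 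Each such piece is then $s$-dominated for every $s\in S$ simultaneously, giving $K\in\DDs(S)$.

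If canonicity fails, the fallback is to argue directly that $K\cap P$ is $S$-dominated for each canonical piece $P$. The relevant fact is that the restriction of $T^m$ to $P$ has the diameter-control property of Lemma~\ref{3sets} with a single set instead of three, which would give domination of $K\cap P$ for every $s\in S$ at once.
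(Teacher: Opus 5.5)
\textbf{There is a genuine gap.} Up to your conclusion $K\in\DDs(\whs)$ you follow the paper's proof. Additive completeness gives $s^{\shift k}$-fine covers of $L$ for every $k$. Lemma~\ref{3sets} turns each of these into $3^m$ sequences that are $s^{\shift k}$-fine and cover $K$. Lemma~\ref{2mic} then makes $K$ a union of $3^m$ compact $\whs$-dominated sets.

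The trouble is what you do next. The ``main obstacle'' you raise does not exist, and the remedies you propose are unsupported (one is false), so the proof as written never reaches $K\in\DDs(S)$. You do not need one decomposition of $K$ that serves all $s\in S$. Because $K$ is compact, $K\in\DDs(S)$ just means that $K$ itself is $S$-dominated, i.e.\ that for each $s\in S$ \emph{separately} $K$ has an $s$-fine cover. You already have this:
\begin{enumerate}
\item For fixed $s$, Theorem~\ref{thm:shift1} applied to the additively complete set $\whs$ turns your finite union into $K\in\DDs(\whs)\subs\DD(\whs)$.
\item So $K$ has an $s^{\shift1}$-fine cover, which is $s$-fine since $s^{\shift1}\leq s$.
\item Since $s\in S$ was arbitrary, the compact set $K$ is $S$-dominated, hence $K\in\DDs(S)$.
\end{enumerate}
This is exactly the paper's final line $E\in\bigcap_{s\in S}\DDs(\whs)=\DDs(S)$, which is valid because $E$ was first reduced to a compact set.

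\textbf{Drop the ``canonical pieces'' idea.} The sets from Lemma~\ref{3sets} depend on the set being pulled back. The pieces from Lemma~\ref{2mic} come from a limit argument on the given covers. Nothing makes them independent of $s$, so you have no reason to expect fixed ``types'' to be $S$-dominated.

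\textbf{The fallback is false.} Already for $m=1$, suppose a piece $P$ had the single-set diameter control you describe. Applying it to $F=\{T(x),T(y)\}$ with $x,y\in P$ gives $d(x,y)\leq\abs{T(x)-T(y)}$. But if $x\neq y$ and $n=\abs{x\wedge y}$, then $d(x,y)=2^{-n}\geq\abs{T(x)-T(y)}$. Equality holds only if, from position $n$ on, one of the two points is constantly $1$ and the other constantly $0$. Among three points of $P$, two would then end in the same constant value, which contradicts this for that pair. So every such $P$ has at most two points, and such pieces cannot cover $K$ in general.
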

\begin{proof}
The forward implication is straightforward. Backward implication:
We may assume that $E$, and hence also $T^m(E)$, are compact.
Let $s\in S$ and let $\seq{E_n:n\in\nset}$ be an $s$-fine cover of $T^m(E)$.
Then, by Lemma~\ref{3sets}, $E$ is covered by $3^m$ many $s$-fine families.
Since $S$ is additively complete, the argument holds also for all $s^{\shift k}$.
Thus, by Lemma~\ref{2mic}, $E$ is covered by $3^m$ compact $\whs$-dominated sets
and therefore is, by Theorem~\ref{thm:shift1}, compact and $\whs$-dominated.
Hence $E\in\bigcap_{s\in S}\DDs(\whs)=\DDs(S)$.
\end{proof}
%
We now look into the following interrelated questions:
\begin{itemyze}
\item For which sequences is $\DD(\whs)$ a \si ideal?
\item For which sequences is $\DD(\whs)=\DD(\closs)$?
\item For which sequences is $\DDs(\whs)=\DDs(\closs)$?
\end{itemyze}
We have only a few partial answers. Theorems~\ref{s-nots*} and \ref{s-nots*2}
address the first and second question. Both say that if $s$ converges fast enough, then
$\DD(\whs)\neq\DD(\closs)$, while Theorem~\ref{thmwarpj} addresses the third question
saing that if $s$ converges slow enough, then $\DDs(\whs)=\DDs(\closs)$.

The proof of the following theorem is very similar to the proof of
\cite[Lemma 3.10]{MR3568089}, where $s_n=2^{-2^n}$.
%
\begin{thm} \label{s-nots*}
    Let $s\in \SEQ$ be a sequence such that 
    \begin{equation} \label{1sn>}
    \fmany n\quad s_n \geq 4^ns_{n+1}.
    \end{equation}
Then 
\begin{enum}
\item $\DD(\whs)$ is not an ideal;
\item $\DD(\clos s)\neq\DD(\whs)$.
\end{enum}
\end{thm}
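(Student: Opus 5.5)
We will show (ii) first, using it to obtain (i). Since $\DD(\closs)$ is a \si ideal (Theorem~\ref{thmideals}, as $\closs$ is multiplicatively complete) and $\DD(\closs)\subs\DD(\whs)$, both statements follow once we exhibit two sets $A,B\in\DD(\whs)$ with $A\cup B\notin\DD(\whs)$. Indeed, then $\DD(\whs)$ is not an ideal, and since $\DD(\closs)$ is an ideal, at least one of $A,B$ lies in $\DD(\whs)\setminus\DD(\closs)$. We will work in $\Cset$, or more conveniently in a Cantor-type set built from $s$, and then transfer to $\Rset$ via Lemma~\ref{nowik22} or a $1$-Lipschitz image if needed. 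The plan mirrors \cite[Lemma 3.10]{MR3568089}.

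\emph{Construction.} Using \eqref{1sn>}, fix $n_0$ with $s_n\geq 4^ns_{n+1}$ for $n\geq n_0$. We build a compact set $K=\bigcap_n\bigcup\{I_p:p\in\prod_{j<n}m_j\}$ as a tree of intervals. At level $n$, each interval has length about $s_n/2^{n}$ and contains $m_n\approx 2^n$ subintervals of length comparable to $s_{n+1}$, pairwise separated by gaps larger than $s_{n+1}$. This is possible because $s_n\geq 4^ns_{n+1}$ leaves room for $2^n$ well-separated children. We then split $K$ into $A$ and $B$ by alternating at even and odd levels: $A$ consists of the branches that, at even levels $n$, always choose one fixed child (say the first), while $B$ does the same at odd levels.

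\emph{Domination.} For $A$ and a shift $s^{\shift k}$, at the odd levels the branching is large, but at the even levels $A$ is ``thin''. Hence $A$ is covered by $\prod_{j<n,\ j\text{ odd}}m_j$ intervals of level $n$ for $n$ even. We choose, level by level, to cover $A$ by level-$n$ pieces assigned to indices $n'$ with $s^{\shift k}_{n'}$ above the piece diameter. The counting works because the number of pieces at level $2j$ grows like $2^{j^2}$, while the decay of $s$ beyond level $2j$ allows these pieces to be indexed by the integers between roughly $2j$ and $2j+1$ shifted by $k$. The exact indexing needs care: we use a single level $n$ large relative to $k$, covering $A$ by level-$n$ intervals of diameter about $s_n$ and assigning them indices $n-k'$. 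The same argument, with parities swapped, applies to $B$.

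\emph{Non-domination of $A\cup B$, the main obstacle.} For $s$ itself with no shift, we must show that no $s$-fine cover of $K$ exists. The argument is a counting one. A set of diameter $<s_n$ can meet at most one level-$(n-1)$ interval's worth of structure, because the gaps exceed $s_n$. Inductively, we build a branch avoiding $E_1,\dots,E_n$: at level $n$ there are $\approx 2^n$ children, and $E_n$ meets at most one of them. So there is always a surviving child, and the sets $E_1,E_2,\dots$ cannot cover $K$. Tuning the geometry so that this gap/diameter inequality holds at every level, while the thin-level covering estimates for $A$ and $B$ still work under all additive shifts, is the delicate step. I would first try matching the interval lengths exactly to $s_n$ and $s_{n+1}$ and use the factor $4^n$ to absorb both the $2^n$ children and the separation requirement.
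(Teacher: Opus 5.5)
Your reduction is sound and is the one the paper uses: two sets in $\DD(\whs)$ whose union is not in $\DD(\whs)$ give (i), and (ii) follows because $\DD(\closs)$ is an ideal contained in $\DD(\whs)$. The gap is in the construction, and tuning constants will not fix it.

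In your homogeneous tree, distinct level-$n$ pieces are more than $s_n$ apart; you need this for the non-domination step. So a set of diameter $<s_{i+k}$ meets at most one level-$(i+k)$ piece. Let $N_n$ be the number of level-$n$ pieces meeting $A$, and let $\mu$ be the probability measure on $A$ that splits mass equally among children, so each such piece has mass $1/N_n$. An $s^{\shift k}$-fine cover $\seq{E_i}$ of $A$ would give $\mu^*(E_i)\le 1/N_{i+k}$, and hence $1\le\sum_{n>k}1/N_n$. But branching at every odd level gives $N_n\ge 2^{\lfloor n/2\rfloor}$ (you even have $N_{2j}\approx 2^{j^2}$), so this inequality fails for large $k$. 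Thus $A\notin\DD(\whs)$, and likewise $B$.

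This is exactly where your ``Domination'' paragraph breaks. There is one set per index, and only the indices $i<n-k$ allow $\diam E_i\ge s_n$. So superexponentially many pieces of a single level cannot all be ``assigned indices $n-k'$''. The same estimate with a factor $2$ shows that no decomposition of such a $K$ into two $\whs$-dominated sets exists at all.

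Independently, your $A\cup B$ is a proper subset of $K$: a branch making non-first choices at some even level and at some odd level lies in neither. So showing that $K$ has no $s$-fine cover says nothing about $A\cup B$.

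The missing idea is an inhomogeneous tree in which each scale $s_j$ is used only twice per generation. The paper gives the node $I_j$ exactly $2^{j+1}$ children (indexed by $T_j$) and sets $\diam I_{2k}=\diam I_{2k+1}=s_{k+1}$. Every generation $S_n$ is a block of consecutive integers, so it consists of intervals of lengths $s_a,s_a,s_{a+1},s_{a+1},\dots,s_b,s_b$. Choosing $n$ deep enough, $X$ is covered for every $k$ by two $s^{\shift k}$-fine sequences. Lemma~\ref{2mic} (a compactness argument) then makes $X$ a union of two compact $\whs$-dominated sets, with no explicit split needed.

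Non-domination of $X$ itself is a diagonal argument with the bookkeeping function $f$:
\begin{enumerate}
\item The current interval $I_j$ misses $J_1,\dots,J_{f(j)}$.
\item Its $2^{j+1}$ children are $s_{f(j)+1}$-separated, so $J_{f(j)+1},\dots,J_{2^{j+1}}$ meet at most $2^{j+1}-f(j)$ of them.
\item A surviving child $I_p$ therefore misses $J_1,\dots,J_{f(p)}$, because $f(p)=2^{j+1}$.
\end{enumerate}
Condition~\eqref{1sn>} is what leaves room inside $I_j$ for these children and gaps.
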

\begin{proof}
Since $\DD(\whs)=\DD(\wh{s^{+n}})$, for any $n\in\nset$ we may assume that $s_n \geq 4^ns_{n+1}$ for all $n$ (if not, then we can change first few terms of the sequence). Let $T_{-1} = \{0,1\}$ and
$T_i=\{2^{i+1},2^{i+1}+1,\dots,2^{i+2}-1\}$ for $i\in\Nset$.
Let $S_0=T_{-1}$ and $S_{i+1}=\bigcup_{j\in S_i} T_j$ for $i\in\nset$.
    Observe that both sequences $(T_i)$ and $(S_i)$ form partitions of $\Nset$ and $T_n$ consists of $2^{n+1}$ numbers for $n\in\Nset$.
    Let $f \colon\Nset \to\Nset$ be given by the formula $f(k)=2^{i+1}$, where $i\in\Nset \cup \{-1\}$ is the unique number such that $k\in T_i$, so for $k>1$ $f(k)$ is the cardinality of $T_i$. In particular, for $k\in \{2^n,2^n+1,\dots,2^{n+1}-1\}$
    $f(k) = 2^n$, and so $f(2k)\geq k+1$ and $f(2k+1)\geq k+1$.
    
Let $I_{-1}= [0,3s_1]$ and $(I_n)_{n\in\Nset}$ be a sequence of closed intervals such that 
\begin{itemyze}
\item for all $n\in\Nset\cup\{-1\}$ and all $k\in T_n$, $I_k\subs I_n$, intervals $I_k$ are pairwise disjoint and the distance between them is not less than $s_{f(n)+1}$. In particular, if $i,j\in S_n$, $i \neq j$, then $I_i\cap I_j=\emptyset$.
\item $\diam I_{2k}=\diam I_{2k+1}=s_{k+1}$, for all $k\in\Nset$.
\end{itemyze}
Such a construction is possible because
for any $n\in\Nset$ we have, by (\ref{1sn>}),
\begin{align*}   
  \diam I_{2n} &=s_{n+1} \geq 
  4^{n+1}s_{n+2}=2\cdot 4^n s_{n+2}+2\cdot4^n s_{n+2} \\
  &> 2 \cdot 4^n s_{4^n+1}+ (2^{2n+1}-1)s_{n+2} \\
  &\geq 2\sum_{i=2^{2n}+1}^{2^{2n+1}} s_i +(2^{2n+1}-1)s_{n+2} 
  \geq \sum_{k\in T_{2n}} \diam I_k + (2^{2n+1}-1)s_{f(n)+1}
\end{align*}
and similarly, 
\begin{align*}
    \diam I_{2n+1} &= s_{n+1} > 2\sum_{i=2^{2n+1}+1}^{2^{2n+2}} s_i +(2^{2n+2}-1)s_{n+2} \\
    &\geq \sum_{k\in T_{2n+1}}\diam I_k +(2^{2n+2}-1)s_{f(n)+1}. 
\end{align*}
For $n\in\Nset$ let $X_n = \bigcup_{k\in S_n} I_k$ and $X = \bigcap_{n\in\Nset} X_n$.

Observe that $X$ is compact and for any $n\in\Nset$ $X$ is covered by finitely many intervals of length $s_k, s_k, s_{k+1}, s_{k+1},\dots,s_m,s_m$, where $k = \frac{\min S_n}{2}$, $m=\frac{\max S_n-1}{2}$. By Lemma \ref{2mic}, $X$ is a union of two sets from $\DD(\whs)$. 

We will now show that $X\notin\DD(\whs)$.  Let $\seq{J_n}$ be a sequence of intervals such that $\diam J_n<s_n$. Since $\diam J_1<s_1$ and the distance between $I_0$ and $I_1$ is not less than $s_1$, then $J_1$ can intersect only one of $I_0$ and $I_1$. Let $m\in\{0,1\}$ be such that $J_1\cap I_m=\emptyset$. Let $K_0=I_m$. Then $K_0\cap\bigcup_{i\leq f(m)}J_i=K_0\cap J_1=\emptyset$.

Now suppose that for $n\in\Nset$ we have defined a decreasing sequence of closed intervals $\seq{K_i:0\leq i\leq n}$ with $K_n=I_j$ for some $j\in S_n$ and $K_n\cap\bigcup_{i\leq f(j)} J_i=\emptyset$. 
$X_{n+1}\cap I_j$ is a union of $2^{j+1}$ intervals $I_i$ for $i\in T_j$ which are pairwise disjoint and the distance between them is not less than $s_{f(j)+1}$. Of course, these intervals are also disjoint with $\bigcup_{i\leq f(j)} J_i$. Since $\diam J_k<s_k\leq s_{f(j)+1}$ for $k\geq f(j)+1$ and $f(j)<2^{j+1}$, there are at least $f(j)$ intervals $I_i$ for $i\in T_j$ which are disjoint with $\bigcup_{l\leq 2^{j+1}} J_l$. Let $I_p$ be one of them. Put $K_{n+1} = I_p$. Since $f(p)=2^{j+1}$, we have $K_{n+1}\cap\bigcup_{i\leq f(p)} J_i = \emptyset$.

We have inductively constructed a sequence of intervals $\seq{K_n}$ such that $K_{n+1}\subs K_n$, all $K_n$ are intervals $I_t$ for some finite binary sequence $t$ and for some increasing sequence $\seq{k_n}$ tending to infinity we have $K_n \cap \bigcup_{i\leq k_n} J_i = \emptyset$. Consequently, 
$$
\emptyset \neq \bigcap_{n\in\Nset}K_n \subs X \setminus \bigcup_{i\in\nset} J_i,
$$
so the set $X$ is not covered by the sequence $\seq{J_i}$. Since $X$ is a union of two sets from $\DD(\whs)$, but does not belong to $\DD(\whs)$, this family is not an ideal. By Theorem \ref{thmideals}, $\DD(\clos s)$ is an ideal, so $\DD(\clos s)\neq\DD(\whs)$.
\end{proof}

%
\begin{thm} \label{s-nots*2}
Let $s\in \SEQ$ be a sequence such that
\begin{equation} \label{3sn>}
  \exists m\ \fmany n\in\Nset\quad s_{n} > 2^n s_{mn}.
\end{equation}
Then $\DD(\clos s)\neq\DD(\whs)$.
\end{thm}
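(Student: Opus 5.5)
The plan is to mimic the proof of Theorem~\ref{s-nots*}: construct a compact set $X$ (in $\Rset$, or first in a Cantor cube and then transfer it) that lies in $\DD(\whs)$ but not in $\DD(\clos s)$. Fix $m$ as in \eqref{3sn>}. Since changing finitely many terms of $s$ does not affect $\DD(\whs)$, and affects the relevant covers only in finitely many indices, I assume \eqref{3sn>} holds for all $n\geq1$. To show $X\notin\DD(\clos s)$ it suffices to defeat the single sequence $s^{\mult m}$, i.e.\ to show that no sequence $\seq{J_n}$ with $\diam J_n<s_{mn}$ covers $X$. The inequality $s_n>2^ns_{mn}$ says that a set of diameter $s_n$ can contain about $2^n$ points that are pairwise more than $s_{mn}$ apart. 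This is the room I intend to exploit.

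\emph{Construction.} I build a tree of closed intervals $I_t$, with the same bookkeeping as in Theorem~\ref{s-nots*}: a partition of $\Nset$ into blocks $T_j$, a level partition $S_n$, and an index function $f$. The intervals attached to the index $n$ have diameter comparable to (slightly below) $s_n$. Inside each $I_j$ I place roughly $2^{j}$ children, pairwise separated by at least $s_{m f(j)}$. Hypothesis \eqref{3sn>} is exactly what makes this packing fit, replacing the $4^n$ computation in Theorem~\ref{s-nots*}, so the first concrete task is to redo that displayed estimate with $s_n>2^ns_{mn}$. Let $X=\bigcap_n\bigcup_{k\in S_n}I_k$; it is compact.

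\emph{$X\in\DD(\whs)$.} For each $k$, each level $S_n$ of the construction is a finite family of intervals whose diameters are $s$-values with consecutive indices, each index used a bounded number of times. Exactly as in Theorem~\ref{s-nots*}, going deep enough in the tree gives, for every $k$, finitely many $s^{\shift k}$-fine sequences covering $X$. Lemma~\ref{2mic} then writes $X$ as a finite union of compact $\whs$-dominated sets, and Theorem~\ref{thm:shift1} shows that $X$ itself is $\whs$-dominated.

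\emph{$X\notin\DD(\clos s)$.} Let $\seq{J_n}$ be $s^{\mult m}$-fine. I run the diagonal argument of Theorem~\ref{s-nots*} and choose nested $K_n=I_{j_n}$ with $K_n\cap\bigcup_{i\leq f(j_n)}J_i=\emptyset$. At the inductive step, the sets $J_i$ with $i>f(j)$ have diameter $<s_{mi}\leq s_{m(f(j)+1)}$, smaller than the separation of the children. So each such $J_i$ meets at most one child, and there are more than $f(j)$ children while we only need to avoid the next block of indices. The intersection $\bigcap_n K_n$ is then a point of $X$ outside $\bigcup J_n$. I expect the main obstacle to be the bookkeeping that makes both halves compatible. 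The children must be numerous and separated at scale $s_{mn}$ to defeat $s^{\mult m}$, yet their diameters must still be indexable by consecutive $s^{\shift k}$-terms for the application of Lemma~\ref{2mic}. Tuning the block sizes $T_j$ against \eqref{3sn>} is where I expect most of the work to lie.
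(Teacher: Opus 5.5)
Your architecture is the paper's: a tree of intervals $I_j$ of diameter about $s_j$, the blocks $T_j$, the levels $S_n$, the function $f$, and the diagonal choice of nested $K_n$. But the packing estimate you defer is where the proof actually lives, and with your parameters it is false. To make the diagonal step work against $s^{\mult m}$, you separate the children of $I_j$ by $s_{mf(j)}$. With this bookkeeping, $f(j)$ can be as small as $(j+1)/2$ (namely for $j=2^{i+2}-1\in T_i$), whereas \eqref{3sn>} only gives room at the much finer scale $s_{mj}$. Concretely, $s=\geom$ satisfies \eqref{3sn>} with $m=3$. For $j=2^{i+2}-1$, the roughly $2^{j}$ to $2^{j+1}$ gaps of length $s_{3f(j)}=2^{-3(j+1)/2}$ have total length of order $2^{-j/2}$, far more than $\diam I_j<2^{-j}$. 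So the set you describe cannot be built. Even at scale $s_{mj}$, a single application of \eqref{3sn>} pays for only about $2^j$ gaps and guarantees nothing for the lengths of the children themselves.

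The missing idea, which is the paper's, is to apply the hypothesis twice and to target a larger multiplicative shift. One has $s_j>2^js_{mj}>2^j2^{mj}s_{m^2j}\geq 4\cdot 2^js_{m^2j}$. Once $2^{j+1}\geq m^2j$, each child has length $s_i\leq s_{2^{j+1}}\leq s_{m^2j}$. Hence $\diam I_j=s_j$ accommodates the $2^{j+1}$ children $I_i$, $i\in T_j$, separated by $s_{m^2j}$. The factor $2$ lost in $f(j)\geq\lceil j/2\rceil$ is then paid for by defeating $s^{\mult k}$ with $k>2m^2$ instead of $s^{\mult m}$: for an $s^{\mult k}$-fine sequence $\seq{J_n}$ and $n>f(j)$, one has $\diam J_n<s_{kn}\leq s_{k(f(j)+1)}<s_{m^2j}$. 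Your version can also be rescued by replacing $m$ with $2m$, which is legitimate since \eqref{3sn>} is monotone in $m$; you still need a second application of the hypothesis to fit the children.

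Finally, drop the detour through Lemma~\ref{2mic} and Theorem~\ref{thm:shift1} for the $\whs$ part. The inference ``compact and covered by finitely many $s^{\shift k}$-fine sequences for every $k$, hence $\whs$-dominated'' is contradicted by Theorem~\ref{s-nots*} itself. Its compact set has two such covers for every $k$, yet it is not even $s$-dominated. Instead use one interval per index with $\diam I_n=s_n$, as in the paper. Then a deep level $\{I_j:j\in S_n\}$, with $S_n$ a block of consecutive integers, is directly an $s^{\shift k}$-fine cover once indices are shifted by one.
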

\begin{proof}
Let $T_{-1} =\{0,1\}$ and $T_i=\{2^{i+1},2^{i+1}+1,\dots,2^{i+2}-1\}$ for $i\in\Nset$. Let $S_0=T_{-1}$ and $S_{i+1}=\bigcup_{j\in S_i} T_j$ for $i\in\nset$. Observe that both sequences $\seq{T_i}$ and $\seq{S_i}$ form partitions of $\Nset$ and $T_n$ consists, for $n\in\Nset$, of $2^{n+1}$ numbers. Define $f\colon\Nset\to\Nset$ by  $f(k)=2^{i+1}$, where $i\in\Nset\cup\{-1\}$ is the unique number such that $k\in T_i$, so for $k>1$, $f(k)$ is the cardinality of $T_i$. In particular, for $k\in\{2^n,2^n+1,\dots,2^{n+1}-1\}$
$f(k)=2^n$, and so $f(2k)\geq k+1$ and $f(2k+1)\geq k+1$. Therefore, $f(k)\geq\lceil\frac{k}{2}\rceil$.

Let $m, N \in \nset$ be such that $s_n > 2^ns_{mn}$ for all $n \geq N$. Let $K\geq N$ be such that for all $n\geq K$, $2^{n+1}\geq m^2n$. 
Let $I_{-1}$ be a closed interval that is long enough and $\seq{I_n:n\in\Nset}$ be a sequence of closed intervals such that 
\begin{itemyze}
\item for all $n\in\Nset\cup \{-1\}$ and all $k\in T_n$, $I_k\subs I_n$, intervals $I_k$ are pairwise disjoint and the distance between them is not less than $s_{m^2n}$ for $n>0$ and not less than $s_1$ for $n=-1$ or $n=0$. In particular, if $i,j\in S_n$, $i\neq j$, then $I_i \cap I_j=\emptyset$.
\item $\diam I_{n}= s_{n}$, for all $n\geq K$.
\end{itemyze}
Such a construction is possible because for any $n \geq K$ (for smaller $n$ we can adjust the length of $I_n$ accordingly), using  (\ref{3sn>}) twice, we get
\begin{align*}
  \diam I_{n} &= s_{n}>2^n s_{mn}>2^n\cdot2^{mn}s_{m(mn)}\geq 4\cdot 2^n s_{m^2n} \\
  &\geq 2^{n+1}s_{m^2n}+2^{n+1}s_{m^2n} \geq 2^{n+1}s_{2^{n+1}}+2^{n+1}s_{m^2n} \\
  &> \sum_{i=2^{n+1}}^{2^{n+2}-1} s_i 
        +(2^{n+1}-1)s_{m^2n} = \sum_{k\in T_{n}} \diam I_k+ (2^{n+1}-1)s_{m^2n}.
\end{align*}
For $n\in\Nset$ let $X_n=\bigcup_{k\in S_n}I_k$ and finally let $X=\bigcap_{n\in\Nset} X_n$.

Observe that $X$ is compact and for any $n\geq K$ it is covered by finitely many intervals with length equal to $s_k,s_{k+1},s_{k+2}, \dots,s_{l-1},s_l$, where $k=\min S_n$, $l=\max S_n$. Thus, $X\in \DD(\whs)$. 

We will now show that $X \notin \DD(\clos s)$.  Let $k > 2m^2$ and $\seq{J_n}$ be a sequence of intervals such that $\diam J_n< s_n$. We will show that $X\not\subs \bigcup_{i=1}^\infty J_{ki}$. Since $\diam J_k< s_k$ and the distance between $I_0$ and $I_1$ is not less than $s_1$, then $J_k$ can intersect only one of $I_0$ and $I_1$. Let $q\in \{0,1\}$ be such that $J_1 \cap I_q = \emptyset$. Let $K_0 = I_q$. Then $K_0 \cap \bigcup_{i\leq f(q)} J_{ki} = K_0 \cap J_k = \emptyset$.

Now suppose that for $n\in\Nset$ we have defined a decreasing sequence of closed intervals $\seq{K_i:0\leq i\leq n}$ with $K_n=I_j$ for some $j\in S_n$ and $K_n\cap \bigcup_{i\leq f(j)} J_{ki}=\emptyset$. 
$X_{n+1}\cap I_j$ is a union of $2^{j+1}$ intervals $I_i$ for $i\in T_j$ which are pairwise disjoint and the distance between them is not less than $s_{m^2j}$. Of course, these intervals are also disjoint with $\bigcup_{i\leq f(j)} J_{ki}$. For $n \geq f(j)+1$ we have 
\[
  \diam J_{kn}< s_{kn} \leq s_{k(f(j)+1)} \leq s_{2m^2\lceil \frac{j}{2} + 1 \rceil } < s_{m^2j}
\] 
and $f(j)<2^{j+1}$. Therefore, there are at least $f(j)$ intervals $I_i$ for $i\in T_j$ which are disjoint with $\bigcup_{n\leq 2^{j+1}} J_{kn}$. Let $I_p$ be one of them. Put $K_{n+1} = I_p$. Since $f(p)=2^{j+1}$, we have $K_{n+1} \cap \bigcup_{i\leq f(p)} J_{ki} = \emptyset$.

We have inductively constructed a sequence of intervals $\seq{K_n}$ such that $K_{n+1}\subs K_n$, all $K_n$ are intervals $I_t$ for some finite binary sequence $t$ and for some increasing sequence $\seq{k_n}$ tending to infinity we have $K_n \cap \bigcup_{i\leq k_n} J_{ki} = \emptyset$. Consequently, 
\[
  \emptyset \neq \bigcap_{n\in\Nset}K_n \subs X \setminus \bigcup_{i\in\nset} J_{ki},
\]
so the set $X$ is not covered by the sequence $\seq{J_{ki}}$. Thus, $\DD(\clos s)\neq\DD(\whs)$.
\end{proof}
\begin{thm}\label{thmwarpj}
Let $X$ be doubling and $s\in\SEQ$ be such that
\begin{equation}\label{simple3}
  \exists j\ \fmany n\in\Nset\quad s_{n+j}\leq js_{2n}.
\end{equation}
%
%
Then $\DDs(\whs)=\DDs(\closs)$.
\end{thm}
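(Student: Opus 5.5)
The inclusion $\DDs(\closs)\subs\DDs(\whs)$ holds because $\closs\leq\whs$, so only $\DDs(\whs)\subs\DDs(\closs)$ needs proof. By Theorem~\ref{thm:shift1} and Theorem~\ref{thmideals}(i), both families are \si ideals. It therefore suffices to show that a compact $\whs$-dominated set $E\subs X$ is $\closs$-dominated. So I fix $k\in\nset$ and aim to produce an $s^{\mult k}$-fine cover of $E$.

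\emph{Step 1: one doubling step.} Let $N$ be the doubling constant of $X$ and fix $j$ as in \eqref{simple3}. Choose $q$ with $2^q\geq j$. Then any set of diameter $<js_{2n}$ can be covered by $M=N^{q+1}$ sets of diameter $<s_{2n}$; the extra factor of $N$ absorbs the non-strict inequalities. Take an $s^{\shift j}$-fine cover $\seq{E_n}$ of $E$. For large $n$ we have $\diam E_n<s_{n+j}\leq js_{2n}$, so each $E_n$ splits into $M$ sets of diameter $<s_{2n}$. This gives an \emph{$M$-fold} $s^{\mult2}$-fine family covering $E$.

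\emph{Step 2: iteration.} Start instead from a cover fine for a larger additive shift $s^{\shift c}$. Then apply Step 1 repeatedly, in the form
\[
  s_{m+j}\leq js_{2m}\quad\text{with } m+j=2^{i}n-c_i .
\]
After $p$ rounds this yields an $M^p$-fold family $\seq{E_{n,i}:n\in\nset,\ i<M^p}$ covering $E$ with $\diam E_{n,i}<s_{2^pn-c}$ for all $n\geq n_0$, where $c$ and $n_0$ depend only on $p$ and $j$. Since $s_{2^pn-c}\leq s_{2^{p-1}n}$ for large $n$, choosing $p$ with $2^{p-1}\geq kM^p$ is impossible as stated, because $M^p$ grows faster than $2^{p-1}$ when $M\geq 2$. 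So the order of choices has to be arranged more carefully. First fix $p_0$ and $M_0=M^{p_0}$. Then note that one more round multiplies the index factor by $2$ but also multiplies the number of pieces by $M$. I expect the correct way out is to use the doubling bound more economically: cover a set of diameter $<js_{2n}$ by pieces of diameter $<s_{2^pn}$ in a single step, using \eqref{simple3} iterated on the sequence values only. Then $M$ is a fixed constant depending on $j$ and $N$, and the index gain $2^p$ can be made large independently. This balancing of the number of pieces against the index gain is the step I expect to be the main obstacle.

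\emph{Step 3: reindexing.} Once I have an $M$-fold family with $\diam E_{n,i}<s_{kMn}$ for $n\geq n_0$, I put $E_{n,i}$ at index $Mn-i$. These indices are distinct and satisfy $Mn-i\leq Mn$, so
\[
  \diam E_{n,i}<s_{kMn}\leq s_{k(Mn-i)},
\]
as required by $s^{\mult k}$-fineness. The finitely many small indices are handled by compactness. The original $\whs$-fine covers may be taken open and hence finite, so instead of $s^{\shift j}$ I start from $s^{\shift J}$ with $J$ huge. Then all cover indices are $\geq n_0$ after an index shift that only improves the bounds, and no small index is ever used. This yields an $s^{\mult k}$-fine cover of $E$, so $E\in\DDs(\closs)$.
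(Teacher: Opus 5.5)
Your reduction to compact $E$ and your Step~1 are fine. The proof breaks exactly where you expect, and the way out you propose is false. Step~3 pays for the multiplicity $M$ by dilating indices by the factor $M$, so you need $kM$ to be at most the index gain, about $2^p$.

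Your claim that a set of diameter $<js_{2n}$ can be covered by pieces of diameter $<s_{2^pn}$ with $M$ independent of $p$ is wrong. Iterating \eqref{simple3} only gives $s_{n+2j}\leq j^ps_{2^pn}$ for large $n$. So the doubling property yields about $N^{p\log j}$ pieces, which is exponential in $p$ with base $N^{\log j}$; in general that base is at least $2$.

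This is not an artefact of the estimate. Take $s=\harm^q$ with $q>1$, which satisfies \eqref{simple3} (cf.\ Example~\ref{exs6}(v)). Then $s_{n+2j}/s_{2^pn}\to 2^{pq}$. A splitting that knows only $\diam E_n<s_{n+2j}$ and uses the doubling property of $X=\Rset$ must be able to cover an interval of that length. That needs about $2^{pq}>2^p$ pieces of diameter $<s_{2^pn}$. Thus $kM\leq 2^p$ fails for every $p$ and $k$, and no ordering of the choices rescues a split-and-reindex scheme. Starting from a finite $s^{\shift J}$-fine cover only disposes of the small indices; it does not touch this problem.

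The missing idea is to absorb the multiplicity at no index cost. For compact sets this is exactly what Lemma~\ref{2mic} together with Theorem~\ref{thm:shift1} provides. Suppose that for every additive shift $t^{\shift i}$ there are $m$ many $t^{\shift i}$-fine sequences jointly covering a compact $E$. Then $E$ is a union of $m$ compact $\wh t$-dominated sets, and finitely many such sets are covered jointly by concatenating finite covers along additive shifts. The only cost is an additive shift, which is free here.

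The paper uses this to get $\DDs(\whs)=\DDs(\del\whs)$ for all $\del>0$ in a doubling space. It then iterates \eqref{simple3} to obtain $s_{2^pn}\geq\eps^ps_{n+2j}$ for large $n$, with $\eps=1/j$, so that $\bigcup_p\eps^p\whs\leq\closs$.

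Your Step~1 also plugs directly into this mechanism. Apply it to an $s^{\shift(i+j+L)}$-fine cover, with $L$ past the threshold in \eqref{simple3}. This gives, for every $i$, an $M$-fold $(s^{\mult2})^{\shift i}$-fine family covering $E$. Hence $E$ is $\wh{s^{\mult2}}$-dominated by Lemma~\ref{2mic} and Theorem~\ref{thm:shift1}. Since $s^{\mult2}$ satisfies \eqref{simple3} with the same $j$, induction shows that $E$ is $\wh{s^{\mult{2^p}}}$-dominated for every $p$, and hence $\closs$-dominated.
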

\begin{proof}
Let $E\subs X$ be compact and $\whs$-dominated and let $i\in\Nset$. Let
$\seq{E_n}$ be an $s^{\shift i}$-fine cover. Since $X$ is doubling, for any $\del>0$
there is a number $m\in\Nset$ such that each set $A\subs X$ can be covered by $m$
many sets of diameters at most $\del\diam A$, and this in particular holds for $E_n$'s.
Hence the assumptions of Lemma~\ref{2mic}
are satisfied, and therefore $E$ is a finite union of $\del \whs$-dominated sets.
Since $\del\whs=\wh{\del s}$, $E$ is, by Theorem~\ref{thm:shift1}, $\del\whs$-dominated.

Now suppose that $E\in\DDs(\whs)$. Then it is covered by countably many compact
$\whs$-dominated sets. By the above, each of them is $\del \whs$-dominated and
thus $E$ is $\del \whs$-dominated as well. It follows that $\DDs(\whs)=\DDs(\del \whs)$.

Let $\eps=1\lom j$ and $S=\bigcup_{k\in\nset}\eps^k \whs$.
We have $\DDs(S)=\bigcap_{k\in\nset}\DDs(\eps^k \whs)=\DDs(\whs)$.
An easy proof by induction shows that
\[
\forall k \ \fmany n \quad s_{2kn}\geq\eps^k s_{n+2j}.
\]
Since $\eps^k s^{\shift{2j}}\in S$, it follows that $S\leq\closs$.
Since clearly $\closs\leq \whs$,
we have
\[
  \DDs(S)\subs\DDs(\closs)\subs\DDs(\whs)=\DDs(S),
\]
hence all ideals are equal and in particular $\DDs(\closs)=\DDs(\whs)$.
\end{proof}

\begin{exs}\label{exs6}
(i) \emph{Microscopic sets.}
It is easy to show that $\geom$ satisfies condition~\eqref{3sn>}.
Thus, Theorem~\ref{s-nots*2} yields $\DD(\clos\geom)\neq\DD(\wh{\geom})$.
However, we do not know if $\DD(\wh{\geom})$ is an ideal, let alone \si ideal.

(ii) \emph{Generalized microscopic sets.}
More generally, if $p>0$ and $(\geom_p)_n=2^{-n^p}$, then condition~\eqref{3sn>} holds for $\geom_p$.
Therefore, $\DD(\clos\geom_p)\neq\DD(\wh{\geom_p})$ for all $p>0$.

(iii) \emph{Generalized microscopic sets II.}
It is easy to check that if $p\geq 2$, then condition~\eqref{1sn>} holds.
Thus, by Theorem~\ref{s-nots*}, $\DD(\wh{\geom_p})$ is not an ideal if $p \geq 2$.

(iv)  \emph{Generalized nanoscopic sets.}
Let $\eps<1$, $\beta>1$ and $s_n=\eps^{\beta^n}$, cf.~Example~\ref{exs1}(iv).
It is easy to check that~\eqref{1sn>} holds.
Thus, by Theorem~\ref{s-nots*}, $\DD(\whs)$ is not an ideal and
$\DD(\closs)\neq\DD(\whs)$.

(v) If $p>1$, then \emph{the generalized harmonic sequence} $\harm^p$ satisfies
condition~\eqref{simple3} of Theorem~\ref{thmwarpj}.
Therefore, $\DDs(\wh{\harm^p})=\DDs(\clos{\harm^p})$.
\end{exs}
In Corollary~\ref{sigma} we showed that $\DD_{\el2}(\closs)\neq\NN(\hm_{\el2}^\phi)$
for every $s$ and $\phi$. For some $s$ way more is true: there is no family $\Phi$
of gauges such that $\DD(\closs)=\bigcap_{\phi\in\Phi}\NN(\hm^\phi)$.
\begin{prop}
If $\DD(\closs)\neq\DD(\whs)$, then
\begin{equation}\label{Phi}
  \text{for every family $\Phi$ of gauges }\DD(\closs)\neq\bigcap_{\phi\in\Phi}\NN(\hm^\phi).
\end{equation}
In particular, \eqref{Phi} holds for every $s\in\SEQ$ that satisfies \eqref{3sn>}.
\end{prop}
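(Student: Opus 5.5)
We argue by contradiction. Suppose $\DD(\closs)=\bigcap_{\phi\in\Phi}\NN(\hm^\phi)$ for some family $\Phi$ of gauges. We will show that then $\DD(\whs)\subs\DD(\closs)$. Combined with the reverse inclusion $\DD(\closs)\subs\DD(\whs)$ (which holds because $\closs\leq\whs$), this gives $\DD(\closs)=\DD(\whs)$, contradicting the hypothesis.

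\emph{Step 1: each $\phi\in\Phi$ satisfies $\phi\circ s\in\el1$.} Fix $\phi\in\Phi$. Then $\DD_X(\closs)\subs\NN(\hm^\phi_X)$. We want to invoke Theorem~\ref{versus1}, direction (iv)$\Rightarrow$(i) (or (ii)$\Rightarrow$(i)), to conclude $\phi\circ s\in\el1$.

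There is one point of care: the ambient space. The equality in \eqref{Phi} is understood in a fixed space $X$, while Theorem~\ref{versus1} needs the inclusion in $\el2$ or in every Cantor cube. So we interpret the statement in a universal space such as $\el2$, which is consistent with the $\el2$ convention of Corollary~\ref{sigma}. Then (iv)$\Rightarrow$(i) of Theorem~\ref{versus1} applies directly.

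If instead we must work in a specific space like $\Rset$, we would need a version of the implication for that space. The obvious tool is Theorem~\ref{versus3}, but it requires $2$-doubling gauges. To avoid this, we simply state and prove the proposition in the universal-space setting, and point out that the witness $X$ from Theorem~\ref{s-nots*2} lives in $\Rset\subs\el2$ anyway.

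\emph{Step 2: $\DD(\whs)\subs\NN(\hm^\phi)$ for every $\phi\in\Phi$.} By Step 1, $\phi\circ s\in\el1$. The argument in the proof of Theorem~\ref{versus3.5}, direction (ii)$\Rightarrow$(i), then gives $\DD(\whs)\subs\NN(\hm^\phi)$ in every metric space. The reason is that an $s^{\shift k}$-fine cover witnesses $\hm^\phi_{s_k}(E)\leq\sum_{n>k}\phi(s_n)$, and this tail tends to $0$ as $k\to\infty$.

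\emph{Step 3: conclusion.} Intersecting the inclusions of Step 2 over $\phi\in\Phi$ gives
\[
\DD(\whs)\subs\bigcap_{\phi\in\Phi}\NN(\hm^\phi)=\DD(\closs),
\]
which is the contradiction described at the start.

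The case $\Phi=\emptyset$ needs a separate remark. The empty intersection is the whole power set, so the assumption would say that $X$ itself is $\closs$-dominated. In that case every subset of $X$ is $\closs$-dominated, hence also $\whs$-dominated, so again $\DD(\closs)=\DD(\whs)$, contradicting the hypothesis. Thus Step 3 covers this case as well.

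\emph{The "in particular" clause.} For $s$ satisfying \eqref{3sn>}, Theorem~\ref{s-nots*2} gives $\DD(\closs)\neq\DD(\whs)$. Its witness is a compact subset of $\Rset$, so the hypothesis of the proposition holds in $\Rset$ and in $\el2$, and the main statement applies.
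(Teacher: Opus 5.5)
Your proof is correct and follows the paper's argument. The paper likewise passes from $\DD(\closs)\subs\NN(\hm^\phi)$ to $\DD(\whs)\subs\NN(\hm^\phi)$ for each $\phi\in\Phi$ via Theorem~\ref{versus3.5} (cited in the text as Theorem~\ref{versus3}), which is exactly your Step~1 through Theorem~\ref{versus1} plus your tail estimate, and then sandwiches $\DD(\closs)\subs\DD(\whs)\subs\bigcap_{\phi\in\Phi}\NN(\hm^\phi)$; your explicit choice of $\el2$ as ambient space is what makes that step legitimate, since Theorem~\ref{versus3.5} needs the inclusion in every metric space (or in a universal one), a point the paper's proof leaves implicit.
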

\begin{proof}
Suppose that $\DD(\closs)=\bigcap_{\phi\in\Phi}\NN(\hm^\phi)$. Then
$\DD(\closs)\subs\NN(\hm^\phi)$ for every $\phi\in\Phi$ and, by Theorem~\ref{versus3},
$\DD(\whs)\subs\NN(\hm^\phi)$ for every $\phi\in\Phi$. Therefore,
\[
  \DD(\closs)\subs\DD(\whs)\subs\bigcap_{\phi\in\Phi}\NN(\hm^\phi)
\]
and consequently all of the sets are equal and in particular $\DD(\closs)=\DD(\whs)$:
a contradiction concluding the proof.
\end{proof}
\begin{ex}
The ideal of microscopic sets is not equal to $\bigcap_{\phi\in\Phi}\NN(\hm^\phi)$
for any family $\Phi$ of gauges.
The same holds for the generalized microscopic sets.
\end{ex}
\section{Hausdorff dimension}\label{sec:dim}
Recall that Hausdorff dimension of a set is defined and denoted by
\[
  \hdim E=\inf\{\beta:\hm^\beta(E)=0\}=\sup\{\beta:\hm^\beta(E)=\infty\}.
\]
In this section we study ideals of sets of small Hausdorff dimension on the line $\Rset$,
i.e., \si ideals
\begin{alignat*}{2}
  &\HD(\alpha)&&=\{E\subs\Rset: \hdim E\leq \alpha\},\\
  &\hd(\alpha)&&=\{E\subs\Rset: \hdim E< \alpha\}
\end{alignat*}
where $\alpha\geq0$, and characterize them in terms of dominated sets.
Recall that
\begin{itemyze}
\item $\harm=\seq{\frac1{n+1}:n\in\nset}$ is the harmonic sequence,
\item $\MM_{\ln}=\DD(\{\seq{\eps^{\ln(n+1)}:n\in\nset}:\eps>0\})$.
\end{itemyze}
\begin{prop}\label{dim1}
For each $\alpha\geq0$ let $S_\alpha=\{\harm^{1\lom\beta}:\beta>\alpha\}$.
Then
\begin{enum}
\item $\HD(\alpha)=\DD(\clos S_\alpha)$,
\item $\HD(0)=\DD(S_0)=\MM_{\ln}$.
\end{enum}
\end{prop}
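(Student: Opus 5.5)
The plan is to reduce everything to Corollary~\ref{sharp7} applied to the power gauges $\phi(r)=r^\beta$. For $\beta>0$ put $s=\harm^{1\lom\beta}$, so $s_n=(n+1)^{-1/\beta}$ and $\phi(s_n)=\frac1{n+1}$. Then $n\phi(s_n)\in[\frac12,1]$, hence $\phi\asymp\harm^{1\lom\beta}$ (cf.\ Example~\ref{ex4.2}(ii)). Corollary~\ref{sharp7} then gives
\[
  \NN(\hm^\beta)\subs\DD(\clos{\harm^{1\lom\beta}})\subs\bigcap_{\gamma>1}\NN(\hm^{\beta\gamma}).
\]
I will also use the formal identity
\[
  \DD(\clos S_\alpha)=\bigcap_{\beta>\alpha}\DD(\clos{\harm^{1\lom\beta}}).
\]
It holds because $\clos S_\alpha=\bigcup_{\beta>\alpha}\clos{\harm^{1\lom\beta}}$, and being $S$-dominated is a condition imposed separately on each $s\in S$.

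For (i), take $E$ with $\hdim E\leq\alpha$. Then $\hm^\beta(E)=0$ for every $\beta>\alpha$, so by the first inclusion $E\in\DD(\clos{\harm^{1\lom\beta}})$ for all $\beta>\alpha$, i.e.\ $E\in\DD(\clos S_\alpha)$. Conversely, if $E\in\DD(\clos S_\alpha)$, then the second inclusion yields $\hm^{\beta\gamma}(E)=0$ for all $\beta>\alpha$ and $\gamma>1$. Hence $\hdim E\leq\beta\gamma$ for all such $\beta,\gamma$, and taking the infimum gives $\hdim E\leq\alpha$.

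For (ii), the identity $\DD(S_0)=\MM_{\ln}$ is a direct computation: $\eps^{\ln(n+1)}=(n+1)^{\ln\eps}=\harm^{1\lom\beta}$ with $\beta=-1/\ln\eps$, and as $\eps$ runs over $(0,1)$, $\beta$ runs over $(0,\infty)$. So the two families of sequences coincide. To get $\DD(S_0)=\HD(0)$ from (i), it suffices to show $S_0\para\clos S_0$. One direction, $\clos S_0\leq S_0$, is trivial since $S_0\subs\clos S_0$. For the other direction I must show $S_0$ is multiplicatively complete up to $\leq$: for each $\beta>0$ and $k\in\nset$ there should be $\beta'>0$ with $\harm^{1\lom\beta'}\leq(\harm^{1\lom\beta})^{\mult k}$ for \emph{all} $n$. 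With $p=\beta/\beta'$, this amounts to
\[
  (n+1)^{p}\geq kn+1\quad\text{for all }n\geq1.
\]
The main (though small) obstacle is that the requirement is for every $n$, not just eventually, because $S_0$ is not closed under constant multiples. I would choose $p\geq1$ with $2^{p}\geq 2k$. Then the inequality holds at $n=1$, since $2^p\geq k+1$. For $n\geq1$ the derivative of the left side minus the right side is $p(n+1)^{p-1}-k\geq p2^{p-1}-k\geq0$, so the difference stays nonnegative. Hence $S_0\leq\clos S_0$, so $S_0\para\clos S_0$ and $\DD(S_0)=\DD(\clos S_0)=\HD(0)$ by (i).
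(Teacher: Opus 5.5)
Your proof is correct and follows essentially the paper's argument. The two inclusions you take from Corollary~\ref{sharp7} are exactly the paper's chain $\NN(\hm^\beta)=\DDD(\clos{\harm^{1\lom\beta}})\subs\DD(\clos{\harm^{1\lom\beta}})\subs\NN(\hm^\gamma)$, obtained from Theorems~\ref{domhaus1} and~\ref{versus1}; for (ii) you explicitly carry out the ``routine check'' that $S_0$ is multiplicatively complete, which the paper leaves to the reader.
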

\begin{proof}
(i) Let $\gamma>\beta>\alpha\geq0$.
Consider the generalized harmonic sequences $\harm^{1\lom\beta}$
and the gauges $\phi_\beta(r)=r^\beta$. Clearly
\begin{enum}
\item[(a)] $\phi_\beta\asymp\harm^{1\lom\beta}$,
\item[(b)] $\phi_\gamma\circ\harm^{1\lom\beta}=\harm^{\gamma\lom\beta}\in\el1$,
\end{enum}
hence for each $\gamma>\beta$
\[
\NN(\hm^\beta)=\DDD(\clos{\harm^{1\lom\beta}})\subs\DD(\clos{\harm^{1\lom\beta}})
  \subs\NN(\hm^\gamma)
\]
the last inclusion by (b) and Theorem~\ref{versus1}.
Therefore,
\[
  \HD(\alpha)=\bigcap_{\beta>\alpha}\NN(\hm^\beta)
  =\bigcap_{\beta>\alpha}\DD(\clos{\harm^{1\lom\beta}})=\DD(\clos S_\alpha).
\]
(ii) A routine check proves that $S_0$ is multiplicatively complete.
Hence the first equality is a particular case of (i). The second inequality follows from
$\eps^{\ln(n+1)}=(n+1)^{\ln\eps}$ and the defi\-ni\-tion of $\MM_{\ln}$.
\end{proof}
\begin{coro}
$\hdim E=\inf\{\beta:E\text{ is $\clos{\harm^{1\lom\beta}}$-dominated}\}$
for every $E\subs X$.
\end{coro}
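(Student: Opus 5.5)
The plan is to show both inequalities between $\hdim E$ and $\beta_0=\inf\{\beta:E\text{ is $\clos{\harm^{1\lom\beta}}$-dominated}\}$. Both follow from the chain of inclusions established in the proof of Proposition~\ref{dim1}(i): for $\gamma>\beta>0$,
\[
  \NN(\hm^\beta)=\DDD(\clos{\harm^{1\lom\beta}})\subs\DD(\clos{\harm^{1\lom\beta}})\subs\NN(\hm^\gamma).
\]
The equality here is Theorem~\ref{domhaus1}, applied with $\phi_\beta(r)=r^\beta\asymp\harm^{1\lom\beta}$. The middle inclusion is trivial. The last inclusion is Theorem~\ref{versus1}, since $\phi_\gamma\circ\harm^{1\lom\beta}=\harm^{\gamma\lom\beta}\in\el1$ and Theorem~\ref{versus1}(ii) holds in any metric space.

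First I show $\beta_0\leq\hdim E$. Take any $\beta>\hdim E$ and pick $\beta'$ with $\hdim E<\beta'<\beta$. By the definition of Hausdorff dimension, $\hm^{\beta'}(E)=0$. Then $E\in\NN(\hm^{\beta'})\subs\DD(\clos{\harm^{1\lom\beta'}})$.

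It remains to see that $\DD(\clos{\harm^{1\lom\beta'}})\subs\DD(\clos{\harm^{1\lom\beta}})$. Since $\beta'<\beta$, we have $1/\beta'>1/\beta$. The entries of $\harm$ are at most $\frac12$, so $\harm^{1\lom\beta'}\leq\harm^{1\lom\beta}$ termwise. The same holds for each multiplicative shift, so $\clos{\harm^{1\lom\beta'}}\leq\clos{\harm^{1\lom\beta}}$, and the remark after the definition of dominated sets gives the inclusion. Hence $E$ is $\clos{\harm^{1\lom\beta}}$-dominated for every $\beta>\hdim E$, which yields $\beta_0\leq\hdim E$.

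Next I show $\hdim E\leq\beta_0$. Let $E$ be $\clos{\harm^{1\lom\beta}}$-dominated and let $\gamma>\beta$. The last inclusion of the chain gives $\hm^\gamma(E)=0$, so $\hdim E\leq\gamma$. Letting $\gamma\downarrow\beta$ and then taking the infimum over all such $\beta$ gives $\hdim E\leq\beta_0$.

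The only delicate point is the case $\beta\to0$. There the parameters are positive, so the case $\hdim E=0$ is still handled by taking infima over $\beta>0$. I also need to confirm that Theorem~\ref{domhaus1} does not require $X=\Rset$. Its proof works in any metric space, so no obstacle arises there; the Corollary is essentially a restatement of Proposition~\ref{dim1} with the space left arbitrary.
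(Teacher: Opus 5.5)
Your argument is correct and is essentially the paper's own route: the corollary is Proposition~\ref{dim1}(i) rephrased, whose proof is exactly the chain $\NN(\hm^\beta)=\DDD(\clos{\harm^{1\lom\beta}})\subs\DD(\clos{\harm^{1\lom\beta}})\subs\NN(\hm^\gamma)$ that you use, and your remark that Theorems~\ref{domhaus1} and~\ref{versus1} are not tied to $\Rset$ is what justifies stating it for arbitrary $X$. The detour through $\beta'$ is harmless but unnecessary, since $\hm^\beta(E)=0$ already holds for every $\beta>\hdim E$.
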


\subsection*{Paszkiewicz Conjecture}
Horbaczewska~\cite{MR3759529} considers the following conjecture (PC1)
that she attributes to Paszkiewicz. We add one more conjecture (PC2) in the same vein.
Let $X=\Rset$ and $0<\alpha\leq1$.
\begin{itemyze}
\item ($\dagger$) $\hdim E\leq\alpha\implies\exists s\in\SEQ\ E\in\DD(\closs)\subs\HD(\alpha)$.
\item (PC1) Condition ($\dagger$) fails.
\item (PC2) The following fails: $\exists s\in\SEQ\ \HD(\alpha)=\DD(\closs)$.
\end{itemyze}
We now show that (PC1) fails, but (PC2) holds. We use the following notation:
\[
  \el{\alpha+}=\bigcap_{\beta>\alpha}\el{\beta},\quad
  \el{\alpha-}=\bigcup_{\beta<\alpha}\el{\beta}
\]

\begin{thm}\label{maindim}
Let $0\leq\alpha\leq1$ and $X=\Rset$.
\begin{enum}
\item $\HD(\alpha)=\bigcup\{\DD(\closs):s\in\SEQ\cap\el{\alpha+}\}$.
In particular, 
\emph{(PC1)} fails for all $\alpha\in[0,1]$.
\item $\hd(\alpha)=\bigcup\{\DD(\closs):s\in\SEQ\cap\el{\alpha-}\}$.
\end{enum}
\end{thm}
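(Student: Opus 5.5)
The plan is to prove (i) by two inclusions and then deduce (ii) from (i).

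\emph{Inclusion ``$\supseteq$'' in (i).} Let $s\in\SEQ\cap\el{\alpha+}$ and $\beta>\alpha$. For the gauge $\phi_\beta(r)=r^\beta$ we have $\phi_\beta\circ s\in\el1$. Theorem~\ref{versus1}, (i)$\Rightarrow$(ii), then gives $\DD(\closs)\subs\NN(\hm^\beta)$. Since this holds for every $\beta>\alpha$, we get $\DD(\closs)\subs\HD(\alpha)$.

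\emph{Inclusion ``$\subseteq$'' in (i).} This also shows that ($\dagger$) holds, so (PC1) fails, because the $s$ produced satisfies $E\in\DD(\closs)\subs\HD(\alpha)$ by the previous paragraph. Let $\hdim E\leq\alpha$ and fix a decreasing sequence $\beta_k\downarrow\alpha$, for instance $\beta_k=\alpha+1/k$. Then $\hm^{\beta_k}(E)=0$ for every $k$. I would redo the proof of Theorem~\ref{versus4} diagonally, with a different exponent $\beta_k$ for each multiplicative shift $k$. Concretely:
\begin{itemyze}
\item For each $k$, choose a cover $\seq{E^k_n:n\in\nset}$ of $E$ by sets of diameter $<1$, and a strictly decreasing sequence $\del^k_n>\diam E^k_n$, with $\sum_n(\del^k_n)^{\beta_k}<2^{-k}$.
\item Put $s_m=\sup_k\del^k_{\lceil m/k\rceil}$, adjusted to be strictly decreasing.
\item Then $\diam E^k_n<\del^k_n\leq s_{kn}$, so $E$ is $\closs$-dominated.
\end{itemyze}
For summability, fix $\beta>\alpha$ and bound $\sum_m s_m^\beta\leq\sum_k k\sum_n(\del^k_n)^\beta$. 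For all $k$ with $\beta_k\leq\beta$, the $k$-th term is at most $k2^{-k}$, because the $\del$'s are $<1$.

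\emph{Main obstacle.} The finitely many $k$ with $\beta_k>\beta$ are the problem: there $\sum_n(\del^k_n)^\beta$ need not be finite. My first remedy is to make each individual cover lie in $\el{\alpha+}$ from the outset, that is, to choose $\seq{\del^k_n}$ with $\sum_n(\del^k_n)^{\beta_j}<2^{-k}$ for \emph{all} $j\geq k$ simultaneously. I would try to get this from a single gauge. Find a continuous gauge $\phi$ with $\hm^\phi(E)=0$ and $r^\beta=O(\phi(r))$ as $r\to0$ for every $\beta>\alpha$. Then one application of Theorem~\ref{versus4} yields $s$ with $\phi\circ s\in\el1$ and $E\in\DD(\closs)$, and the growth condition on $\phi$ gives $s\in\el{\beta}$ for all $\beta>\alpha$. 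The candidate is $\phi(r)=\sup_k a_kr^{\beta_k}$ with rapidly decreasing constants $a_k>0$. This $\phi$ automatically satisfies $\phi(r)\geq a_kr^\beta$ whenever $\beta\geq\beta_k$. The delicate part is choosing the $a_k$, interleaved with $\eps$-covers of $E$ witnessing $\hm^{\beta_k}(E)=0$ whose mesh shrinks fast enough, so that $\hm^\phi(E)=0$. If the sup-gauge bookkeeping fails, I would instead use a piecewise definition of $\phi$, equal to $r^{\beta_k}$ on a range $[t_{k+1},t_k)$ with $t_k\downarrow0$ chosen fast, smoothed to keep $\phi$ monotone. This step is where I expect the real work.

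\emph{Part (ii).} It follows formally from (i). We have $\hd(\alpha)=\bigcup_{\beta<\alpha}\HD(\beta)$, and $\el{\alpha-}=\bigcup_{\beta<\alpha}\el{\beta+}$, since $\el{\gamma}\subs\el{\beta+}$ for $\beta<\gamma$ and $\el{\beta+}\subs\el{\gamma}$ for $\gamma>\beta$. Taking unions of the identities in (i) over $\beta<\alpha$ gives (ii).
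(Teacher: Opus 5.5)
Your inclusion ``$\supseteq$'' is correct. Your reduction of ``$\subseteq$'' is also sound, and slightly more direct than the paper's. Once you have a continuous gauge $\phi$ with $\hm^\phi(E)=0$ and $r^\beta=O(\phi(r))$ for every $\beta>\alpha$, Theorem~\ref{versus4} gives $s$ with $\phi\circ s\in\el1$ and $E\in\DD(\closs)$, and $s\in\el{\alpha+}$ follows at once. The paper instead takes $s\asymp\phi$, uses $\NN(\hm^\phi)=\DDD(\closs)$ from Theorem~\ref{domhaus1}, and compares $s$ with $\harm^{1/(\alpha+1/m)}$ via Proposition~\ref{dim1}. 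Part (ii) does follow from (i), but the inclusion you quote is backwards: $\el{\gamma}\subs\el{\beta+}$ holds for $\beta\geq\gamma$, not for $\beta<\gamma$. The identity $\el{\alpha-}=\bigcup_{\beta<\alpha}\el{\beta+}$ is still right.

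The genuine gap is that you never construct the gauge $\phi$, and that is the whole remaining difficulty of ``$\subseteq$''.

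\begin{itemize}
\item For such a $\phi$, every cover $\seq{E_n}$ with $\sum_n\phi(\diam E_n)<\infty$ already has $\seq{\diam E_n}\in\el{\alpha+}$. So producing $\phi$ amounts to producing covers of $E$ with diameters in $\el{\alpha+}$, which is exactly the obstacle you hit in the diagonal attempt.
\item Choosing constants cannot get around this. For $\phi=\sup_k a_kr^{\beta_k}$ you have $\phi\geq a_jr^{\beta_j}$ for every $j$. So a cover witnessing $\hm^{\beta_k}(E)<\eps$ is useless as soon as $\sum_n(\diam E_n)^{\beta_j}=\infty$ for some $j>k$, and the hypotheses $\hm^{\beta_k}(E)=0$ give you no way to choose covers avoiding this.
\item The piecewise gauge fails for the same reason. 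Such covers must in general contain arbitrarily small sets, and at small scales $\phi/r^{\beta_k}\to\infty$. Shrinking the mesh faster does not help, since the trouble is the small sets, not the large ones.
\end{itemize}

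For compact or $\sigma$-compact $E$ your piecewise idea does work, using finite covers with diameters in disjoint ranges. But $E$ is arbitrary. For example, the Liouville numbers form a dense $G_\del$ set of dimension zero that is not contained in any $\sigma$-compact set of dimension zero.

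The paper supplies this step as Lemma~\ref{dim2}. It rests on a theorem of Rogers and Taylor~\cite{MR160860}: if $\hm^{\phi_n}(E)=0$ for $\phi_n(r)=r^{\alpha+1/n}$, then there is a single gauge $\psi$ with $\psi\prec\phi_n$ for all $n$ and $\hm^\psi(E)=0$. This $\psi$ is then modified to a continuous $\phi\leq\psi$ with the same property. Without citing or proving such a result, your proof of ``$\subseteq$'' is incomplete, and so is your proof that (PC1) fails.
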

\begin{proof}
Let $\hdim E\leq\alpha$.
Let $\phi$ be a continuous gauge $\phi$ such that $\hm^\phi(E)=0$ and
$\phi\prec\beta$ for all $\beta>\alpha$ guaranteed by Lemma~\ref{dim2}.
Since $\phi$ is continuous, there is $s\in\SEQ$ such that $s\asymp\phi$.
Therefore, $E\in\NN(\hm^\phi)=\DDD(\closs)\subs\DD(\closs)$.

On the other hand, since $s\asymp\phi\prec r^{\alpha+\frac1m}$ for all $m\in\nset$, we have
$\forall n\ s_n\leq n^{-{1/(\alpha+\frac1m)}}$ and consequently
$\DD(\closs)\subs\DD\Bigl(\clos{\harm^{1/(\alpha+\frac1m)}}\Bigr)$.
It follows that
\[
  \DD(\closs)\subs
  \bigcap_{m\in\nset}\DD\Bigl(\clos{\harm^{1/(\alpha+\frac1m)}}\Bigr)
  =\HD(\alpha)
\]
by Proposition~\ref{dim1}. Since
$\forall\beta>\alpha\ \exists m\ \frac{\beta}{\alpha+\frac1m}>1$,
we have $s\in\el{\beta}$, so $s\in\el{\alpha+}$.

In summary,
\begin{equation}\label{aux1}
  \hdim E\leq\alpha\implies\exists s\in\el{\alpha+}\
E\in\DD(\closs)\subs\HD(\alpha).
\end{equation}

On the other hand, if $s\in\el{\beta}$, then $r^\beta\circ s\in\el1$, therefore
Theorem~\ref{versus1} yields $\DD(\closs)\subs\NN(\hm^\beta)$.
Since $\HD(\alpha)=\bigcap_{\beta>\alpha}\NN(\hm^\beta)$, it follows that
\[
  \HD(\alpha)\subs\bigcup\{\DD(\closs):s\in\el{\alpha+}\}
\]
and~\eqref{aux1} yields the reverse inclusion.

We proved (i). (ii) follows from (i) by straightforward calculation.
\end{proof}

We now prove (PS2).
\begin{thm}
\begin{enum}
\item For every $s\in\SEQ$ and each $\alpha<1$, $\HD(\alpha)\neq\DD(\closs)$.
\item For every $s\in\SEQ$, $\HD(1)\neq\DD(\closs)$ if and only if $s\in\el1$.
\end{enum}
\end{thm}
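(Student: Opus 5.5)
We prove (ii) first, since it is elementary, and then (i), which needs a gauge construction.

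\emph{Part (ii).} On $\Rset$ every set has $\hdim\leq1$, so $\HD(1)$ is the power set of $\Rset$. Hence $\HD(1)=\DD(\closs)$ if and only if $\Rset$, equivalently $[0,1]$ (using \si additivity from Theorem~\ref{thmideals}), is $\closs$-dominated. The statement should therefore be read as ``$\HD(1)\neq\DD(\closs)$ iff $s\in\el1$.''
\begin{itemyze}
\item If $s\notin\el1$, then every $s^{\mult k}\notin\el1$ because $s$ is nonincreasing. As in the proof of Theorem~\ref{versus1}, $[0,1]$ can be covered by intervals of lengths $s^{\mult k}_n$, and shrinking each slightly to get strict inequality is harmless. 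So $\HD(1)=\DD(\closs)$.
\item If $s\in\el1$, an $s^{\mult k}$-fine cover of $[0,1]$ has total length at most $\sum_n s_{kn}\leq\frac1k\sum_n s_n$, up to a bounded error. This tends to $0$, so the Lebesgue outer measure of $[0,1]$ would be $0$, a contradiction. Alternatively, apply Theorem~\ref{versus1} with $\phi(r)=r$.
\end{itemyze}

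\emph{Part (i).} Fix $\alpha<1$ and assume toward a contradiction that $\HD(\alpha)=\DD(\closs)$.
\begin{itemyze}
\item \emph{Summability of $s$.} For every $\beta\in(\alpha,1]$ we have $\DDs_\Rset(\closs)\subs\HD(\alpha)\subs\NN(\hm^\beta)$. The gauge $r^\beta$ is $2$-doubling since $2^\beta\leq2$, so Theorem~\ref{versus3} (ii)$\Rightarrow$(i) gives $s\in\el\beta$. Hence $s\in\el{\alpha+}$.
\item \emph{A gauge between $r^\alpha$ and all $r^\beta$.} Fix $\beta_m\downarrow\alpha$ with $\beta_m<\frac{1+\alpha}2$. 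Construct by a diagonal patching argument, in the spirit of Lemmas~\ref{ash2} and~\ref{2gauges2}, a continuous increasing gauge $\psi$ with three properties:
\begin{enum}
\item[(a)] $\psi\circ s\in\el1$,
\item[(b)] $\psi\prec r^\beta$ for every $\beta>\alpha$,
\item[(c)] $\psi$ is $L$-doubling for some $L<2$.
\end{enum}
The idea is to let $\psi$ agree with constant multiples of $r^{\beta_m}$ on successive blocks $[t_{m+1},t_m]$. Each $t_m$ is chosen so small that the tail $\sum_{s_n<t_m}s_n^{\beta_m}$ is below $2^{-m}$, and the blocks are glued by monotone interpolation. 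Property (c) follows because all local exponents are at most $\frac{1+\alpha}2<1$; this needs some care at the junctions. By Lemma~\ref{doubling11} we may then assume $\psi$ is strictly $2$-doubling, continuous and increasing.
\item \emph{The witness.} Put $r_n=\psi^{-1}(2^{-n})$. Then $r_{n+1}<\frac12r_n$, so Lemma~\ref{cubes2}(ii) applies: the symmetric Cantor set $\cube(r)$ satisfies $0<\hm^\psi(\cube(r))\leq1$.
\item \emph{Conclusion.} By (b), a set of finite $\hm^\psi$ measure has $\hm^\beta$ measure zero for every $\beta>\alpha$, so $\cube(r)\in\HD(\alpha)$. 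By (a) and Theorem~\ref{versus1} (i)$\Rightarrow$(ii), $\DD(\closs)\subs\NN(\hm^\psi)$, so $\cube(r)\notin\DD(\closs)$. This contradicts the assumption, and the case $\alpha=0$ is covered by the same argument.
\end{itemyze}

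The main obstacle is the construction of $\psi$. Properties (a) and (b) pull in opposite directions, and the doubling condition (c) must survive the gluing of blocks. If the direct patching is awkward, I would first apply Lemma~\ref{2gauges2} to the family $\{r^\beta:\beta>\alpha\}$ to obtain gauges between $r^\alpha$ and all $r^\beta$ that remain $2$-doubling. I would then enforce $\psi\circ s\in\el1$ by a further slowing-down in the manner of Lemma~\ref{ash2}(ii).
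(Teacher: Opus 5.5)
Your proof is correct. Part (ii) is the paper's argument. Part (i), however, takes a genuinely different route.

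\textbf{The paper's route for (i).} It fixes a gauge $\phi\asymp s$ and proceeds in four steps.
\begin{enumerate}
\item Corollary~\ref{sharp7} gives $\NN(\hm^\phi)\subs\DD(\closs)=\HD(\alpha)$.
\item Lemma~\ref{ideals} turns this into $\phi\prece\beta$ for all $\beta>\alpha$.
\item Lemma~\ref{2gauges2} builds logarithmically slowed gauges $\zeta\prec\xi$ with $\zeta\circ s\in\el1$ and $\xi\prec\gamma$ for every $\gamma>\alpha$.
\item Howroyd's theorem, via Lemma~\ref{2gauges}, separates $\NN(\hm^\zeta)$ from $\NN(\hm^\xi)$, squeezing $\HD(\alpha)$ strictly inside itself.
\end{enumerate}

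\textbf{Your route for (i).} You use the two inclusions separately. First, $\DD(\closs)\subs\HD(\alpha)$ together with Theorem~\ref{versus3} forces $s\in\el{\alpha+}$; this is legitimate because $r^\beta$ is $2$-doubling for $\beta\leq1$. Then $s\in\el{\alpha+}$ alone produces an explicit gauge and an explicit compact set $\cube(r)\in\HD(\alpha)\setminus\DD(\closs)$.

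\textbf{What each route buys.} Yours avoids $\asymp$, Theorem~\ref{domhaus1} and Howroyd's theorem. It makes transparent where $\alpha<1$ enters, which the paper leaves implicit in the need for $\hm^\xi(\Rset)>0$. It also gives a sharper dichotomy with compact witnesses on both sides. For $\alpha<1$: if $s\notin\el{\alpha+}$, some compact $\closs$-dominated set lies outside $\HD(\alpha)$; if $s\in\el{\alpha+}$, some compact set in $\HD(\alpha)$ is not $\closs$-dominated. The paper's route is shorter given the machinery already built, and it parallels Theorem~\ref{sharp8}.

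\textbf{The gauge construction.} The step you flag as the main obstacle is routine. Take $\log\psi$ piecewise linear in $\log r$, with slope $\beta_m$ on $[t_{m+1},t_m]$: pure power pieces $c_m r^{\beta_m}$ matched at the junctions, with no extra interpolation. Then:
\begin{itemyze}
\item $\psi(2r)\leq 2^{\beta_0}\psi(r)<2\psi(r)$ for all $r$, so (c) is immediate and Lemma~\ref{doubling11} is not even needed.
\item The constants $c_m$ decrease, so your tail conditions on $t_m$ give (a).
\item (b) holds because $\log\psi(r)-\beta\log r\to\infty$ once $\beta_m<\beta$.
\item (a) forces $\psi(0+)=0$, so $\psi$ is a genuine gauge even when $\alpha=0$.
\end{itemyze}

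Your fallback plan is unnecessary and, as sketched, misaimed. Lemma~\ref{ash2}(ii) preserves non-summability, not summability. Lemma~\ref{2gauges2} applies to a single gauge, not to the family $\{r^\beta:\beta>\alpha\}$.
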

\begin{proof}
(i) Aiming towards contradiction suppose that there is $s\in\SEQ$ such that
$\HD(\alpha)=\DD(\closs)$. Let $\phi$ be a gauge such that $\phi\asymp s$.
Corollary~\ref{sharp7} yields $\NN(\hm^\phi)\subs\HD(\alpha)$.

Let $\alpha<\beta<\gamma$ and $\eps=\frac\gamma\beta-1$.
Then $\HD(\alpha)\subs\NN(\hm^\beta)$ and hence $\NN(\hm^\phi)\subs\NN(\hm^\beta)$
and Lemma~\ref{ideals} yields $\phi\prece\beta$.
Therefore, $\phi^{1+\eps}\prece\beta(1+\eps)=\gamma$.
We proved
\begin{equation}\label{pas1}
  \forall\gamma>\alpha\ \exists\eps>0\quad \phi^{1+\eps}\prece\gamma.
\end{equation}
By Lemma~\ref{2gauges2}, there are gauges $\zeta\prec\xi$ such that for any $\gamma>\alpha$
$\zeta\prec\xi\prec\phi^{1+\eps}\prece\gamma$ for any $\gamma>\alpha$ and
$\zeta\circ s\in\el1$, whence $\DD(\closs)\subs\NN(\hm^\zeta)$.
By Lemma~\ref{2gauges}, $\NN(\hm^\zeta)\subsetneqq\NN(\hm^\xi)$. In summary
\[
  \HD(\alpha)\subs\DD(\closs)\subs\NN(\hm^\zeta)\subsetneqq\NN(\hm^\xi)\subs\NN(\hm^\gamma)
\]
holds for all $\gamma>0$, so taking intersection on the right yields
\[
  \HD(\alpha)\subs\DD(\closs)\subs\NN(\hm^\zeta)\subsetneqq\NN(\hm^\xi)
  \subs\bigcap_{\gamma>\alpha}\NN(\hm^\gamma)=\HD(\alpha),
\]
a contradiction concluding the proof of (i).

(ii) If $s\in\el1$, then, by Theorem~\ref{versus1}, $\DD(\closs)\subs\NN(\hm^1)$.
The line $\Rset$ witnesses $\NN(\hm^1)\subsetneqq\HD(1)$.
If $s\notin\el1$, then $[0,1]$ is clearly $s$-dominated,
which is enough for $\DD(\closs)=\HD(1)$.
\end{proof}

\section{Appendix: Gauges}\label{sec:gauges}

In this section we establish various technical lemmas regarding gauges.
\begin{lem}\label{ash}
Let $c$ be a nonincreasing sequence of positive reals.
\begin{enum}
\item If $c\in\el1$, then there is $c'\in\SEQ\cap\el1$ such that $\frac{c_n}{c'_n}\to 0$.
\item If $c\notin\el1$, then there is $c'\in\SEQ\setminus\el1$ such that
$\frac{c_n}{c'_n}\to\infty$.
\end{enum}
\end{lem}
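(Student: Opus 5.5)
This is the classical Abel–Dini construction via tails (for (i)) and partial sums (for (ii)). The only extra work is to make $c'$ strictly decreasing, which we do by adding a small strictly decreasing perturbation.

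\emph{Part (i).} Let $\rho_n=\sum_{k\geq n}c_k$ be the tails of the series. These are positive, nonincreasing and tend to $0$; they are strictly decreasing because every $c_k>0$. Put $c''_n=\sqrt{\rho_n}$. Then $c''$ is strictly decreasing to $0$. Moreover
\[
  \frac{c_n}{c''_n}=\frac{\rho_n-\rho_{n+1}}{\sqrt{\rho_n}}\leq\sqrt{\rho_n}\to0 .
\]
Summability follows from
\[
  \sqrt{\rho_n}-\sqrt{\rho_{n+1}}=\frac{\rho_n-\rho_{n+1}}{\sqrt{\rho_n}+\sqrt{\rho_{n+1}}}\geq\frac{c_n}{2\sqrt{\rho_n}},
\]
together with the standard estimate that $\sum (\rho_n-\rho_{n+1})/\sqrt{\rho_n}$ telescopes to at most $2\sqrt{\rho_1}$. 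That estimate is exactly what shows $\sum c_n/\sqrt{\rho_n}<\infty$; it does not by itself give $\sum\sqrt{\rho_n}<\infty$, which can fail. So the right choice is
\[
  c'_n=\frac{c_n}{\sqrt{\rho_n}}+2^{-n}.
\]
Then $c'\in\el1$ by the telescoping bound, and $c_n/c'_n\leq\sqrt{\rho_n}\to0$. The one concern is monotonicity, since $c_n/\sqrt{\rho_n}$ is a decreasing sequence divided by a decreasing sequence. To handle this I would replace $c'$ by a strictly decreasing majorant that keeps the ratio property and summability. The plan is to set $d_n=c_n/\sqrt{\rho_{n}}$ and work with $\sum_{k\geq n}$-type envelopes $\max_{k\geq n}d_k$. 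Alternatively, I would pass to $c_n/\sqrt{\rho_{n+1}}$ bounds and use $c_{n+1}\leq c_n$, $\rho_{n+1}\leq\rho_n$ to compare.

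A cleaner route I would try first uses blocks. Choose $n_1<n_2<\cdots$ with $\rho_{n_j}<4^{-j}$. Set $c'_n=\max(j\,c_n,\ 2^{-n})$ on block $[n_j,n_{j+1})$, and then take $\tilde c_n=c'_n+2^{-n}$ after replacing it by its decreasing hull $\sup_{k\geq n}$. The ratio tends to $\infty$ in the reciprocal sense: $c_n/\tilde c_n\leq 1/j$. Summability holds because $\sum_j j\rho_{n_j}<\infty$. For the hull, note that $j\,c_n$ is nonincreasing inside a block and jumps up only at block boundaries. The hull is therefore bounded blockwise by $(j+1)c_n$, and summability survives. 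The final additive term $2^{-n}$ makes the sequence strictly decreasing.

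\emph{Part (ii).} This is symmetric. The partial sums $\sigma_n=\sum_{k\leq n}c_k$ increase to $\infty$. Choose $n_1<n_2<\cdots$ so that $\sum_{n_j\leq k<n_{j+1}}c_k\geq j$ (possible since the series diverges). On block $j$ set $c'_n=c_n/j$, so $c_n/c'_n=j\to\infty$. Then $\sum c'_n\geq\sum_j 1=\infty$. Also $c'$ is nonincreasing: it is nonincreasing inside each block, and at a boundary the factor drops from $1/j$ to $1/(j+1)$ while $c$ does not increase. It tends to $0$ since $c'_n\leq c_n/j$ with $c$ bounded. To get strict decrease, multiply by $(1+2^{-n})$, which preserves all the properties.

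\textbf{Main obstacle.} The real difficulty is getting strict monotonicity together with the ratio condition in (i), where block jumps go the wrong way (up). This is why I would use the decreasing hull with the $(j+1)c_n$ bound.
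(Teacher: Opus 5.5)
Your part (ii) is correct. It is in fact simpler than the paper's: dividing $c$ blockwise by the nondecreasing factor $j$ keeps it nonincreasing automatically, whereas the paper invokes Ash's theorem to get a divergent $d$ with $d_n/c_n\to0$ and then passes to the majorant $\max_{i\geq n}d_i$.

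Part (i), however, has a genuine gap exactly at the step you flag as the main obstacle. Your claim that the hull $h_n=\sup_{k\geq n}c'_k$ is bounded on block $j$ by $(j+1)c_n$ is false. The multiplier jumps up at \emph{every} later boundary, and these jumps accumulate. Suppose $c$ is constant on a stretch containing boundaries $n_j<n_{j+1}<\dots<n_{j+p}$. The condition $\rho_{n_j}<4^{-j}$ does not exclude this; with the $n_j$ chosen minimally, boundaries crowd together near the end of any long flat stretch. Then $h_{n_j}\geq c'_{n_{j+p}}\geq(j+p)c_{n_j}$.

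This is not only a flaw in the estimate; the hull itself can fail to be summable. Let $c$ consist of flat stretches, the $t$-th of height $v_t$ and mass $M_t$, with all later mass at most $v_t$. Then the block index at the end of stretch $t$ is about $\log_4(1/v_t)$. So $\sum_n h_n$ picks up roughly $M_t\log_4(1/v_t)$ from that stretch, which diverges if $v_t$ is taken small enough relative to $M_t$. This is compatible with the other constraints, since $M_{t+1}$ may be chosen after $v_t$.

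Your other suggestion, the envelope $\max_{k\geq n}d_k$ with $d_n=c_n/\sqrt{\rho_n}$, fails on the same kind of sequence. There $d$ spikes to about $\sqrt{v_t}$ at the end of each flat stretch of length $M_t/v_t$, so the envelope contributes about $M_t/\sqrt{v_t}$ per stretch. A decreasing majorant is the wrong tool for (i).

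The missing idea is to use a decreasing \emph{minorant}, as the paper does. Take $d_n=c_n/\sqrt{\rho_n}$, which is summable by your telescoping estimate. Put $m_n=\min_{i\leq n}d_i$ and let $\hat n\leq n$ be the largest index where this minimum is attained. Then:
\begin{enumerate}
\item $m_n\leq d_n$ gives summability for free.
\item $m_n+2^{-n}$ is strictly decreasing.
\item Since $d_n>0$ and $d_n\to0$, the minimum is renewed infinitely often, so $\hat n\to\infty$.
\item Because $c$ is nonincreasing, $c_n/m_n=(c_n/c_{\hat n})\sqrt{\rho_{\hat n}}\leq\sqrt{\rho_{\hat n}}\to0$.
\end{enumerate}

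Alternatively, you can repair your block construction by also requiring $c_{n_{j+1}}\leq\frac12c_{n_j}$ when choosing the blocks. Then for $n$ in block $j$ and $k$ in block $i>j$ you get $ic_k\leq i\,2^{-(i-j-1)}c_n\leq(j+1)c_n$, which is the bound you wanted. Summability of the hull then follows from $\sum_j(j+1)\rho_{n_j}<\infty$.
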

\begin{proof}
(i)  Let $r_n=\sum_{i\geq n}c_i$ and $d_n=\frac{c_n}{\sqrt{r_n}}$.
Then by \cite{Ash}, $d\in\el1$.
However, $d$ need not be decreasing. To fix this, let
\[
  \wh n=\max\{j\leq n: d_j=\min_{i\leq n}d_i\}.
\]
The following are immediate from the definition.
\begin{enum}
\item[(a)] $\wh n\leq n$,
\item[(b)] $d_{\wh n}\leq d_{n}$,
\item[(c)] $d_{\wh n}\geq d_{\wh{n+1}}$
\end{enum}
It is easy to show that for each $n$, either $\wh{n+1}=\wh n$ or $\wh{n+1}=n+1$ and therefore
$\seq{\wh n:n\in\nset}$ is a non-decreasing sequence that has infinitely many fixed points
and thus
\begin{enum}
\item[(d)] $\lim_{n\to\infty}\wh n=\infty$.
\end{enum}


Define $c'_{n}=d_{\wh n}+2^{-n}$. Since by (b) $c'_n\leq d_n+2^{-n}$, we have
$\Abs{c'}_1\leq\Abs{d}_1+1$ and thus $c'\in\el1$. By (c), $c'$ is strictly decreasing.
Also
$$
  \lim_{n\to\infty}\frac{c_{n}}{c'_{n}}\leq
  \lim_{n\to\infty}\frac{c_{n}}{d_{\wh n}}=
  \lim_{n\to\infty}\frac{c_{n}}{c_{\wh n}}\sqrt{r_{\wh n}}\leq
  \lim_{n\to\infty}\sqrt{r_{\wh n}}=0
$$
by (d). We proved (i).

(ii) By \cite{Ash}, there is $d\notin\el1$ such that $\frac{d_{n}}{c_{n}}\to 0$.
Let $c''_{n}=\max_{i\geq n}d_{i}$ and $c'_{n}=(1+2^{-n})c_{n}''$.
Then $c''$ is obviously nonincreasing and therefore $c'$ is strictly decreasing
and since $c'\geq c$, we also have $c'\notin\el1$.
To prove that $\frac{c_{n}}{c'_{n}}\to\infty$, let $\eps>0$. Since $\frac{d_{n}}{c_{n}}\to 0$,
there is $N\in\nset$ such that $d_{i}<\eps c_{i}$ for all $i\geq N$. So for every $n\geq N$
$c''_{n}\leq\eps c_{n}$ and thus $\frac{c'_{n}}{c_{n}}\leq(1+2^{-n})\eps$ and
$\frac{c'_{n}}{c_{n}}\to 0$ follows, which is enough.
\end{proof}

\begin{lem}\label{ash2}
Let $\phi$ be a gauge and $s\in\SEQ$.
\begin{enum}
\item If $\phi\circ s\in\el1$,
then there is an increasing gauge $\psi\prec\phi$ such that $\psi\circ s\in\el1$.
\item If $\phi\circ s\notin\el1$,
then there is an increasing gauge $\psi\succ\phi$ such that $\psi\circ s\notin\el1$.
\end{enum}
\end{lem}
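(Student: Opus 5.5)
We apply Lemma~\ref{ash} to the sequence $c=\phi\circ s$ and then build a gauge $\psi$ whose values at the points $s_n$ are prescribed by the resulting $c'$. Since $\phi$ is non-decreasing and $s$ is strictly decreasing, $c_n=\phi(s_n)$ is a nonincreasing sequence of positive reals, so Lemma~\ref{ash} applies.

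In case (i) Lemma~\ref{ash}(i) gives $c'\in\SEQ\cap\el1$ with $c_n/c'_n\to0$. In case (ii) Lemma~\ref{ash}(ii) gives $c'\in\SEQ\setminus\el1$ with $c_n/c'_n\to\infty$. The latter is the wrong direction for (ii): we need $\psi\succ\phi$, meaning $\psi/\phi\to0$, so we want $c'_n/c_n\to0$. Reading the proof of Lemma~\ref{ash}(ii), it actually produces $c'_n/c_n\to0$, so we use that. Consistently, in (i) we want $\psi\prec\phi$, i.e.\ $\phi/\psi\to0$, so $\psi$ should be larger than $\phi$ along $s$; that is exactly $c_n/c'_n\to0$. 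In both cases we now aim to define an increasing gauge $\psi$ with $\psi(s_n)=c'_n$.

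\textbf{Construction of $\psi$.} Set $\psi(s_n)=c'_n$ and interpolate linearly on each $[s_{n+1},s_n]$. Put $\psi(0)=0$, and for $r\geq s_1$ let $\psi(r)=c'_1+(r-s_1)$. Because $c'$ is strictly decreasing and positive with $c'_n\to0$ (it lies in $\SEQ$), $\psi$ is continuous, strictly increasing, positive on $(0,\infty)$ and vanishes at $0$. Thus $\psi$ is an increasing gauge, and $\psi\circ s=c'$ is in $\el1$ in case (i) and not in $\el1$ in case (ii).

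\textbf{Comparison of $\psi$ with $\phi$ (the main obstacle).} The relation $c_n/c'_n\to0$ controls $\phi/\psi$ only at the points $s_n$, while $\prec$ requires control for all small $r$. For $r\in[s_{n+1},s_n]$ monotonicity gives $\phi(r)\leq\phi(s_n)=c_n$ and $\psi(r)\geq\psi(s_{n+1})=c'_{n+1}$. Hence
\[
\frac{\phi(r)}{\psi(r)}\leq\frac{c_n}{c'_{n+1}},
\]
and we need $c'_n/c'_{n+1}$ to be bounded. This is not automatic, so the plan is to modify $c'$ first. In case (i) replace $c'$ by $c''_n=\max(c'_n,2^{-n}c''_{n-1})$, or better, take a running construction such as $c''_n=\sup_{m\leq n}2^{m-n}c'_m$. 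Then $c''\geq c'$, $c''_{n+1}\geq c''_n/2$, and $\Abs{c''}_1\leq2\Abs{c'}_1$, so $c''\in\el1$ and $c''$ is still nonincreasing. A small perturbation such as adding $2^{-n}$ restores strict monotonicity if needed. With this, $\phi(r)/\psi(r)\leq 2c_n/c''_n\to0$, so $\psi\prec\phi$.

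In case (ii) the roles reverse. On $[s_{n+1},s_n]$ we have $\psi(r)\leq c'_n$ and $\phi(r)\geq c_{n+1}$, so we need $c_n/c_{n+1}$ bounded along the relevant indices. That cannot be arranged by changing $c$, since $c$ is given. Instead we set $\psi(r)=c'_{n+1}$ on $(s_{n+1},s_n]$, a right-continuous step function, or interpolate between $c'_{n+2}$ and $c'_{n+1}$. Then $\psi(r)\leq c'_{n+1}$ and $\phi(r)\geq c_{n+1}$, so
\[
\frac{\psi(r)}{\phi(r)}\leq\frac{c'_{n+1}}{c_{n+1}}\to0.
\]
Here $\psi(s_n)=c'_n$ still holds, so $\psi\circ s\notin\el1$. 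Finally, $\psi$ is made strictly increasing by adding a tiny summable correction such as $\min(r,2^{-n})$-type terms, which does not affect the ratio limit or membership in $\el1$.
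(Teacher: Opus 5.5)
Your proof is correct, provided that in (ii) you use the interpolated version. Its architecture is the paper's: apply Lemma~\ref{ash} to $c=\phi\circ s$, build a piecewise linear increasing $\psi$ whose values at the points $s_n$ come from $c'$, and compare $\psi$ with $\phi$ on each $[s_{n+1},s_n]$ via monotonicity. Where you differ is in how you absorb the index mismatch between the two endpoints.

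In (i) the paper just shifts indices, placing the breakpoints at $\psi(s_{n+1})=c'_n$. Then on $[s_{n+1},s_n]$ one has $\psi\geq c'_n$ and $\phi\leq c_n$ directly, and $\psi\circ s$ stays summable because it is essentially a shift of $c'$. Your regularization $c''_n=\max_{m\leq n}2^{m-n}c'_m$ is valid: $c''_{n+1}=\max(c'_{n+1},c''_n/2)$, so $c''$ is strictly decreasing, $c''\geq c'$, $c''_{n+1}\geq c''_n/2$ and $\Abs{c''}_1\leq2\Abs{c'}_1$. It is, however, extra work that the shift avoids.

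In (ii) your interpolation with $\psi(s_n)=c'_{n+1}$ is the mirror image of that shift. It is slightly simpler than the paper's choice of values $a_n$ with $\max(c'_n/2,c'_{n+1})\leq a_n<c'_n$: non-summability of $\psi\circ s$ comes for free from the shift, so the constraint $a_n\geq c'_n/2$ is not needed. Your explicit interval-wise bounds are also what actually justifies passing from the points $s_n$ to all small $r$, a step the paper's displayed chains of $\liminf$/$\limsup$ leave implicit.

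A few slips should be cleaned up.
\begin{enumerate}
\item Your remark that Lemma~\ref{ash}(ii) goes in the wrong direction is a misreading: $c_n/c'_n\to\infty$ is literally the statement $c'_n/c_n\to0$.
\item The step function equal to $c'_{n+1}$ on $(s_{n+1},s_n]$ is left-continuous, not right-continuous (its right limit at $s_n$ is $c'_n$), so it is not a gauge in the paper's sense.
\item For that step function, and equally for the interpolated version, $\psi(s_n)=c'_{n+1}$, not $c'_n$. This is harmless, since a shift of a non-summable sequence is non-summable.
\item The proposed strictifying correction by ``$\min(r,2^{-n})$-type terms'' is unsafe. Any additive correction must be $o(\phi)$, and $\phi$ can be tiny: $\phi(r)=r^2$ with $s_n=n^{-1/2}$ gives $\phi\circ s\notin\el1$, yet adding $r$ makes $\psi/\phi\to\infty$ and destroys $\psi\succ\phi$.
\end{enumerate}
Simply discard the step-function variant. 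The interpolated $\psi$ is already continuous and strictly increasing and needs no correction.
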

\begin{proof}
(i) Let $c=\phi\circ s$ and let $c'$ be the sequence from the above Lemma~\ref{ash}(i).
Let $\psi$ be a piecewise linear function with breakpoints $\psi(s_{n+1})=c'_n$, $n\in\Nset$.
Since $c'$ is strictly decreasing, $\psi$ is an increasing gauge.
Since $c'\in\el1$, we have $\psi\circ s\in\el1$.
Since $\frac{c'_{n}}{c_{n}}\to\infty$, we have
\[
  \liminf_{r\to0}\frac{\psi(r)}{\phi(r)}
  \geq\liminf_{n\to\infty}\frac{\psi(s_{n+1})}{\phi(s_{n+1})}
  \geq\liminf_{n\to\infty}\frac{c'_n}{\phi(s_n)}
  =\liminf_{n\to\infty}\frac{c'_n}{c_n}=\infty,
\]
so $\psi\prec\phi$.

(ii) Let $c=\phi\circ s$ and let $c'$ be the sequence from the above Lemma~\ref{ash}(ii).
For each $n$ choose $a_n$ such that
$\max(\frac{c'_n}{2},c'_{n+1})\leq a_n<c'_n$ and define
$\psi$ as follows: the graph of $\psi$ on the interval $[s_{n},s_{n-1})$ is a segment with
endpoints $(s_n,a_n)$ and $(s_{n-1},c'_n)$.
Since $a_n<c_n'$, $\psi$ is an increasing gauge.
Since $c'\notin\el1$ and $a_n\geq \frac{c'_n}{2}$, we have $\psi\circ s\notin\el1$.
Since $\frac{c_{n}}{c'_{n}}\to\infty$, we have
\[
  \lim_{r\to0}\frac{\psi(r)}{\phi(r)}\leq\limsup_{n\to\infty}\frac{\psi(s_{n-1})}{\phi(s_n)}
  =\limsup_{n\to\infty}\frac{c'_n}{c_n}=0,
\]
so $\psi\succ\phi$.
\end{proof}

\begin{lem}\label{doubling2}
If $\phi$ is a $2^\alpha$-doubling gauge, then $\phi\prece\alpha$.
\end{lem}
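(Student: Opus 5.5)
The plan is to iterate the doubling inequality down a dyadic scale and then interpolate between consecutive dyadic scales. By hypothesis there is $R>0$ such that $\phi(2r)\leq2^\alpha\phi(r)$ for all $r\in(0,R]$. We must show that $\limsup_{r\to0+}r^\alpha/\phi(r)<\infty$, i.e.\ that $\phi(r)\geq c\,r^\alpha$ for some $c>0$ and all small $r$. Recall that the paper identifies $\alpha$ with the gauge $r\mapsto r^\alpha$, so $\phi\prece\alpha$ means exactly $\limsup r^\alpha/\phi(r)<\infty$.

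Fix $r_0=R$. For any $r\in(0,R]$, choose $k\in\Nset$ with $2^{-k-1}R<r\leq2^{-k}R$. Applying the doubling inequality $k$ times along the chain $2^{-k}R,2^{-k+1}R,\dots,R$ (every point of the chain lies in $(0,R]$) gives
\[
  \phi(R)\leq 2^{k\alpha}\phi(2^{-k}R).
\]
We now pass from the dyadic point $2^{-k}R$ down to $r$ at the cost of one more doubling step. Since $2r>2^{-k}R$ and $r\leq R$, monotonicity of $\phi$ gives $\phi(2^{-k}R)\leq\phi(2r)\leq2^\alpha\phi(r)$. Combining the two inequalities yields
\[
  \phi(R)\leq2^{(k+1)\alpha}\phi(r).
\]
Finally, $2^{-(k+1)}<r/R$ implies $2^{(k+1)\alpha}\cdot r^\alpha\cdot R^{-\alpha}>1$, hence $2^{(k+1)\alpha}<(2R/r)^{\alpha}$ is not quite the right direction, so the constants need care. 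The correct estimate is $2^{(k+1)\alpha}\leq(2R/r)^\alpha$, which follows from $2^{-k}R\geq r$, i.e.\ $2^{k}\leq R/r$. This gives $\phi(r)\geq\phi(R)(2R)^{-\alpha}r^\alpha$, so $r^\alpha/\phi(r)\leq(2R)^\alpha/\phi(R)$, which is finite because $\phi(R)>0$.

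There is no real obstacle here. The only points needing care are that every point of the chain stays inside $(0,R]$, so the doubling inequality applies at each step, and that the edge case $\alpha=0$ is harmless: the same argument then shows that $\phi$ is bounded below near $0$. The argument uses only monotonicity, not right-continuity.
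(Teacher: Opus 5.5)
Your proof is correct and is essentially the paper's argument: iterate the doubling inequality along a dyadic chain linking $r$ to a fixed reference scale, then use monotonicity for the last half-step, which gives $r^\alpha/\phi(r)\leq(2R)^\alpha/\phi(R)$; the paper uses reference scale $1$ and so tacitly assumes $\phi(2r)\leq2^\alpha\phi(r)$ for all $r\leq1$, while you use the $R$ from the definition, which is if anything slightly more careful. Delete the garbled sentence calling an estimate ``not quite the right direction'' and keep only the clean deduction $2^k\leq R/r\Rightarrow2^{(k+1)\alpha}\leq(2R/r)^\alpha$.
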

\begin{proof}
Let $r>0$. There is $n\in\Nset$ such that $1\leq 2^n r\leq 2$. Since 
$\phi$ is $2^\alpha$-doubling, recursion yields
$\phi(2^nr)\leq2^{\alpha n}\phi(r)$.
Therefore,
\[
  \frac{r^\alpha}{\phi(r)}\leq\frac{r^\alpha}{\phi(2^nr)/2^{n\alpha}}\leq 
  \frac{(2^nr)^\alpha}{\phi(1)}\leq\frac{2^\alpha}{\phi(1)}
\]
and $\phi\prece\alpha$ follows.
\end{proof}
\begin{lem}\label{doubling11}
For every $L$-doubling gauge $\phi$ there is a continuous, strictly increasing,
strictly $L$-doubling gauge $\psi\approx\phi$.
\end{lem}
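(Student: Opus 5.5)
We build $\psi$ by sampling $\phi$ at a geometric grid and interpolating, so that continuity and strict monotonicity are automatic and the doubling inequality survives up to a factor that we then absorb. Fix $R>0$ with $\phi(2r)\leq L\phi(r)$ for $r\in(0,R]$ and set $r_n=2^{-n}R$.

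First, replace $\phi$ by a gauge with sharp doubling on the grid. Put $a_n=\phi(r_n)$, so that $a_n\leq a_{n-1}\leq La_n$. We want a strictly decreasing sequence $b_n\to0$ with $b_n\asymp a_n$ and $b_{n-1}<Lb_n$. Two natural choices are $b_n=a_n(1+2^{-n})$, or a slight perturbation $b_n=a_n\theta^{?}$. The difficulty is that $a_{n-1}=La_n$ may occur, so some slack is needed. The plan is to take $\psi$ with $\psi\approx\phi$ and doubling constant \emph{strictly} below $L$ at small scales. We get this by using $L'<L$ close to $L$ only when $L>1$ allows it, which fails when $\phi$ is exactly $L$-doubling.

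A cleaner route is to define $\psi(r)=\frac1{\log(1/r)}\cdot$(something)? This is not good either. Instead, set
\[
\psi(r)=\int_{r/2}^{r}\frac{\phi(t)}{t}\,dt\Big/\log 2\cdot(1-\eta(r)),
\]
with $\eta$ a small correction. We first verify that $\Phi(r)=\frac1{\log2}\int_{r/2}^{r}\phi(t)\,dt/t$ is continuous, nondecreasing and satisfies $\phi(r/2)\leq\Phi(r)\leq\phi(r)$, so $\Phi\approx\phi$ (using the doubling of $\phi$). It also satisfies $\Phi(2r)=\frac1{\log2}\int_{r/2}^{r}\phi(2u)\,du/u\leq L\Phi(r)$. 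Next we make $\Phi$ strictly increasing via $\Phi_1(r)=\Phi(r)+r^{p}$ with $p$ large, specifically $2^p\geq$ the needed bound. Since $\phi\succcurlyeq$ some power by the doubling property (as in Lemma~\ref{doubling2}, $\phi(r)\gtrsim r^{\log L}$), taking $p>\log L$ ensures $r^p\prec\phi$, so $\Phi_1\approx\phi$.

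Strictness is obtained by multiplying by a slowly varying strictly increasing factor. Let $\psi(r)=\Phi_1(r)\cdot g(r)$, where $g(r)=1+1/\log(1/r)$ for small $r$; this $g$ is decreasing as $r$ decreases, bounded between $1$ and $2$, and satisfies $g(2r)/g(r)<1$ for small $r$, because $\log(1/(2r))<\log(1/r)$ makes $g(2r)>g(r)$. That inequality runs the wrong way, so instead take $g(r)=1-1/\log(1/r)$, which is increasing in $r$ with $g(2r)<g(r)$. Then $\psi(2r)=\Phi_1(2r)g(2r)<L\Phi_1(r)g(r)=L\psi(r)$, provided $\Phi_1(2r)\leq L\Phi_1(r)$. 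This holds for the $\Phi$ part as shown above, and for the $r^p$ part it requires $2^p\leq L$. That requirement conflicts with the choice of $p$, so instead we use the strictly increasing add-on $\epsilon\Phi(r)\cdot$(nothing) and get strict monotonicity from $g$ itself: $\Phi g$ is strictly increasing wherever $\Phi$ is nondecreasing and positive, since $g$ is strictly increasing. This removes the need for $r^p$ altogether.

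Finally, $\psi$ is extended beyond the small-scale range to a continuous, strictly increasing function. The main obstacle is the bookkeeping in the doubling inequality for $\Phi$ near the threshold $R$, together with checking that $\psi\approx\phi$, which uses the bound $\phi(r/2)\geq\phi(r)/L$.
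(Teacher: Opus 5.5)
Your proposal has a genuine gap: the final step does not produce a gauge.

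\textbf{Where it fails.} To turn $\Phi(2r)\le L\Phi(r)$ into a strict inequality you need $g(2r)<g(r)$. To make $\Phi g$ strictly increasing on intervals where $\Phi$ is constant you need $g$ strictly increasing. No function does both, since a strictly increasing $g$ has $g(2r)>g(r)$.

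Your $g(r)=1-1/\log(1/r)$ is in fact strictly \emph{decreasing} in $r$. So the claim ``$\Phi g$ is strictly increasing since $g$ is strictly increasing'' is false.

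This matters because $\Phi$ really has flat pieces. Take the $L$-doubling gauge with $\phi=L^{-n}$ on $[4^{-n},4^{-n+1})$. Then $\Phi\equiv L^{-n}$ on $[2\cdot 4^{-n},4^{-n+1})$, and there your $\psi=\Phi g$ strictly decreases. So $\psi$ is not even non-decreasing.

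Choosing an increasing $g$ instead breaks strict doubling already for $\phi(r)=r^{\log L}$. There $\Phi(2r)=L\Phi(r)$ exactly, so $\psi(2r)>L\psi(r)$.

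Adding a small $r^p$ does not repair monotonicity of $\Phi g$ either. On the flat pieces of the example above, $\Phi\,\abs{g'}$ is of order $4^nL^{-n}/n^2$, while $(r^{\log L})'$ is only of order $4^nL^{-2n}$.

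\textbf{The missing idea.} The slack factor must depend on the \emph{value} of the gauge, not on $r$. This is what the paper does:
\begin{enum}
\item It reduces to $L=2$ by passing to $\phi^\beta$ with $\beta=\log 2/\log L$.
\item It sets $\phi_1=2^{-n}(1-2^{-n})$ on the level interval $[r_n,r_{n-1})$, where $2^{-n}\le\phi<2^{-n+1}$. This is monotone because consecutive levels differ by the factor $2$, and strictly $2$-doubling because of the factor $1-2^{-n}$.
\item It adds $r$, which is exactly $2$-doubling and is bounded by a multiple of $\phi_1$ by Lemma~\ref{doubling2}.
\item It interpolates linearly on the dyadic grid and checks strict doubling by direct computation.
\end{enum}

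\textbf{How to repair your route.} Your averaged $\Phi$ can be fixed in the same spirit.

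First, your worry about $p$ was unfounded. Take $p=\log L$. Then $r^p$ is exactly $L$-doubling, and Lemma~\ref{doubling2} gives $r^p\le C\phi(r)$ for small $r$. That is all $\Phi_1=\Phi+r^p\approx\phi$ requires; you never needed $r^p=o(\phi)$. So $\Phi_1$ is continuous, strictly increasing, $L$-doubling and $\Phi_1\approx\phi$.

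Then put $\psi=\tau\circ\Phi_1$ with $\tau(t)=t\bigl(1-1/\log(1/t)\bigr)$, which is strictly increasing near $0$ and satisfies $t/2\le\tau(t)\le t$ there. So $\psi$ is continuous, strictly increasing and $\psi\approx\phi$. Since $\Phi_1(2r)>\Phi_1(r)$, the correction factor at $\Phi_1(2r)$ is smaller than at $\Phi_1(r)$. Hence $\psi(2r)/\psi(r)<\Phi_1(2r)/\Phi_1(r)\le L$.
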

\begin{proof}
We first prove the lemma for $L=2$.
Let $r_n=\inf\{r:\phi(r)\geq2^{-n}\}$. Since $\phi$ is $2$-doubling, we have
$r_{n+1}\leq \frac12 r_n$. Indeed, if $r_{n+1}> \frac12 r_n$, then $\phi(\frac12 r_n)<\frac1{2^{n+1}}$, so $(\phi(r_n)\leq 2\phi(\frac12r_n)<\frac1{2^n}$, a contradiction. For $r\in[r_n,r_{n-1})$ define $\phi_1(r)=2^{-n}(1-2^{-n}) = \frac{2^n-1}{4^n}$.
Routine calculation proves that $\phi_1$ is strictly $2$-doubling and $\phi_1\approx\phi$.

Let $\phi_2(r)=\phi_1(r)+r$. It is easy to check that $\phi_2$
is strictly $2$-doubling and it is clearly increasing.
Lemma~\ref{doubling2} yields $\phi_2\prece 1$ and consequently $\phi_2\approx\phi_1$.

Define $\psi$ as follows: Let $a_n=\phi_2(2^{-n})$. Note that $a_n<2a_{n+1}$
since $\phi_2$ is strictly $2$-doubling.
Let $\psi(2^{-n})=a_n$ for all $n$ and let the graph of $\psi$
on the interval $[2^{-n-1},2^{-n}]$ be the segment with endpoints $(2^{-n-1},a_{n+1})$
and $(2^{-n},a_{n})$. In particular, $\psi$ is strictly increasing and continuous.

For $2^{-n-1}\leq r\leq 2^{-n}$
\begin{align*}
  \tfrac12\phi_2(r)&\leq\tfrac12\phi_2(2^{-n})
  =\tfrac12 a_n\leq a_{n+1}=\psi(2^{-n-1})\leq\psi(r)\\
  &\leq\psi(2^{-n})=a_n\leq 2a_{n+1}\leq 2\phi_2(2^{-n-1})\leq 2\phi_2(r)
\end{align*}
so $\psi\approx\phi_2\approx\phi_1\approx\phi$.

It remains to show that $\psi$ is strictly $2$-doubling. The formula for $\psi$ in
$[2^{-n-1},2^{-n}]$ is
\begin{align*}
  \psi(r)&=2^{n+1}(a_n-a_{n+1})(r-\frac1{2^n})+a_{n}.
\end{align*}
Thus,
\begin{align*}
  \psi(2r)&=2^{n}(a_{n-1}-a_{n})(2r-\frac1{2^{n-1}})+a_{n-1}.
\end{align*}
Since $a_n<2a_{n+1}$ and $a_{n-1}<2a_n$, we have
\begin{align*}
  \psi&(2r)-2\psi(r)\\
  &=2^n (a_{n-1}-a_n)(2r-\frac1{2^{n-1}})+a_{n-1}-2^{n+2}(a_{n}-a_{n+1})(r-\frac1{2^{n}})-2a_n\\ 
  &=   a_{n-1}(2^{n+1}r-1)+a_n(4-3\cdot2^{n+1}r)+a_{n+1}(2\cdot2^{n+1}r-4)\\
  & = a_{n-1}(2^{n+1}r-1) -2a_n(2^{n+1}r-1) +a_n(2-2^{n+1}r)-2a_{n+1}(2-2^{n+1}r)\\
  &=(a_{n-1}-2a_n)(2^{n+1}r-1)+(a_n-2a_{n+1})(2-2^{n+1}r)<0.
\end{align*}
The proof for $L=2$ is complete. For $L\neq2$ let $\beta=\log 2/\log L$ and
consider the gauge $\phi'=\phi^\beta$. Then $\phi'$ is $2$-doubling,
so by the above there is $\psi'\approx\phi'$ continuous, strictly increasing and
strictly $2$-doubling.
Let $\psi=(\psi')^{1/\beta}$.
\end{proof}

\begin{lem}\label{2gauges}
Suppose that $X$ is analytic, $\zeta\prec\xi$ are gauges and any of the following
conditions is met: $X$ is ultrametric, $X$ is doubling, $\xi$ is doubling, $X=\el2$.
If $\hm^\xi(X)>0$, then $\NN(\hm^\zeta)\neq\NN(\hm^\xi)$.
\end{lem}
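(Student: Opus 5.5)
The plan is to find a set of positive $\hm^\xi$ measure that has $\sigma$-finite $\hm^\xi$ measure. Such a set witnesses the claim at once: since $\zeta\prec\xi$, the fact recalled in Section~\ref{sec:haus} gives $\NNs(\hm^\xi)\subs\NN(\hm^\zeta)$. So it suffices to produce $C\subs X$ with $0<\hm^\xi(C)<\infty$. Then $C\in\NN(\hm^\zeta)$ but $C\notin\NN(\hm^\xi)$, hence $\NN(\hm^\zeta)\neq\NN(\hm^\xi)$.

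In the first three cases ($X$ analytic and ultrametric, or doubling, or $\xi$ doubling), I will apply Howroyd's Theorem~\ref{howroyd} to the Borel set $E=X$. The hypothesis $\hm^\xi(X)>0$ then yields a compact $C\subs X$ with $0<\hm^\xi(C)<\infty$, and we are done by the previous paragraph.

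The case $X=\el2$ is the only one needing more care, since $\el2$ is neither doubling nor ultrametric and $\xi$ need not be doubling. Here I would not use the assumption $\hm^\xi(\el2)>0$ directly. Instead I would construct a Cantor cube inside $\el2$:
\begin{enum}
\item Pick $r\in\SEQ$ such that $\xi(r_n)\geq2^{-n}$ for all $n$, for instance $r_n$ slightly larger than $\inf\{u:\xi(u)\geq2^{-n}\}$, adjusted to be strictly decreasing. This uses only that $\xi(u)>0$ for $u>0$ together with right-continuity.
\item Lemma~\ref{cubes1} gives $\hm^\xi(\Cube(2,r))>0$.
\item $\Cube(2,r)$ is a compact ultrametric space, so by the previous case it contains a compact $C$ with $0<\hm^\xi(C)<\infty$.
\item By~\cite{MR695109}, $\Cube(2,r)$ embeds isometrically into $\el2$. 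Hausdorff measures are intrinsic to the metric, so the image of $C$ still has positive finite $\hm^\xi$ measure in $\el2$.
\end{enum}

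The only delicate points are the choice of $r$, which must satisfy the inequality $\xi(r_n)\geq2^{-n}$ while still being strictly decreasing to $0$, and the observation that Hausdorff measure does not depend on the ambient space under an isometric embedding. Both are routine.
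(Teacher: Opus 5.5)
\textbf{The key step has the orientation of $\prec$ reversed.} In the paper, $\zeta\prec\xi$ means $\limsup_{r\to0+}\xi(r)/\zeta(r)=0$, so $\xi$ is the \emph{smaller} gauge. The recalled fact $\phi\prec\psi\implies\NNs(\hm^\phi)\subs\NN(\hm^\psi)$, read with $\phi=\zeta$ and $\psi=\xi$, gives $\NNs(\hm^\zeta)\subs\NN(\hm^\xi)$, not $\NNs(\hm^\xi)\subs\NN(\hm^\zeta)$.

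In fact $\zeta\geq M\xi$ near $0$ for every $M$, so $\hm^\zeta(C)\geq M\hm^\xi(C)$ for all $M$. Hence a set $C$ with $0<\hm^\xi(C)<\infty$ has $\hm^\zeta(C)=\infty$: it lies in neither ideal and witnesses nothing. The ideals are nested as $\NN(\hm^\zeta)\subs\NN(\hm^\xi)$, so a witness must lie in $\NN(\hm^\xi)\setminus\NN(\hm^\zeta)$, for instance a set with $0<\hm^\zeta(C)<\infty$. This is also the orientation in which the lemma is applied. In Theorem~\ref{versus2}, $\psi\prec\phi$ with $\phi(r)/\psi(r)\to0$, and the conclusion needed is $\NN(\hm^\psi)\subsetneqq\NN(\hm^\phi)$.

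The paper's printed argument for the first three cases makes the same swap as yours. Its $\el2$ case, however, is oriented correctly. It chooses $r$ with $\liminf_n2^n\xi(r_n)=0<\liminf_n2^n\zeta(r_n)$, so by Lemma~\ref{cubes1} the cube $\Cube(2,r)$ lies in $\NN(\hm^\xi)\setminus\NN(\hm^\zeta)$.

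\textbf{Repair.} Apply Howroyd's Theorem~\ref{howroyd} to $\zeta$ instead of $\xi$. Since $\hm^\xi(X)>0$ forces $\hm^\zeta(X)>0$, when $X$ is ultrametric or doubling you get a compact $C$ with $0<\hm^\zeta(C)<\infty$, hence $\hm^\xi(C)=0$.

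With this correction your treatment of $\el2$ works and is a genuine alternative to the paper's. Your cube with $\xi(r_n)\geq2^{-n}$ has positive $\hm^\xi$, hence positive $\hm^\zeta$, measure, and the corrected ultrametric case produces the witness inside it. The paper instead tunes $r$ so that the cube itself is the witness.

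The case ``$\xi$ is doubling'' with $X$ arbitrary needs an extra idea, because Howroyd's theorem is then not available for $\zeta$. One way is to interpolate a doubling gauge. Put $g(t)=\inf_{0<u\leq t}\zeta(u)/\xi(u)$, which is nonincreasing and tends to $\infty$ at $0$, and set $\eta(r)=\sup_{0<t\leq r}\xi(t)g(t)$. Then:
\begin{enum}
\item $\eta\leq\zeta$;
\item $\xi(r)/\eta(r)\leq1/g(r)\to0$, and in particular $\eta\geq\xi$ near $0$, so $\hm^\eta(X)>0$;
\item $\xi(2r)\leq L\xi(r)$ gives $\eta(2r)\leq L\eta(r)$.
\end{enum}
After a right-continuous regularization, which preserves these properties, Howroyd's theorem for $\eta$ yields a compact $C$ with $0<\hm^\eta(C)<\infty$. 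Hence $\hm^\xi(C)=0<\hm^\eta(C)\leq\hm^\zeta(C)$.
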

\begin{proof}
This is a trivial consequence of Howroyd Theorem~\ref{howroyd}:
if $X$ is ultrametric or doubling or $\xi$ is doubling, then there is (a compact set)
$E\subs X$ such that $0<\hm^\xi(E)<\infty$ and since $\zeta\prec\xi$, we have
$\hm^\zeta(E)=0$, so $E$ witnesses $\NN(\hm^\zeta)\neq\NN(\hm^\xi)$.
It remains to handle the case $X=\el2$.
Choose $r\in\SEQ$ such that $\liminf_{n\to\infty}2^n\xi(r_n)=0$ while
$\liminf_{n\to\infty}2^n\zeta(r_n)>0$. By Lemma~\ref{cubes1}, the Cantor cube
$\Cube(2,r)$ witnesses $\NN(\hm_{\el2}^\zeta)\neq\NN(\hm_{\el2}^\xi)$, because
it isometrically embeds into $\el2$.
\end{proof}

\begin{lem}\label{2gauges2}
Suppose that $\phi\circ s\in\el1$. Then there are gauges $\zeta\prec\xi$
such that
\begin{enum}
\item $\zeta\prec\xi\prec\phi^\alpha$ for all $\alpha>1$,
\item $\zeta\circ s\in\el1$,
\item if $\phi$ is $L$-doubling and $Q>L$, then $\zeta,\xi$ are $Q$-doubling.
\end{enum}
\begin{proof}
We may suppose $\phi$ is continuous, strictly increasing and for all $n$, $\phi(s_n) \leq 1$.
For $\beta>1$ let
\[
  \tau_\beta(t)=\frac{t}{(\log\frac1t)^\beta}, \qquad \psi_\beta=\tau_\beta\circ\phi.
\]
and define $\zeta=\psi_3$, $\xi=\psi_2$.

For each $\alpha>1$
\[
  \lim_{r\to0}\frac{\phi^\alpha(r)}{\psi_\beta(r)}=\lim_{t\to0}\frac{t^\alpha}{\tau_\beta(t)}
  =\lim_{t\to0}t^{\alpha-1}(\log\tfrac1t)^\beta=0,
\]
hence $\psi_\beta\prec\phi^\alpha$ and (i) follows.
On the other hand, since
\[
  \sum_{n\in\Nset}\psi_\beta(s_n)=\sum_{n\in\Nset}\tau_\beta(\phi(s_n))=
  \sum_{n\in\Nset}\frac{\phi(s_n)}{(\log \frac1{\phi(s_n)})^\beta}\leq \sum_{n\in\Nset}\phi(s_n)
  <\infty,
\]
i.e., $\psi_\beta\circ s\in\el1$ and (ii) follows.

For any $\beta>1$ and for all $r>0$ small enough
\[
  \frac{\psi_\beta(2r)}{\psi_\beta(r)}=
  \frac{\phi(2r)}{\phi(r)}\Bigl(\frac{\log\phi(r)}{\log\phi(2r)}\Bigr)^\beta
  \leq L\Bigl(\frac{\log\phi(r)}{\log\phi(r)+1}\Bigr)^\beta<Q
\]
since the term $\bigl(\frac{\log\phi(r)}{\log\phi(r)+1}\bigr)^\beta$ is close to $1$ for
$r$ small enough. We proved (iii).
\end{proof}
\end{lem}

\begin{lem}\label{ideals}
For each gauge $\phi$ and each $2$-doubling gauge $\psi$
\[
\phi\prece\psi\Longleftrightarrow\NN(\hm^\phi_\Rset)\subs\NN(\hm^\psi_\Rset).
\]
\end{lem}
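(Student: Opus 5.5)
The forward implication is the standard one and holds for arbitrary gauges. Suppose $\phi\prece\psi$, i.e.\ there are $C$ and $R>0$ with $\psi(r)\leq C\phi(r)$ for $r\in(0,R]$. Let $\hm^\phi(E)=0$ and $\del<R$. Every $\del$-fine cover $\{E_n\}$ then satisfies $\sum\psi(\diam E_n)\leq C\sum\phi(\diam E_n)$. Hence $\hm^\psi_\del(E)\leq C\hm^\phi_\del(E)=0$, and letting $\del\to0$ gives $\hm^\psi(E)=0$.

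For the reverse implication I argue by contraposition. Assume $\phi\not\prece\psi$, i.e.\ $\limsup_{r\to0}\psi(r)/\phi(r)=\infty$. Pick $\rho_k\downarrow0$ with $\psi(\rho_k)\geq 4^k\phi(\rho_k)$. I want a compact set $K\subs\Rset$ with $\hm^\phi(K)=0$ but $\hm^\psi(K)>0$. First, by Lemma~\ref{doubling11}, I replace $\psi$ by an equivalent continuous, strictly increasing, strictly $2$-doubling gauge. This leaves both $\NN(\hm^\psi)$ and the hypothesis $\phi\not\prece\psi$ unchanged, up to constants absorbed into the $4^k$.

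Next I build a symmetric-type Cantor set adapted to $\psi$. The basic block is the one from Lemma~\ref{cubes2}(ii): with $r_n=\psi^{-1}(2^{-n})$, strict $2$-doubling gives $r_{n+1}<\frac12r_n$, so $\cube(r)$ is defined and $\hm^\psi(\cube(r))\geq\frac14$. I modify the construction so that on sparse levels $n_k$ the cylinder length $r_{n_k}$ is close to $\rho_k$, i.e.\ $\psi(r_{n_k})\approx2^{-n_k}$ with $r_{n_k}\approx\rho_k$. Concretely, I choose $n_k$ with $2^{-n_k}\approx\psi(\rho_k)$ and let the lengths interpolate between consecutive special levels so that the $\psi$-mass condition $2^n\psi(\text{length}_n)\approx1$ holds at every level. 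If this inserts some slack in the number of pieces versus lengths, I compensate by allowing the ratio of consecutive lengths to vary within $(0,\frac12)$. The mass distribution principle, with the uniform measure giving mass $2^{-n}$ to level-$n$ intervals, then yields $\hm^\psi(K)>0$, as in Lemma~\ref{cubes2}(ii). On the other hand, covering $K$ by its $2^{n_k}$ level-$n_k$ intervals gives $\hm^\phi(K)\lesssim 2^{n_k}\phi(\rho_k)\leq2^{n_k}\psi(\rho_k)4^{-k}\approx4^{-k}\to0$. So $K\in\NN(\hm^\phi)\setminus\NN(\hm^\psi)$.

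The main obstacle is the middle step: I need the Cantor set to hit the prescribed scales $\rho_k$ exactly while the $\psi$-mass estimate stays uniform at every scale. Using the fixed levels $r_n=\psi^{-1}(2^{-n})$ only gives $\rho_k\in[r_{n+1},r_n)$, and a bare comparison of $\phi(r_{n})$ with $\phi(\rho_k)$ fails without doubling of $\phi$. I would first try the shortcut of covering by level-$(n+1)$ intervals of length $r_{n+1}\leq\rho_k$. This gives $2^{n+1}\phi(r_{n+1})\leq2^{n+1}\phi(\rho_k)\leq 2^{n+1}4^{-k}\psi(\rho_k)\leq 2^{n+1}4^{-k}2^{-n}=2\cdot4^{-k}$, using $\psi(\rho_k)\leq\psi(r_n)=2^{-n}$ and monotonicity of $\phi$. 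If this works, no modification of the standard $\cube(r)$ is needed and the proof reduces to Lemma~\ref{cubes2}(ii).
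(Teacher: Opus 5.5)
Your closing ``shortcut'' is correct and is exactly the paper's argument. After normalizing $\psi$ via Lemma~\ref{doubling11}, take $C=\cube(r)$ with $r_n=\psi^{-1}(2^{-n})$, so that Lemma~\ref{cubes2}(ii) gives $\hm^\psi(C)\geq\frac14$. Then, when $r_{n+1}\leq\rho_k<r_n$, cover $C$ by its $2^{n+1}$ level-$(n+1)$ intervals; this uses only the monotonicity of $\phi$ at the smaller scale. The ``main obstacle'' you describe is therefore not a real obstacle, and the modified Cantor-set construction in your middle paragraph can simply be dropped.
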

\begin{proof}
The forward implication is trivial.

By Lemma~\ref{doubling11}, we may suppose that $\psi$ is strictly $2$-doubling, continuous and
strictly increasing. To prove the reverse implication, suppose that
$\phi\not\prece\psi$ and let $r_n=\psi^{-1}(2^{-n})$. Since $\psi$ is strictly $2$-doubling,
we have $r_{n+1}<\frac12 r_n$ for all $n$.
Let $C=\cube(r)\subs[0,1]$ be the symmetric Cantor set as defined in Section~\ref{sec:haus}.
Lemma~\ref{cubes2}(ii) yields $0<\hm^\psi(C)$.
Now fix $\eps, \del>0$. Since $\phi\not\prece\psi$,
there is $r<\del$ such that $\phi(r)<\eps\psi(r)$.
Let $r_n\leq r<r_{n-1}$. Then
\[
  \phi(r_n)\leq\phi(r)<\eps\psi(r)\leq\eps\,2^{-(n-1)}.
\]
Let $\VV=\{I_p:p\in2^n\}$. It is obviously a cover of $C$ by sets of diameter $r_n$.
Therefore,
\[
  \hm^\phi_\del(C)\leq\sum_{V\in\VV}\phi(\diam V)
  =\sum_{p\in 2^n}\phi(\diam I_p)
  =2^n\phi(r_n)<2^n\eps\,2^{-(n-1)}
  =2\eps.
\]
Since $\eps$ and $\del$ were arbitrary, it follows that $\hm^\phi(C)=0$.
Overall, $C\in\NN(\hm^\phi)\setminus\NN(\hm^\psi)$ witnessing
$\NN(\hm^\phi)\nsubseteq\NN(\hm^\psi)$.
\end{proof}

\begin{lem}\label{dim2}
Let $\alpha\in[0,1)$. Let $E\subs\Rset$, $\hdim E\leq\alpha$.
Then there is a continuous gauge $\phi$ such that $\hm^\phi(E)=0$ and
$\phi\prec\beta$ for all $\beta>\alpha$.
\end{lem}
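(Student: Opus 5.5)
Since $\hdim E\leq\alpha$, we have $\hm^\beta(E)=0$ for every $\beta>\alpha$. Pick a strictly decreasing sequence $\beta_k\searrow\alpha$ with $\beta_k<1$; this is possible because $\alpha<1$. The plan is to glue the power gauges $r^{\beta_k}$ into a single gauge that decays slightly faster than every $r^\beta$, $\beta>\alpha$, and whose measure still vanishes on $E$.

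For each $k$ use $\hm^{\beta_k}(E)=0$ to choose a countable cover $\mathcal E_k$ of $E$ with
\[
\sum_{A\in\mathcal E_k}(\diam A)^{\beta_k}<2^{-k}.
\]
Next choose thresholds $1>\rho_1>\rho_2>\dots\to0$ recursively so that three conditions hold:
\begin{enumerate}[(a)]
\item all sets in $\mathcal E_k$ have diameter less than $\rho_k$;
\item $\rho_{k+1}$ is small compared with $\rho_k$, in the sense that $\rho_{k+1}^{\beta_k}\leq k^{-1}\rho_{k+1}^{\beta_{k+1}}$;
\item the transition from one power to the next is continuous, as described below.
\end{enumerate}
Condition (a) is automatic, since the covering sets can be taken to have small diameter. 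Condition (b) holds for small $\rho_{k+1}$ because $\beta_k>\beta_{k+1}$.

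Define $\phi$ on $[\rho_{k+1},\rho_k]$ as a continuous nondecreasing interpolation between $c_k r^{\beta_k}$ and $c_{k+1}r^{\beta_{k+1}}$. The natural first choice is
\[
\phi(r)=\max\bigl(c_{k+1}r^{\beta_{k+1}},\,\min(c_k r^{\beta_k},\dots)\bigr).
\]
A cleaner option is to use, on each interval, whichever of the two powers is smaller, with constants $c_k\leq1$ adjusted for continuity. The essential point is $\phi(r)\leq r^{\beta_k}$ for $r\leq\rho_k$, and this is what makes the measure vanish. Indeed, every set of $\mathcal E_k$ has diameter below $\rho_k$, so
\[
\hm^\phi_{\rho_k}(E)\leq\sum_{A\in\mathcal E_k}\phi(\diam A)\leq 2^{-k}.
\]
Letting $k\to\infty$ gives $\hm^\phi(E)=0$.

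For $\phi\prec\beta$ with $\beta>\alpha$, fix $m$ with $\beta_m<\beta$. On $[\rho_{k+1},\rho_k]$ with $k\geq m$ we have $\phi(r)\geq c_{k+1}r^{\beta_{k+1}}$. Hence
\[
\frac{r^\beta}{\phi(r)}\leq\frac{r^{\beta-\beta_{k+1}}}{c_{k+1}},
\]
and this tends to $0$ provided the constants $c_k$ do not decay too fast relative to $\rho_k$. So the real requirement is a two-sided control: $\phi(r)\leq r^{\beta_k}$ near $\rho_k$, which gives nullity, together with $\phi(r)\geq c_{k+1}r^{\beta_{k+1}}$, where $c_{k+1}\rho_k^{\,\beta-\beta_{k+1}}$ is small, which gives domination. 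I expect this to be the main obstacle: fixing the constants and the thresholds in the right order.

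The order I would use is to take $c_k=1$ for all $k$ and $\phi(r)=\min_k r^{\beta_k}$ restricted appropriately. Simplest is
\[
\phi(r)=r^{\beta_k}\ \text{ for } r\in[\rho_{k+1},\rho_k),
\]
made continuous at $\rho_{k+1}$ by inserting a short linear ramp, which is harmless because $\rho_{k+1}^{\beta_{k+1}}>\rho_{k+1}^{\beta_k}$. Nullity then uses only $\phi(r)\leq r^{\beta_k}$ for $r<\rho_k$. This holds because on lower intervals the exponent is $\beta_j$ with $j\geq k$, so the value is at most $r^{\beta_j}\leq r^{\beta_k}$; one has to be careful that the ramp stays below $r^{\beta_k}$. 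Domination is $r^\beta/\phi(r)\leq r^{\beta-\beta_k}\to0$ on $[\rho_{k+1},\rho_k)$, once $\beta_k<\beta$.

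Monotonicity of $\phi$ across each junction is exactly the issue handled by the ramp. So the remaining work is to choose each ramp so that it is increasing and stays between $r^{\beta_{k+1}}$ and $r^{\beta_k}$, which only requires $r<1$. With these choices, condition (b) is not actually needed, and I would drop it.
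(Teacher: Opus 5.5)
\textbf{There is a genuine gap in the nullity step.} You assert $\phi(r)\le r^{\beta_k}$ for $r<\rho_k$ because ``on lower intervals the exponent is $\beta_j$ with $j\ge k$, so the value is at most $r^{\beta_j}\le r^{\beta_k}$''. But for $0<r<1$ and $\beta_j<\beta_k$ one has $r^{\beta_j}>r^{\beta_k}$; your own condition (b) uses this direction.

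Better ramps or thresholds cannot repair this. Since $\beta_k>\alpha$, the lemma demands $\phi\prec\beta_k$, i.e.\ $\phi(r)/r^{\beta_k}\to\infty$. Your domination estimate rests on exactly that: $\phi(r)=r^{\beta_j}$ on $[\rho_{j+1},\rho_j)$ with $\beta_j\to\alpha$. So the bound $\sum_{A\in\mathcal E_k}\phi(\diam A)\le 2^{-k}$ has no justification. The cover $\mathcal E_k$ is countable, its diameters accumulate at $0$, and on its small members $\phi$ exceeds $(\diam A)^{\beta_k}$ by an unbounded factor.

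Worse, suppose the diameters $d_i$ of $\mathcal E_k$ satisfy $\sum_i d_i^{\beta_k}<\infty$ but $\sum_i d_i^{\beta}=\infty$ for every $\beta<\beta_k$. Nothing in your choice of $\mathcal E_k$ rules this out. Then $\sum_{A\in\mathcal E_k}\phi(\diam A)=\infty$ for \emph{every} gauge satisfying the conclusion, because $\phi(r)\ge r^\beta$ near $0$ for any $\beta\in(\alpha,\beta_k)$. One cover per exponent, fixed in advance, cannot work; you need covers that are efficient for all exponents $\beta>\alpha$ simultaneously.

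That missing ingredient is what the paper imports from the Rogers--Taylor theorem. With $\phi_n(r)=r^{\alpha+1/n}$ one has $\phi_{n+1}\prec\phi_n$ and $\hm^{\phi_n}(E)=0$ for all $n$, and the theorem yields a single gauge $\psi$ with $\hm^\psi(E)=0$ and $\psi\prec\phi_n$ for all $n$. The paper then only builds a continuous $\phi\le\psi$. This $\phi$ follows $\phi_{n+1}$ and is held constant at $\phi_n(s_n)$ on $[\phi_{n+1}^{-1}(\phi_n(s_n)),s_n]$. Nullity is then inherited from $\hm^\phi\le\hm^\psi$ and never has to be checked on covers.

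Those plateaus are also the right fix for your monotonicity problem. At $\rho_{k+1}$ the left limit $\rho_{k+1}^{\beta_{k+1}}$ exceeds the right value $\rho_{k+1}^{\beta_k}$. So $\phi$ must be held at level $\rho_{k+1}^{\beta_k}$ on $[\rho_{k+1}^{\beta_k/\beta_{k+1}},\rho_{k+1}]$, which is not a short ramp.

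Your direct scheme does go through when $E$ is $\sigma$-compact. At stage $k$, take finite $\rho_k$-fine covers of the first $k$ compact pieces and put $\rho_{k+1}$ below all their diameters. Then these diameters lie in $[\rho_{k+1},\rho_k)$, where $\phi\le r^{\beta_k}$ (given $\rho_{k+1}\le\rho_k^{\beta_{k-1}/\beta_k}$). It does not go through in general. For example, the Liouville numbers form a dense $G_\delta$ set of Hausdorff dimension $0$ that is not $\sigma$-compact: its compact subsets are Lebesgue null, hence nowhere dense, and Baire's theorem forbids covering a dense $G_\delta$ by countably many of them.
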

\begin{proof}
For each $n$ let $\phi_n(r)=r^{\alpha+1/n}$. Then clearly $\phi_{n+1}\prec\phi_n$
and hence, by~\cite{MR160860}, there is a gauge $\psi$ such that $\hm^\psi(E)=0$ and
$\psi\prec\phi_n$ for all $n$.
We just have to modify $\psi$ to make it continuous.
By the latter property, there is a sequence $r_n\searrow0$ such that
$\psi\rest[r_{n+1},r_n]\geq\phi_n$.
Let $s_0=r_0$, $s_1=r_1$ and if $r_n$ has been defined, let
\begin{itemyze}
\item $s'_n=\phi_{n+1}^{-1}(\phi_n(s_n))$,
\item $s_{n+1}<\min(r_{n+1},s'_n)$.
\end{itemyze}
Let
\[
  \phi(r)=
  \begin{cases}
    \phi_n(s_n)   &\text{for $r\in[s'_n,s_n]$,}\\
    \phi_{n+1}(r) &\text{for $r\in[s_{n+1},s'_n]$.}
  \end{cases}
\]
Then $\phi\prece\phi_n$ for all $n$ and $\phi\leq\psi$,
hence $\hm^\phi(E)\leq\hm^\psi(E)=0$, and $\phi$ is clearly continuous.
\end{proof}

\section{Remarks and questions}\label{sec:rem}

\subsection*{Dominated compact sets are typical}
D'Aniello and Maiuriello~\cite{MR4122477} proved that compact microscopic sets
are \emph{typical} among compact subsets of a given compact metric space. In more detail,
if $X$ is a compact metric space, let $\mc K(X)$ be the family of all nonempty compact
subsets of $X$. Provide $\mc K(X)$ with the \emph{Hausdorff metric}.
defined by
Then $(\mc K(X),\hausm)$ is a compact metric space.
Recall that a set in a metric space is \emph{meager} if it is
a countable union of nowhere dense sets and \emph{comeager} if it is a complement of
a meager set. A property of $K\in\mc K(X)$ is \emph{typical} if the compact subsets of $X$
that posses the property form a comeager set in $\mc K(X)$.
D'Aniello and Maiuriello proved the following.
\begin{thm}[{\cite[Proposition 2.9]{MR4122477}}]
A typical compact set $K\in\mc K([0,1]^n)$ is microscopic.
\end{thm}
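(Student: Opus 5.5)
We show that the family of microscopic compact sets, i.e.\ $\mc K([0,1]^n)\cap\DD(\clos\geom)$, is a dense $G_\del$ subset of $(\mc K([0,1]^n),\hausm)$. Since $\clos\geom=\{\geom^{\mult k}:k\in\nset\}$ is countable and every $k$-shift of $\geom$ is the sequence $\seq{2^{-kj}:j\in\nset}$, a compact set $K$ is microscopic if and only if
\[
  K\in\bigcap_{k\in\nset}\mc U_k,\qquad
  \mc U_k=\{K\in\mc K([0,1]^n): K \text{ has a finite } \geom^{\mult k}\text{-fine cover by open sets}\}.
\]
Here compactness of $K$ lets us replace a countable open cover by a finite subcover. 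We may also keep the indices as they are, using the sets $E_j$ for $j\le m$ only.

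\textbf{Step 1: each $\mc U_k$ is open.} Suppose $K\subs U_1\cup\dots\cup U_m$ with the $U_j$ open and $\diam U_j<2^{-kj}$. By compactness, $U=\bigcup_{j\le m}U_j$ contains an $\eps$-neighbourhood of $K$ for some $\eps>0$. Every $K'$ with $\hausm(K,K')<\eps$ then satisfies $K'\subs U$, so the same cover works for $K'$. Hence $\mc U_k$ is open.

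\textbf{Step 2: each $\mc U_k$ is dense.} Finite sets are dense in $\mc K([0,1]^n)$: any compact set is approximated by a finite $\eps$-net of itself. A finite set $F=\{x_1,\dots,x_m\}$ lies in $\mc U_k$, since we can take $U_j$ to be an open ball around $x_j$ of diameter less than $2^{-kj}$. Therefore $\mc U_k$ is dense and open.

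\textbf{Conclusion.} The intersection $\bigcap_k\mc U_k$ is a dense $G_\del$ set, hence comeager by the Baire category theorem in the compact metric space $\mc K([0,1]^n)$. Its elements are exactly the compact microscopic sets, so a typical compact set is microscopic.

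No step is a real obstacle. The only point requiring care is Step 1: we need the passage from a countable cover to a finite one (this is where compactness is used) so that the cover is uniformly robust under small Hausdorff perturbations. Nothing specific to $\geom$ or to $[0,1]^n$ is used beyond compactness of the ambient space and countability of the determining family. The same argument therefore applies to any countably determined $S$ and any compact metric space $X$.
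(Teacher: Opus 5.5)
Your argument is correct, and it takes a more elementary route than the paper. The paper reproves this statement, and extends it to countably determined $S$ in the Gromov--Hausdorff space $\mc K_\mathsf{GH}$, through Hausdorff measures. For each $s$ in a countable determining family it picks a gauge $\phi\asymp s$, uses Corollary~\ref{sharp7} to get $\NN(\hm^\phi)\subs\DD(\closs)$, and then quotes the known result that a typical compact set is $\hm^\phi$-null.

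You bypass Hausdorff measures entirely. Your sets $\mc U_k$ are the exact analogue of the open dense sets used to prove that $\hm^\phi$-null compact sets are typical, applied directly to $\geom^{\mult k}$-fine covers. Compactness of $K$ turns a countable open cover into a finite one that survives small Hausdorff perturbations.

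What your route buys is self-containedness. It needs neither Theorem~\ref{domhaus1}/Corollary~\ref{sharp7} nor an external typicality theorem, and, as you note, it works for any countable $S'\para S$ in any compact metric space. It even transfers to the paper's setting $\mc K_\mathsf{GH}$. If $d_\mathsf{GH}(K,K')<\eps$, a correspondence of distortion less than $2\eps$ carries a finite cover $E_1,\dots,E_m$ of $K$ to a cover of $K'$ whose diameters grow by less than $2\eps$. Only finitely many strict inequalities $\diam E_j<s_j$ need slack, and finite spaces are dense.

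The paper's route, in exchange, places a typical compact set in the smaller class $\NN(\hm^\phi)=\DDD(\closs)$. By Theorem~\ref{sharp8} this class is in general strictly smaller than $\DD(\closs)$. The paper's route also showcases the link between dominated sets and Hausdorff measures that the paper is about.

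A minor point: comeagerness of $\bigcap_k\mc U_k$ follows directly from the definition, since each complement is closed and nowhere dense. The Baire category theorem is needed only to see that this comeager set is nonempty, indeed dense.
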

Using our results and the well-known theorems on typicality of Hausdorff measure zero
their theorem can be very easily reproved and extended. We will prove
it in the setting of the so called \emph{Gromov-Hausdorff space} $\mc K_\mathsf{GH}$,
the space of all nonempty compact spaces provided with the \emph{Gromov-Hausdorff metric}
$d_\mathsf{GH}$. We refer to \cite{MR3831258} for the definitions and details.
\begin{thm}
If $S\subs\SEQ$ is countably determined,
then a typical compact set $K\in\mc K_\mathsf{GH}$ is $S$-dominated.
\end{thm}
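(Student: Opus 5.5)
We reduce the statement to the known typicality of Hausdorff null sets in the Gromov--Hausdorff space, using the results of Section~\ref{sec:versus}. Since $S$ is countably determined, we may replace $S$ by a countable $S'\para S$ without changing $\DD(S)$. So let $S=\{s^{(k)}:k\in\nset\}$. Then $\DD(S)=\bigcap_k\DD(\{s^{(k)}\})$. A countable intersection of comeager sets is comeager, so it suffices to prove that for a single $s\in\SEQ$ a typical $K\in\mc K_\mathsf{GH}$ is $\{s\}$-dominated. In fact we will show that it is $\closs$-dominated, which is stronger since $\closs\leq\{s\}$.

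Fix $s$. By Lemma~\ref{ash2}(i) applied to a suitable gauge, or directly by choosing a continuous increasing gauge $\phi$ that decays fast at $s_n$, we choose a gauge $\phi$ with $\phi\circ s\in\el1$; for instance take $\phi$ piecewise linear with $\phi(s_n)=2^{-n}$. Theorem~\ref{versus1}, (i)$\Rightarrow$(ii), yields $\DD_X(\closs)\subs\NN(\hm^\phi_X)$. This inclusion points the wrong way for our purpose, so we instead use Theorem~\ref{versus4}: it gives $\NN(\hm^\phi)=\bigcup_{\phi\circ t\in\el1}\DD(\clos t)$, which also points the wrong way. We therefore need the converse direction. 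We take $\phi\asymp s$ and apply Corollary~\ref{sharp7}, which gives $\NN(\hm^\phi)\subs\DD(\closs)$ in every metric space (Theorem~\ref{domhaus1}). Any continuous strictly increasing $\phi$ with $\phi(s_n)=1/n$ will do.

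It remains to show that for every gauge $\phi$, a typical compact metric space $K$ satisfies $\hm^\phi(K)=0$. This is the well-known typicality fact, and we would prove it directly. For $m\in\nset$ let
\[
U_m=\{K\in\mc K_\mathsf{GH}:\ \text{$K$ admits a finite cover by open sets $V_1,\dots,V_p$ with $\diam V_i<1/m$ and $\textstyle\sum_i\phi(\diam V_i)<1/m$}\}.
\]
Openness of $U_m$ follows because $d_\mathsf{GH}$-close spaces embed isometrically into a common space at small Hausdorff distance. A slight enlargement of the cover of $K$ then covers $K'$, and the strict inequalities survive thanks to right-continuity of $\phi$ and the slack in them. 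Density of $U_m$ follows because finite metric spaces are dense in $\mc K_\mathsf{GH}$ and every finite space lies in $U_m$: cover it by its points, which are singletons of diameter $0$ and have $\phi(0)=0$. Hence $\bigcap_m U_m$ is a dense $G_\delta$. Every $K$ in it has $\hm^\phi_{1/m}(K)<1/m$ for all $m$, so $\hm^\phi(K)=0$, and therefore $K\in\DD(\closs)$.

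The main obstacle is the openness of $U_m$. One must realize the Gromov--Hausdorff closeness as a Hausdorff-metric closeness inside a common ambient space, and then check that the inflated sets still have controlled diameters and $\phi$-sums. Right-continuity of $\phi$ gives room for this provided the inequalities are strict, which is how $U_m$ is defined. The remaining steps are assembly of the results stated earlier.
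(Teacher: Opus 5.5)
Your proof is correct and follows the paper's route: you reduce to a countable $S$, take $\phi\asymp s$ so that Corollary~\ref{sharp7} gives $\NN(\hm^\phi)\subs\DD(\closs)$, and use that a typical compact space is $\hm^\phi$-null. The one difference is that you prove this last typicality fact directly with the open dense sets $U_m$ (right-continuity of $\phi$ for openness, finite spaces for density), which works, whereas the paper cites \cite[Theorem 2.2]{MR3831258}; you should also delete the false start with $\phi(s_n)=2^{-n}$, since only the choice $\phi\asymp s$ is used.
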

\begin{proof}
Fix $s\in S$ and choose a gauge $\phi\approx s$.
By Corollary~\ref{sharp7}, $\NN(\hm^\phi)\subs\DD(\closs)$. By~\cite[Theorem 2.2]{MR3831258},
a typical compact set $K\in\mc K_\mathsf{GH}$ has Hausdorff measure $\hm^\phi$ zero, hence
it is $\closs$-dominated. Since $S$ is countably determined, being $S$-dominated is
a countable intersection of typical properties and thus is typical.
\end{proof}

\subsection*{Typical continuous functions}
A property of a continuous function on $[0,1]$ is \emph{typical} if the family of all
continuous functions possessing the property is comeager in $C([0,1])$.
Karasi\'nska, A. and Wagner-Bojakowska prove in~\cite{MR2429522} that
a typical function is one-to-one except a microscopic set.
We point out that their proof actually shows that if $\mc J$ is any $G_\del$-based \si ideal
including all singletons, then a typical function is one-to-one except
a set $J\in\mc J$. In particular, it holds for $\mc J=\DD(S)$ for any
countably determined set $S\subs\SEQ$.

\subsection*{Microscopic symmetric Cantor sets}
Balcerzak, Filipczak and Nowakowski~\cite{MR4036593} investigate the family $\CS$ of all
symmetric Cantor sets $\cube(r)$ (cf.~Section~\ref{sec:haus}).
They consider, for each $f\colon \Nset\to[1,\infty)$, the family $\MM(f)\subs\CS$ defined thus:
\[
  \MM(f)=\{\cube(r):\forall\eps>0\ \emany n\ r_n<\eps^{f(n)}\}
\]
and compare microscopic symmetric Cantor sets with these families for $f$
taking a particular form $f(n)=p^n$, where $p\in(1,\infty)$.
They prove (recall that microscopic sets constitute the ideal $\DD(\clos\geom)$):
\begin{thm}[{\cite[3.2,3.6]{MR4036593}}]\label{piotr1}
$\MM(2^n)\subsetneqq\DD(\clos\geom)\cap\CS\subs\bigcap_{1<p<2}\MM(p^n)$
\end{thm}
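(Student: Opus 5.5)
The plan is to translate both inclusions into statements about Hausdorff measures of symmetric Cantor sets, using Corollary~\ref{sharp7} with the gauge $\micr(r)=-\frac1{\log r}$, for which $\micr\asymp\geom$ by Example~\ref{ex4.2}(i). That corollary gives
\[
  \NN(\hm^\micr)\subs\DD(\clos\geom)\subs\bigcap_{\alpha>1}\NN(\hm^{\micr^\alpha}).
\]
The key auxiliary fact is a Cantor-set analogue of Lemma~\ref{cubes1}: for a gauge $\phi$ and $\cube(r)\in\CS$,
\[
  \hm^\phi(\cube(r))=0\iff\liminf_{n\to\infty}2^n\phi(r_n)=0 .
\]
The direction ``$\Leftarrow$'' is immediate, since level $n$ covers $\cube(r)$ by $2^n$ intervals $I_p$ of length $r_n$. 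For ``$\Rightarrow$'' I would argue by the mass distribution principle, as in Lemma~\ref{cubes2}(ii). Put mass $2^{-n}$ on each $I_p$, $p\in2^n$. Because of the gaps forced by $r_{n+1}\leq\frac12r_n$, an interval $J$ with $r_{n+1}\leq\diam J<r_n$ meets at most a bounded number of level-$(n+1)$ intervals. Hence $\mu(J)\lesssim2^{-n}\lesssim\phi(r_{n+1})$ whenever $\liminf 2^n\phi(r_n)>0$. Some care is needed because $\phi$ need not be doubling, but for $\micr^\alpha$ this is harmless.

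For the first inclusion, take $\cube(r)\in\MM(2^n)$ and fix $k$. I want the level-$n$ intervals for a suitable $n$ to form a $\geom^{\mult k}$-fine cover. Enumerate them as $E_1,\dots,E_{2^n}$. Choose $\eps<2^{-k}$ and $n$ with $r_n<\eps^{2^n}$; then for every $j\leq2^n$
\[
  \diam E_j=r_n<2^{-k2^n}\leq2^{-kj}.
\]
So $\cube(r)$ is microscopic, and no measure theory is needed for this part. (Equivalently, $\cube(r)\in\MM(2^n)$ iff $\limsup\log(1/r_n)/2^n=\infty$ iff $\liminf 2^n\micr(r_n)=0$ iff $\hm^\micr(\cube(r))=0$.)

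For the second inclusion, take $\cube(r)$ microscopic and $1<p<2$, and write $p=2^{1/\alpha}$ with $\alpha>1$. By Corollary~\ref{sharp7}, $\hm^{\micr^\alpha}(\cube(r))=0$. By the auxiliary fact this means $\liminf 2^n(\log\frac1{r_n})^{-\alpha}=0$, that is, $\limsup\log(1/r_n)/p^n=\infty$. This says exactly that for every $\eps>0$ there are infinitely many $n$ with $r_n<\eps^{p^n}$. So $\cube(r)\in\MM(p^n)$.

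The main obstacle is the strictness $\MM(2^n)\neq\DD(\clos\geom)\cap\CS$. Here I need $r$ with $\log(1/r_n)\leq C2^n$ for all $n$, so that $\cube(r)\notin\MM(2^n)$, yet $\cube(r)$ microscopic. Such a set has $\hm^\micr(\cube(r))>0$, so single-level covers cannot work. The cover has to mix levels: spend the early, large slots $j$ on a few coarse intervals, and cover the remaining part of the set by descendants at deeper levels placed in later slots. I would first try $r_n$ with $\log(1/r_n)=c_n2^n$, where $c_n$ oscillates between a bounded value and rare large values on a sparse set of $n$. I would then verify the counting inequality $\sum(\text{slots used at level }m)\cdot 2^{-kj}$-budget directly, i.e. that in the tree of intervals, for each $k$, the nodes can be assigned injectively to slots $j$ with $r_{\text{level}}<2^{-kj}$. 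If this becomes too delicate, I would fall back on the explicit example of~\cite[3.6]{MR4036593}.
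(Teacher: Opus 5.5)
Your proofs of the two non-strict inclusions are correct. The single-level cover for $\MM(2^n)\subs\DD(\clos\geom)$ is more elementary than the paper's route, which rewrites $\MM(2^n)\cap\CS$ as $\NN(\hm^\micr)\cap\CS$ via Lemma~\ref{CS1} and then uses Theorem~\ref{domhaus1}. The inclusion into $\bigcap_{1<p<2}\MM(p^n)$ via Corollary~\ref{sharp7} and the criterion $\hm^\phi(\cube(r))=0\iff\liminf_n2^n\phi(r_n)=0$ is essentially the paper's argument. Two small corrections to your mass distribution step. First, no doubling is needed. Second, the bound does not come from gaps, which may vanish (e.g.\ $\cube(\frac12)=[0,1]$). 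It comes from the fact that a set of diameter $<r_n$ meets at most two level-$n$ intervals, so $\mu(J)\leq2^{1-n}$, while $\phi(\diam J)\geq\phi(r_{n+1})$.

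The genuine gap is the strictness $\MM(2^n)\neq\DD(\clos\geom)\cap\CS$, which you do not prove. Your sketched construction does not work as stated. If the ``rare large values'' of $c_n$ are unbounded, then $\limsup_n\log(1/r_n)/2^n=\infty$ and $\cube(r)\in\MM(2^n)$. If they are bounded, the oscillation buys nothing. Falling back on~\cite{MR4036593} just cites the result.

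The missing idea is that the simplest choice $c_n\equiv1$, i.e.\ $r_n=2^{-2^n}$, already works, and the mixed-level cover comes from the divergence of the harmonic series. Indeed $r_n<\eps^{2^n}$ iff $\eps>\frac12$, so $\cube(r)\notin\MM(2^n)$. A level-$m$ interval may occupy slot $j$ of a $\geom^{\mult k}$-fine cover iff $2^{-2^m}<2^{-kj}$, i.e.\ iff $2^{-m}<\frac1{kj}$. So it suffices to cover $\Cset$ by cylinders $[\sigma_j]$ with $2^{-\abs{\sigma_j}}<\frac1{kj}$. Put $m_j=\lfloor\log(kj)\rfloor+1$. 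This sequence is nondecreasing and $\sum_j2^{-m_j}\geq\sum_j\frac1{2kj}=\infty$. Now place consecutive dyadic intervals of lengths $2^{-m_j}$ starting at $0$; each left endpoint $\sum_{i<j}2^{-m_i}$ is a multiple of $2^{-m_j}$. This exhausts $[0,1)$ after finitely many steps and gives a finite partition of $\Cset$ into admissible cylinders, so the intervals $I_{\sigma_j}$ cover $\cube(r)$.

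This is what the paper does in Theorem~\ref{versus3} (ii)$\Rightarrow$(i), packaged as Theorem~\ref{sharp8}(ii). Since $\micr$ is $2$-doubling and $\micr\circ\geom=\seq{\frac1n}\notin\el1$, the symmetric Cantor set with $\micr(r_n)=2^{-n}$ (that is, $r_n=2^{-2^n}$) is microscopic, being a $1$-Lipschitz image of the microscopic cube $\Cube(2,r)$. At the same time $\hm^\micr(\cube(r))>0$, so it lies outside $\MM(2^n)$.
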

\noindent
and they ask (\cite[3.7]{MR4036593}) if the latter inclusion is proper.
In order to answer their question we first characterize $\MM(p^n)$.
Let $\micr(r)=-\frac{1}{\log r}$ (cf.~Example~\ref{ex4.2}(i)).
\begin{lem}\label{CS1}
If $p\geq2$, then
$C(r)\in\MM(p^n)$ if and only if $\hm^{\micr^{1\lom\log p}}(C(r))=0$.
\end{lem}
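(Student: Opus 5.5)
The plan is to reduce both sides to a condition on the sequence $r$ alone. First I show that for a symmetric Cantor set and an arbitrary gauge $\phi$,
\[
  \hm^\phi(\cube(r))=0\iff\liminf_{n\to\infty}2^n\phi(r_n)=0 .
\]
Then I compute this liminf for $\phi=\micr^{1\lom\log p}$ and check that it is exactly the defining condition of $\MM(p^n)$.

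\emph{Step 1, the forward direction of the criterion.} At level $n$ the set $\cube(r)$ is covered by the $2^n$ intervals $\{I_q:q\in2^n\}$, each of length $r_n$. Hence $\hm^\phi_\delta(\cube(r))\leq 2^n\phi(r_n)$ whenever $r_n<\delta$. If the liminf is $0$, this gives $\hm^\phi(\cube(r))=0$.

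\emph{Step 2, the converse direction of the criterion.} Suppose $\liminf 2^n\phi(r_n)=c>0$. Let $\psi$ be a continuous increasing gauge with $\psi(r_n)=2^{-n}$, obtained by linear interpolation between the points $r_n$. By Lemma~\ref{cubes2}(ii), $\hm^\psi(\cube(r))\geq\frac14$. For $r\in[r_{n+1},r_n]$ with $n$ large we have
\[
  \phi(r)\geq\phi(r_{n+1})\geq\tfrac c2 2^{-n-1}\geq\tfrac c4\psi(r),
\]
using that $\phi$ is nondecreasing and $\psi(r)\leq2^{-n}$ there. Therefore $\phi\geq\frac c4\psi$ near $0$. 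It follows that $\hm^\phi(\cube(r))\geq\frac c4\hm^\psi(\cube(r))\geq\frac c{16}>0$.

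I expect this comparison to be the only step needing care. The point is to use monotonicity of $\phi$ on each block $[r_{n+1},r_n]$, rather than any doubling property of $\phi$.

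\emph{Step 3, the computation.} Take $\phi(r)=\bigl(\log\frac1r\bigr)^{-1/\log p}$. Then $2^n\phi(r_n)\to0$ along a subsequence exactly when, for every $M>0$, there are infinitely many $n$ with $\log\frac1{r_n}>(M2^n)^{\log p}$. Since $2^{n\log p}=p^n$, the right-hand side equals $M^{\log p}p^n$, and $M^{\log p}$ ranges over all of $(0,\infty)$ as $M$ does.

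So the condition reads: for every $K>0$ there are infinitely many $n$ with $r_n<2^{-Kp^n}$. Writing $\eps=2^{-K}$, which ranges over $(0,1)$, this is $\forall\eps\in(0,1)\ \emany n\ r_n<\eps^{p^n}$. For $\eps\geq1$ the condition holds trivially, so this is precisely the statement $\cube(r)\in\MM(p^n)$. Combined with Steps 1 and 2, this proves the lemma.

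Finally, I will check where the hypothesis $p\geq2$ is actually used. It may only matter for how the authors intend to apply the lemma, or for keeping $\micr^{1\lom\log p}$ in the range of exponents relevant to the comparison with $\DD(\clos\geom)$. The argument above seems to work for any $p>1$.
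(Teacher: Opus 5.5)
Your proof is correct. Its skeleton is the paper's: first establish $\hm^\phi(\cube(r))=0\iff\liminf_n 2^n\phi(r_n)=0$, then compute for $\phi=\micr^{1\lom\log p}$. You justify both ingredients differently.

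\begin{itemize}
\item \emph{The criterion.} The paper obtains it from classical density theorems (\cite[Theorem 4.15]{MR2288081}), which is why it states it only for doubling gauges. You derive the lower bound from Lemma~\ref{cubes2}(ii) alone, via the blockwise monotone comparison $\phi\geq\frac c4\psi$ on $(0,r_N]$. This gives the criterion for every gauge, with no doubling hypothesis, and stays within tools the paper already states. (If $c=\infty$, replace it by any finite positive lower bound.)
\item \emph{The last equivalence.} The paper cites \cite[Proposition 2.1]{MR4036593} to pass from $\liminf_n p^n/\log\frac1{r_n}=0$ to $\cube(r)\in\MM(p^n)$. You unwind the definition of $\MM(p^n)$ directly. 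That is all that is needed, since the liminf condition literally says $\forall K>0\ \emany n\ r_n<2^{-Kp^n}$.
\end{itemize}

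The paper's route is shorter on the page but leans on two external results. Yours is self-contained and more general.

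Your closing remark is also well taken: neither argument uses $p\geq2$. Indeed $\micr^\alpha$ is $L$-doubling for every $L>1$ and every $\alpha>0$, so even the paper's doubling requirement holds for all $p>1$. Moreover, the proposition that follows applies the lemma with $1<p<2$, i.e.\ with exponent $1\lom\log p>1$, to rewrite $\bigcap_{1<p<2}\MM(p^n)$. So the natural hypothesis is $p>1$, which your proof covers.
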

\begin{proof}
It is an immediate consequence of classical density theorems
(e.g., \cite[Theorem 4.15]{MR2288081}) that if $\phi$ is a doubling gauge, then
$\hm^\phi(C(r))=0$ if and only if $\liminf_{n\to\infty}2^n\phi(r_n)=0$.
Since $\micr^{1\lom\log p}$ is doubling for each $p\geq2$, we have
\begin{align*}
  \hm^{\micr^{1\lom\log p}}(C(r))=0&\Longleftrightarrow
  \liminf_{n\to\infty}2^n\micr^{1\lom\log p}(r_n)=0
  \Longleftrightarrow
  \liminf_{n\to\infty}2^n\bigl(-\tfrac1{\log r_n}\bigr)^{1\lom\log p}=0 \\
  &\Longleftrightarrow
  \liminf_{n\to\infty}\bigl(-\tfrac{p^n}{\log r_n}\bigr)^{1\lom\log p}=0
  \Longleftrightarrow
  \liminf_{n\to\infty}\bigl(-\tfrac{p^n}{\log r_n}\bigr)=0
\end{align*}
and the latter is by~\cite[Proposition 2.1]{MR4036593} equivalent with $C(r)\in\MM(p^n)$.
\end{proof}
\begin{prop}
$\MM(2^n)\subsetneqq\DD(\clos\geom)\cap\CS\subsetneqq\bigcap_{1<p<2}\MM(p^n)$.
\end{prop}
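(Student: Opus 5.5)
The first inclusion $\MM(2^n)\subsetneqq\DD(\clos\geom)\cap\CS$ is already contained in Theorem~\ref{piotr1}, and so is the non-strict second inclusion. It remains to find a symmetric Cantor set that lies in $\bigcap_{1<p<2}\MM(p^n)$ but is not microscopic. The plan is to convert both families into Hausdorff null ideals restricted to $\CS$, and then reuse the construction from the proof of Theorem~\ref{sharp8}(ii) with $\phi=\micr$ and $s=\geom$.

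\emph{Step 1: extend Lemma~\ref{CS1} to $1<p<2$.} The proof of Lemma~\ref{CS1} uses only that the gauge is doubling. For every $a>0$ we have
\[
  \frac{\micr^a(2r)}{\micr^a(r)}=\Bigl(\frac{\log r}{\log r+1}\Bigr)^a\to1,
\]
so $\micr^{a}$ is even $L$-doubling for every $L>1$. Hence, for every $p>1$, a set $\cube(r)$ belongs to $\MM(p^n)$ if and only if $\hm^{\micr^{1\lom\log p}}(\cube(r))=0$. As $p$ ranges over $(1,2)$, the exponent $\alpha=1\lom\log p$ ranges over $(1,\infty)$. Therefore
\[
  \bigcap_{1<p<2}\MM(p^n)=\CS\cap\bigcap_{\alpha>1}\NN(\hm^{\micr^\alpha}).
\]

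\emph{Step 2: the witness.} By Example~\ref{ex4.2}(i), $\micr\asymp\geom$, and $\micr$ is $L$-doubling for some $L<2$. We apply Lemma~\ref{2gauges2} with $\phi=\micr$; its hypothesis $\phi\circ s\in\el1$ is not actually used in producing $\zeta=\tau_3\circ\micr$ and $\xi=\tau_2\circ\micr$, which is how the proof of Theorem~\ref{sharp8} uses it. This gives gauges $\zeta\prec\xi\prec\micr^\alpha$ for all $\alpha>1$, with $\zeta\circ\geom\in\el1$ and $\zeta,\xi$ both $2$-doubling. By Lemma~\ref{doubling11} we may take $\zeta$ strictly $2$-doubling, continuous and strictly increasing. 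Put $r_n=\zeta^{-1}(2^{-n})$. Then $r_{n+1}<\frac12r_n$, so $\cube(r)\in\CS$ is defined, and Lemma~\ref{cubes2}(ii) gives $0<\hm^\zeta(\cube(r))<\infty$.

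\emph{Step 3: verify both properties.}
\begin{itemyze}
\item Since $\zeta\prec\xi$ and $\hm^\zeta(\cube(r))<\infty$, we get $\hm^\xi(\cube(r))=0$. Since $\xi\prec\micr^\alpha$ for every $\alpha>1$, this gives $\hm^{\micr^\alpha}(\cube(r))=0$ for every $\alpha>1$. By Step~1, $\cube(r)\in\bigcap_{1<p<2}\MM(p^n)$.
\item Since $\zeta\circ\geom\in\el1$, Theorem~\ref{versus1} gives $\DD(\clos\geom)\subs\NN(\hm^\zeta)$. As $\hm^\zeta(\cube(r))>0$, the set $\cube(r)$ is not microscopic.
\end{itemyze}
This proves the strict second inclusion.

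The main point needing care is the roles of $\zeta$ and $\xi$. The Cantor set must be built from the smaller gauge $\zeta$, the one satisfying $\zeta\circ\geom\in\el1$, so that it has positive $\hm^\zeta$-measure; it is then null for the larger gauge $\xi$. The other point to check is that the density-theorem characterization in Lemma~\ref{CS1} really applies for exponents $1\lom\log p>1$, which holds because $\micr^a$ is doubling for every $a>0$.
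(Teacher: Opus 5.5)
Your route is the paper's: translate both families into Hausdorff null ideals via Lemma~\ref{CS1}, and reuse the symmetric Cantor set construction from the proof of Theorem~\ref{sharp8}(ii). Your extension of Lemma~\ref{CS1} to all $p>1$ is correct, since $\micr^a$ is doubling for every $a>0$; the paper uses this extension tacitly. Taking the first strict inclusion from Theorem~\ref{piotr1}, instead of re-deriving it from Theorem~\ref{sharp8}(ii), is also fine. However, two steps in your Steps~2--3 are wrong as written. Both are inherited from misprints in Lemma~\ref{2gauges2}, and both are easy to repair.

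\emph{Summability of $\zeta\circ\geom$.} You say the hypothesis $\phi\circ s\in\el1$ of Lemma~\ref{2gauges2} is not used. It is indeed not needed to define $\zeta,\xi$. But it is exactly what the proof of conclusion (ii), $\zeta\circ s\in\el1$, relies on, and here $\micr\circ\geom=\seq{1/n}\notin\el1$. So you must check $\zeta\circ\geom\in\el1$ by hand: $\micr(2^{-n})=1/n$, hence $\zeta(2^{-n})=\tau_3(1/n)=\frac{1}{n(\log n)^3}$ for $n\geq2$, which is summable.

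\emph{The relation between $\zeta$ and $\xi$.} You take explicitly $\zeta=\tau_3\circ\micr$ and $\xi=\tau_2\circ\micr$. Then $\xi/\zeta=\log\frac{1}{\micr}\to\infty$. In the paper's convention ($\phi\prec\psi$ means $\psi/\phi\to0$) this gives $\xi\prec\zeta$, not $\zeta\prec\xi$. Consequently the first bullet of your Step~3 fails: $\hm^\zeta(\cube(r))>0$ actually forces $\hm^\xi(\cube(r))=\infty$, not $0$.

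You do not need $\xi$ at all. Since $\micr^\alpha/\zeta=\micr^{\alpha-1}\bigl(\log\frac{1}{\micr}\bigr)^3\to0$, you have $\zeta\prec\micr^\alpha$ for every $\alpha>1$. So $\hm^\zeta(\cube(r))<\infty$ gives $\hm^{\micr^\alpha}(\cube(r))=0$ directly. Together with $\hm^\zeta(\cube(r))>0$ and $\zeta\circ\geom\in\el1$, this yields the witness and completes the argument.

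Your remark that the Cantor set ``must'' be built from the smaller gauge is also too strong. Building it from $\tau_2\circ\micr$, as the paper effectively does, works equally well, since $\tau_2(1/n)=\frac{1}{n(\log n)^2}$ is also summable.
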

\begin{proof}
By Lemma~\ref{CS1}, the inclusions read
\begin{equation}\label{piotreK}
\NN(\hm^{\micr})\cap\CS\subsetneqq\DD(\clos\geom)\cap\CS
  \subsetneqq\bigcap_{\alpha>1}\NN(\hm^{\micr^\alpha})\cap\CS.
\end{equation}
Since $\micr\asymp\geom$ and it is $L$-doubling for each $L>1$,
both follow from Theorem~\ref{sharp8}(ii). We only have to remind that
the sets witnessing both
$\NN(\hm^{\micr})\neq\DD(\clos\geom)$ and
$\DD(\clos\geom)\neq\bigcap_{\alpha>1}\NN(\hm^{\micr^\alpha})$
are symmetric Cantor sets so that they witness also \eqref{piotreK}.
\end{proof}

%
%

\subsection*{Dominated sets and Hausdorff measures}
We know from Theorem~\ref{versus1} that the inclusion $\DD(\closs)\subs\NN(\hm^\phi)$ is
characterized by $\phi\circ s\in\el1$.
As to the other inclusion $\NN(\hm^\phi)\subs\DD(\closs)$,
we only know from Corollary~\ref{sharp7} that it is guaranteed by $\phi\asymp s$.
\begin{question}
Is there a characterization of $\NN(\hm^\phi)\subs\DD(\closs)$
in terms of $\phi$ and $s$?
\end{question}

We just mentioned that Corollary~\ref{sharp7} claims that
if $\phi\asymp s$ then every $\hm^\phi$-null set is $\closs$-dominated.
What about sets of \si finite measure?
\begin{question}
Suppose $\phi\asymp s$. Is then $\NNs(\hm^\phi)\subs\DD(\closs)$?
\end{question}

\subsection*{Additively complete sets}
Generally we do not know much about the following problems regarding the relationship
of the ideals $\DD(\closs)$ and $\DD(\whs)$.
Section~\ref{sec:shift} contains a few partial answers to the questions listed therein.
Lot of questions remain unanswered.
%
%
\begin{questions}
(i) Let $s\in\SEQ$. Is there a separable (or compact) metric space
$X$ that is $\whs$-dominated but not $\closs$-dominated?

(ii) Let $s\in\SEQ$ be $2$-doubling. Is there a (compact) set $E\subs\Rset$ that is
$\whs$-dominated but not $\closs$-dominated?
\end{questions}

\begin{questions}
Some specific questions:
\begin{enum}
\item \emph{Microscopic sets:} Is the family $\DD(\wh\geom)$ an ideal or even a \si ideal?
What about $\DD(\wh{\geom_p})$ for $p<2$?
\item \emph{Generalized harmonic sequences:} Is $\DD(\wh{\harm^p})=\DD(\clos{\harm^p})$?
\end{enum}
\end{questions}

A key fact in the proof of Theorem~\ref{thmwarpj} is that in a doubling space
$\DDs(\del\whs)=\DDs(\whs)$.
\begin{question}
Suppose $X$ is doubling, $\del>0$. Is $\DD(\del\whs)=\DD(\whs)$?
\end{question}

\subsection*{Gauges}
This question is related to Lemma~\ref{ash2}.
\begin{question}
Suppose that $\phi$ is doubling and $\phi\circ s\in\el1$. Is there a doubling
$\psi\prec\phi$ such that $\psi\circ s\in\el1$?
\end{question}

\subsection*{Inscribing Theorem?}
Many \si ideals have the following property: Every $G_\del$-set in a compact space
that is not small contains a compact set that is not small.
Do the ideals of dominated sets have this property? Or, to begin with, just the microscopic sets on the line? This question seems
to be rather difficult and important.
\begin{questions}
Let $E$ be a $G_\del$-set in a compact space and $s\in\SEQ$. Are the following true?
\begin{enum}
\item If $E$ is not $\closs$-dominated, then it contains a compact set that is
not $\closs$-dominated.
\item If $E$ is not microscopic, then it contains a compact set that is
not microscopic.
\end{enum}
\end{questions}

\medskip
We would like to thank to Arturo Mart\'\i nez-Celis, Jaros\l aw Swaczyna and Cristina Villanueva-Segovia for inspiring discussions.

\bibliographystyle{amsplain}
\bibliography{micro_list,micro_extra}

@Book{MR0281862,
  author     = {Rogers, C. A.},
  title      = {Hausdorff measures},
  pages      = {viii+179},
  publisher  = {Cambridge University Press, London-New York},
  address    = {London},
  mrclass    = {28.13},
  mrnumber   = {0281862 (43 \#7576)},
  mrreviewer = {Edwin Hewitt},
  year       = {1970},
}

@article{Ash,
  author = {J. M. Ash},
  title = {Neither a Worst Convergent Series nor a Best Divergent Series Exists},
  journal = {The College Mathematics Journal},
  volume = {28},
  number = {4},
  pages = {296--297},
  year = {1997},
  publisher = {Taylor \& Francis},
  doi = {10.1080/07468342.1997.11973879},
  URL = {https://doi.org/10.1080/07468342.1997.11973879},
  eprint = {https://doi.org/10.1080/07468342.1997.11973879}
}

@article {MR695109,
    AUTHOR = {Timan, A. F. and Vestfrid, I. A.},
     TITLE = {Any separable ultrametric space is isometrically embeddable in
              {$l\sb{2}$}},
   JOURNAL = {Funktsional. Anal. i Prilozhen.},
  FJOURNAL = {Akademiya Nauk SSSR. Funktsional\cprime ny\u i\ Analiz i ego
              Prilozheniya},
    VOLUME = {17},
      YEAR = {1983},
    NUMBER = {1},
     PAGES = {85--86},
      ISSN = {0374-1990},
   MRCLASS = {54E35 (46P05 54C25)},
  MRNUMBER = {695109},
}

@article {MR160860,
    AUTHOR = {Rogers, C. A. and Taylor, S. J.},
     TITLE = {Additive set functions in {E}uclidean space. {II}},
   JOURNAL = {Acta Math.},
  FJOURNAL = {Acta Mathematica},
    VOLUME = {109},
      YEAR = {1963},
     PAGES = {207--240},
      ISSN = {0001-5962,1871-2509},
   MRCLASS = {28.13},
  MRNUMBER = {160860},
MRREVIEWER = {Chr.\ Y.\ Pauc},
       DOI = {10.1007/BF02391813},
       URL = {https://doi.org/10.1007/BF02391813},
}

@Book{MR867284,
  author     = {Falconer, K. J.},
  title      = {The geometry of fractal sets},
  isbn       = {0-521-25694-1; 0-521-33705-4},
  pages      = {xiv+162},
  publisher  = {Cambridge University Press},
  series     = {Cambridge Tracts in Mathematics},
  volume     = {85},
  address    = {Cambridge},
  mrclass    = {28-02 (28A99 60J65)},
  mrnumber   = {867284 (88d:28001)},
  mrreviewer = {K. E. Hirst},
  year       = {1986},
}

@InCollection{MR4146582,
  author    = {Hru\v{s}\'{a}k, M. and Zindulka, O.},
  booktitle = {Centenary of the {B}orel conjecture},
  title     = {Strong measure zero in {P}olish groups},
  doi       = {10.1090/conm/755/15184},
  pages     = {37--68},
  publisher = {Amer. Math. Soc., [Providence], RI},
  series    = {Contemp. Math.},
  url       = {https://doi.org/10.1090/conm/755/15184},
  volume    = {755},
  mrclass   = {03E17 (22A10 22B05 54E52)},
  mrnumber  = {4146582},
  year      = {[2020] \copyright 2020},
}

@Article{MR1317515,
  author     = {Howroyd, J. D.},
  title      = {On dimension and on the existence of sets of finite positive {H}ausdorff measure},
  doi        = {10.1112/plms/s3-70.3.581},
  issn       = {0024-6115},
  number     = {3},
  pages      = {581--604},
  url        = {http://dx.doi.org/10.1112/plms/s3-70.3.581},
  volume     = {70},
  coden      = {PLMTAL},
  fjournal   = {Proceedings of the London Mathematical Society. Third Series},
  journal    = {Proc. London Math. Soc. (3)},
  mrclass    = {28A78},
  mrnumber   = {1317515 (96b:28004)},
  mrreviewer = {Hermann Haase},
  year       = {1995},
}

@Book{MR1333890,
  author     = {Mattila, P.},
  title      = {Geometry of sets and measures in {E}uclidean spaces},
  doi        = {10.1017/CBO9780511623813},
  isbn       = {0-521-46576-1; 0-521-65595-1},
  note       = {Fractals and rectifiability},
  pages      = {xii+343},
  publisher  = {Cambridge University Press},
  series     = {Cambridge Studies in Advanced Mathematics},
  url        = {http://dx.doi.org/10.1017/CBO9780511623813},
  volume     = {44},
  address    = {Cambridge},
  mrclass    = {28A75 (49Q20)},
  mrnumber   = {1333890 (96h:28006)},
  mrreviewer = {Harold Parks},
  year       = {1995},
}

@article {MR3831258,
    AUTHOR = {Jurina, S. and MacGregor, N. and Mitchell, A. and Olsen, L.
              and Stylianou, A.},
     TITLE = {On the {H}ausdorff and packing measures of typical compact
              metric spaces},
   JOURNAL = {Aequationes Math.},
  FJOURNAL = {Aequationes Mathematicae},
    VOLUME = {92},
      YEAR = {2018},
    NUMBER = {4},
     PAGES = {709--735},
      ISSN = {0001-9054,1420-8903},
   MRCLASS = {28A78 (28A80)},
  MRNUMBER = {3831258},
MRREVIEWER = {J\"org\ Neunh\"auserer},
       DOI = {10.1007/s00010-018-0548-5},
       URL = {https://doi.org/10.1007/s00010-018-0548-5},
}

@article {MR2288081,
    AUTHOR = {Edgar, G. A.},
     TITLE = {Centered densities and fractal measures},
   JOURNAL = {New York J. Math.},
  FJOURNAL = {New York Journal of Mathematics},
    VOLUME = {13},
      YEAR = {2007},
     PAGES = {33--87},
      ISSN = {1076-9803},
   MRCLASS = {28A78 (28A12 28A80)},
  MRNUMBER = {2288081},
MRREVIEWER = {Lars\ Olsen},
       URL = {http://nyjm.albany.edu:8000/j/2007/13_33.html},
}

@unpublished{invariants2,
    author = {M. Hru\v{s}\'{a}k and Zindulka, O. },
    title = {Fractal measures and special subsets of reals},
    note = {to appear}
}

@article {MR4185783,
    AUTHOR = {Karasi\'nska, A.},
     TITLE = {Remarks on some generalization of the notion of microscopic
              sets},
   JOURNAL = {Math. Slovaca},
  FJOURNAL = {Mathematica Slovaca},
    VOLUME = {70},
      YEAR = {2020},
    NUMBER = {6},
     PAGES = {1349--1356},
      ISSN = {0139-9918,1337-2211},
   MRCLASS = {28A05 (28A75)},
  MRNUMBER = {4185783},
MRREVIEWER = {Riccardo\ Cristoferi},
       DOI = {10.1515/ms-2017-0436},
       URL = {https://doi.org/10.1515/ms-2017-0436},
}

@article {MR4426649,
    AUTHOR = {Agadzhanov, A. N.},
     TITLE = {Peano-type curves, {L}iouville numbers, and microscopic sets},
      NOTE = {Translated from Dokl. Akad. Nauk {\bf 485} (2019), No. 1.},
   JOURNAL = {Dokl. Math.},
  FJOURNAL = {Doklady Mathematics},
    VOLUME = {99},
      YEAR = {2019},
    NUMBER = {2},
     PAGES = {117--120},
      ISSN = {1064-5624,1531-8362},
   MRCLASS = {54C99 (11K55 54F15)},
  MRNUMBER = {4426649},
       DOI = {10.1134/s1064562419020017},
       URL = {https://doi.org/10.1134/s1064562419020017},
}

@article {MR3869249,
    AUTHOR = {Paszkiewicz, A.},
     TITLE = {On microscopic sets and {F}ubini property in all directions},
   JOURNAL = {Math. Slovaca},
  FJOURNAL = {Mathematica Slovaca},
    VOLUME = {68},
      YEAR = {2018},
    NUMBER = {5},
     PAGES = {1041--1048},
      ISSN = {0139-9918,1337-2211},
   MRCLASS = {28A05 (03E15 26A30)},
  MRNUMBER = {3869249},
       DOI = {10.1515/ms-2017-0165},
       URL = {https://doi.org/10.1515/ms-2017-0165},
}

@article {MR3759529,
    AUTHOR = {Horbaczewska, G.},
     TITLE = {General approach to microscopic-type sets},
   JOURNAL = {J. Math. Anal. Appl.},
  FJOURNAL = {Journal of Mathematical Analysis and Applications},
    VOLUME = {461},
      YEAR = {2018},
    NUMBER = {1},
     PAGES = {51--58},
      ISSN = {0022-247X,1096-0813},
   MRCLASS = {28A05 (28A75 28A80)},
  MRNUMBER = {3759529},
       DOI = {10.1016/j.jmaa.2018.01.007},
       URL = {https://doi.org/10.1016/j.jmaa.2018.01.007},
}

@article {MR3823475,
    AUTHOR = {Chrz\c{a}szcz, K. and G\l\c{a}b, S.},
     TITLE = {Isomorphism theorems for {$\sigma$}-ideals of microscopic sets
              in various metric spaces},
   JOURNAL = {Bull. Soc. Sci. Lett. \L\'od\'z{} S\'er. Rech. D\'eform.},
  FJOURNAL = {Bulletin de la Soci\'et\'e{} des Sciences et des Lettres de
              \L\'od\'z. S\'erie: Recherches sur les D\'eformations},
    VOLUME = {67},
      YEAR = {2017},
    NUMBER = {3},
     PAGES = {127--139},
      ISSN = {0459-6854,2450-9329},
   MRCLASS = {54H05},
  MRNUMBER = {3823475},
}

@article {MR3685162,
    AUTHOR = {Karasi\'nska, A. and Paszkiewicz, A. and
              Wagner-Bojakowska, E.},
     TITLE = {A generalization of the notion of microscopic sets},
   JOURNAL = {Lith. Math. J.},
  FJOURNAL = {Lithuanian Mathematical Journal},
    VOLUME = {57},
      YEAR = {2017},
    NUMBER = {3},
     PAGES = {319--330},
      ISSN = {0363-1672,1573-8825},
   MRCLASS = {28A05 (03E15 28A75 54E52 54H05)},
  MRNUMBER = {3685162},
MRREVIEWER = {Szymon\ \.Zeberski},
       DOI = {10.1007/s10986-017-9363-2},
       URL = {https://doi.org/10.1007/s10986-017-9363-2},
}

@article {MR3568089,
    AUTHOR = {Czudek, K. and Kwela, A. and Mro\.zek, N. and Wo{\l}oszyn, W.},
     TITLE = {Ideal-like properties of generalized microscopic sets},
   JOURNAL = {Acta Math. Hungar.},
  FJOURNAL = {Acta Mathematica Hungarica},
    VOLUME = {150},
      YEAR = {2016},
    NUMBER = {2},
     PAGES = {269--285},
      ISSN = {0236-5294,1588-2632},
   MRCLASS = {28A05 (03E15 26A30)},
  MRNUMBER = {3568089},
       DOI = {10.1007/s10474-016-0659-1},
       URL = {https://doi.org/10.1007/s10474-016-0659-1},
}

@article {MR3529313,
    AUTHOR = {Paszkiewicz, A. and Wagner-Bojakowska, E.},
     TITLE = {Fubini property for microscopic sets},
   JOURNAL = {Tatra Mt. Math. Publ.},
  FJOURNAL = {Tatra Mountains Mathematical Publications},
    VOLUME = {65},
      YEAR = {2016},
     PAGES = {143--149},
      ISSN = {1210-3195,1338-9750},
   MRCLASS = {28A05 (28A75)},
  MRNUMBER = {3529313},
       DOI = {10.1515/tmmp-2016-0012},
       URL = {https://doi.org/10.1515/tmmp-2016-0012},
}

@article {MR3482702,
    AUTHOR = {Kwela, A.},
     TITLE = {Additivity of the ideal of microscopic sets},
   JOURNAL = {Topology Appl.},
  FJOURNAL = {Topology and its Applications},
    VOLUME = {204},
      YEAR = {2016},
     PAGES = {51--62},
      ISSN = {0166-8641,1879-3207},
   MRCLASS = {28A05},
  MRNUMBER = {3482702},
MRREVIEWER = {Benjamin\ Vejnar},
       DOI = {10.1016/j.topol.2016.01.031},
       URL = {https://doi.org/10.1016/j.topol.2016.01.031},
}

@article {MR3384702,
    AUTHOR = {Wagner-Bojakowska, E. and Wilczy\'nski, W.
              and Wojdowski, W.},
     TITLE = {Density topology involving microscopic sets and category},
   JOURNAL = {Tatra Mt. Math. Publ.},
  FJOURNAL = {Tatra Mountains Mathematical Publications},
    VOLUME = {62},
      YEAR = {2015},
     PAGES = {113--132},
      ISSN = {1210-3195,1338-9750},
   MRCLASS = {54A10 (28A05)},
  MRNUMBER = {3384702},
       DOI = {10.1515/tmmp-2015-0009},
       URL = {https://doi.org/10.1515/tmmp-2015-0009},
}

@article {MR3259051,
    AUTHOR = {Karasi\'nska, A. and Wagner-Bojakowska, E.},
     TITLE = {Microscopic and strongly microscopic sets on the plane.
              {F}ubini theorem and {F}ubini property},
   JOURNAL = {Demonstr. Math.},
  FJOURNAL = {Demonstratio Mathematica},
    VOLUME = {47},
      YEAR = {2014},
    NUMBER = {3},
     PAGES = {581--594},
      ISSN = {0420-1213,2391-4661},
   MRCLASS = {28A05 (28A75)},
  MRNUMBER = {3259051},
MRREVIEWER = {Tomasz\ Filipczak},
       DOI = {10.2478/dema-2014-0046},
       URL = {https://doi.org/10.2478/dema-2014-0046},
}

@article {MR3242549,
    AUTHOR = {G. Horbaczewska},
     TITLE = {Microscopic sets with respect to sequences of functions},
   JOURNAL = {Tatra Mt. Math. Publ.},
  FJOURNAL = {Tatra Mountains Mathematical Publications},
    VOLUME = {58},
      YEAR = {2014},
     PAGES = {137--144},
      ISSN = {1210-3195,1338-9750},
   MRCLASS = {28A05 (03E15)},
  MRNUMBER = {3242549},
MRREVIEWER = {Gareth\ Speight},
       DOI = {10.2478/tmmp-2014-0012},
       URL = {https://doi.org/10.2478/tmmp-2014-0012},
}

@incollection {MR3204595,
    AUTHOR = {Horbaczewska, G. and Karasi\'nska, A. and
              Wagner-Bojakowska, E.},
     TITLE = {Properties of the {$\sigma$}-ideal of microscopic sets},
 BOOKTITLE = {Traditional and present-day topics in real analysis},
     PAGES = {325--343},
 PUBLISHER = {Faculty of Mathematics and Computer Science. University of
              \L\'od\'z, \L\'od\'z},
      YEAR = {2013},
      ISBN = {978-83-7525-971-1},
   MRCLASS = {28A05 (54H05)},
  MRNUMBER = {3204595},
MRREVIEWER = {Marek\ Balcerzak},
}

@article {MR2904079,
    AUTHOR = {A. Karasi\'nska and Wagner-Bojakowska, E. },
     TITLE = {Homeomorphisms of linear and planar sets of the first category into microscopic sets},
   JOURNAL = {Topology Appl.},
  FJOURNAL = {Topology and its Applications},
    VOLUME = {159},
      YEAR = {2012},
    NUMBER = {7},
     PAGES = {1894--1898},
      ISSN = {0166-8641,1879-3207},
   MRCLASS = {54C10 (28A05 54E52)},
  MRNUMBER = {2904079},
MRREVIEWER = {Peter\ Elia\v s},
       DOI = {10.1016/j.topol.2011.11.055},
       URL = {https://doi.org/10.1016/j.topol.2011.11.055},
}

@incollection {MR2882548,
    AUTHOR = {Karasi\'nska, A. and Poreda, W. and
              Wagner-Bojakowska, E.},
     TITLE = {Duality principle for microscopic sets},
 BOOKTITLE = {Real functions, density topology and related topics},
     PAGES = {83--87},
 PUBLISHER = {\L\'od\'z{} Univ. Press, \L\'od\'z},
      YEAR = {2011},
      ISBN = {978-83-7525-536-2},
   MRCLASS = {28A05 (54E52)},
  MRNUMBER = {2882548},
MRREVIEWER = {K.\ P. S. Bhaskara Rao},
}

@article {MR2429522,
    AUTHOR = {Karasi\'nska, A. and E. Wagner-Bojakowska},
     TITLE = {Nowhere monotone functions and microscopic sets},
   JOURNAL = {Acta Math. Hungar.},
  FJOURNAL = {Acta Mathematica Hungarica},
    VOLUME = {120},
      YEAR = {2008},
    NUMBER = {3},
     PAGES = {235--248},
      ISSN = {0236-5294,1588-2632},
   MRCLASS = {26A30 (26A45)},
  MRNUMBER = {2429522},
MRREVIEWER = {Miroslav\ Zelen\'y},
       DOI = {10.1007/s10474-008-7093-y},
       URL = {https://doi.org/10.1007/s10474-008-7093-y},
}

@article {MR2152488,
    AUTHOR = {Appell, J.},
     TITLE = {A short story on microscopic sets},
   JOURNAL = {Atti Semin. Mat. Fis. Univ. Modena Reggio Emilia},
  FJOURNAL = {Atti del Seminario Matematico e Fisico dell' Universit\`a{} di
              Modena e Reggio Emilia},
    VOLUME = {52},
      YEAR = {2004},
    NUMBER = {2},
     PAGES = {229--233},
      ISSN = {1825-1269},
   MRCLASS = {28A05 (26A16 26A30 28A80)},
  MRNUMBER = {2152488},
}

@article {MR2290215,
    AUTHOR = {J. Appell and D'Aniello, E. and V\"ath, M.},
     TITLE = {Errata and addendum to: ``{S}ome remarks on small sets''
              [{R}icerche {M}at. {\bf 50} (2001), no. 2, 255--274;
              MR1909968]},
   JOURNAL = {Ricerche Mat.},
  FJOURNAL = {Ricerche di Matematica},
    VOLUME = {54},
      YEAR = {2005},
    NUMBER = {1},
     PAGES = {211--213},
      ISSN = {0035-5038},
   MRCLASS = {28A05 (26A15 26A16 26A30)},
  MRNUMBER = {2290215},
}

@article {MR1912017,
    AUTHOR = {J. Appell},
     TITLE = {``{S}mall'' sets and operators in functional analysis},
   JOURNAL = {Rend. Istit. Mat. Univ. Trieste},
  FJOURNAL = {Rendiconti dell'Istituto di Matematica dell'Universit\`a{} di
              Trieste. An International Journal of Mathematics},
    VOLUME = {33},
      YEAR = {2001},
    NUMBER = {1-2},
     PAGES = {127--199},
      ISSN = {0049-4704,2464-8728},
   MRCLASS = {26A21 (28A05 28A80 54E52)},
  MRNUMBER = {1912017},
MRREVIEWER = {A.\ Precupanu},
}

@article {MR1909968,
    AUTHOR = {Appell, J. and D'Aniello, E. and V\"ath, M.},
     TITLE = {Some remarks on small sets},
   JOURNAL = {Ricerche Mat.},
  FJOURNAL = {Ricerche di Matematica},
    VOLUME = {50},
      YEAR = {2001},
    NUMBER = {2},
     PAGES = {255--274},
      ISSN = {0035-5038},
   MRCLASS = {28A05 (26A15 26A16 26A30)},
  MRNUMBER = {1909968},
MRREVIEWER = {El\.zbieta\ Wagner},
}

@article {MR4122477,
    AUTHOR = {D'Aniello, E. and Maiuriello, M.},
     TITLE = {On some generic small {C}antor spaces},
   JOURNAL = {Z. Anal. Anwend.},
  FJOURNAL = {Zeitschrift f\"ur Analysis und ihre Anwendungen. Journal of
              Analysis and its Applications},
    VOLUME = {39},
      YEAR = {2020},
    NUMBER = {3},
     PAGES = {277--288},
      ISSN = {0232-2064,1661-4534},
   MRCLASS = {28A05 (28A80 54E52)},
  MRNUMBER = {4122477},
       DOI = {10.4171/zaa/1660},
       URL = {https://doi.org/10.4171/zaa/1660},
}

@article {MR4036593,
    AUTHOR = {Balcerzak, M. and Filipczak, T. and Nowakowski, P.},
     TITLE = {Families of symmetric {C}antor sets from the category and
              measure viewpoints},
   JOURNAL = {Georgian Math. J.},
  FJOURNAL = {Georgian Mathematical Journal},
    VOLUME = {26},
      YEAR = {2019},
    NUMBER = {4},
     PAGES = {545--553},
      ISSN = {1072-947X,1572-9176},
   MRCLASS = {28A80 (03E15 28A35 54E52)},
  MRNUMBER = {4036593},
       DOI = {10.1515/gmj-2019-2039},
       URL = {https://doi.org/10.1515/gmj-2019-2039},
}

@article {MR4324482,
    AUTHOR = {Nowakowski, P.},
     TITLE = {The family of central {C}antor sets with packing dimension
              zero},
   JOURNAL = {Tatra Mt. Math. Publ.},
  FJOURNAL = {Tatra Mountains Mathematical Publications},
    VOLUME = {78},
      YEAR = {2021},
     PAGES = {1--8},
      ISSN = {1210-3195,1338-9750},
   MRCLASS = {28A80 (28A35 28A78 54E52)},
  MRNUMBER = {4324482},
}

@article {MR2058531,
    AUTHOR = {Donnini, C. and Martellotti, A.},
     TITLE = {Microscopic subsets of a {B}anach space and characterizations
              of the drop property},
   JOURNAL = {Sci. Math. Jpn.},
  FJOURNAL = {Scientiae Mathematicae Japonicae},
    VOLUME = {59},
      YEAR = {2004},
    NUMBER = {3},
     PAGES = {525--530},
      ISSN = {1346-0862,1346-0447},
   MRCLASS = {46B20 (46B50)},
  MRNUMBER = {2058531},
MRREVIEWER = {John\ R.\ Giles},
}

@PHDTHESIS{pospisil,
  author =       {Posp{\'\i}{\v s}il, M.},
  title =        {Microscopic sets and drops in {B}anach spaces},
  school =       {Charles University},
  year =         {2016},
  type =         {master thesis},
  address =      {Prague},
}

@article {MR2543905,
    AUTHOR = {Filipczak, M. and Wagner-Bojakowska, E.},
     TITLE = {Remarks on small sets on the real line},
   JOURNAL = {Tatra Mt. Math. Publ.},
  FJOURNAL = {Tatra Mountains Mathematical Publications},
    VOLUME = {42},
      YEAR = {2009},
     PAGES = {73--80},
      ISSN = {1210-3195,1338-9750},
   MRCLASS = {28A05 (28A75)},
  MRNUMBER = {2543905},
MRREVIEWER = {Miroslav\ Zelen\'y},
       DOI = {10.2478/v10127-009-0007-8},
       URL = {https://doi.org/10.2478/v10127-009-0007-8},
}
\end{document}